\numberwithin{equation}{section}
\newtheorem{thm}{Theorem}[section]
\newtheorem{cor}[thm]{Corollary}
\newtheorem{lem}[thm]{Lemma}
\theoremstyle{definition}
\newif\ifShowLabels
\newdimen\theight
\def\TeXref#1{
     \leavevmode\vadjust{\setbox0=\hbox{{\tt
            \quad\quad  {\small  \bf #1}}}%
     \theight=\ht0
     \advance\theight  by  \dp0
     \advance\theight  by  \lineskip
     \kern -\theight \vbox  to
     \theight{\rightline{\rlap{\box0}}%
      \vss}%
      }}%
\ifShowLabels \TeXref{#1} \fi}%
\ifShowLabels \TeXref{#1} \fi}%
\ifShowLabels \TeXref{#1} \fi}%
\ifShowLabels \TeXref{#1} \fi}%
\newcommand{\eqRef}[1]%
     {\ifShowLabels \TeXref{#1} \fi
      \begin{equation}\label{#1} }
\newcommand{\vsp}{\vskip 1em}
\newcommand{\vspp}{\vskip 2em}
\newcommand{\NI}{\noindent}
\newcommand{\bea}{\begin{eqnarray}}
\newcommand{\eea}{\end{eqnarray}}
\newcommand{\IR}{\mathbb{R}}
\newcommand{\bas}{\begin{align*}}
\newcommand{\eas}{\end{align*}}
\newcommand{\ba}{\begin{align}}
\newcommand{\ea}{\end{align}}
\newcommand{\be}{\begin{equation}}
\newcommand{\ee}{\end{equation}}
\newcommand{\ben}{\begin{eqnarray*}}
\newcommand{\een}{\end{eqnarray*}}
\newcommand{\lam}{\lambda}
\newcommand{\Om}{\Omega}
\newcommand{\om}{\omega}
\newcommand{\p}{\partial}
\newcommand{\al}{\alpha}
\newcommand{\bt}{\beta}
\newcommand{\Lam}{\Lambda}
\newcommand{\g}{\gamma}
\newcommand{\ve}{\varepsilon}
\newcommand{\dl}{\delta}
\newcommand{\D}{\Delta}
\newcommand{\G}{\Gamma}
\newcommand{\s}{\sigma}
\newcommand{\lamo}{\lambda_{\Om}}
\newcommand{\rh}{\rho}
\newcommand{\mP}{\mathcal{P}}
\newcommand{\mS}{\mathcal{S}}
\newcommand{\mQ}{\mathcal{Q}}
\newcommand{\tht}{\theta}
\title[Asymptotics]{A Strong Minimum principle and Large Time Asymptotics for viscosity solutions to a class of doubly nonlinear possibly degenerate parabolic equations}
\author[Bhattacharya and Marazzi]{Tilak Bhattacharya and Leonardo Marazzi}
\begin{document}

\maketitle

\begin{abstract} We study a version of the strong minimum principle, and large time asymptotics of positive viscosity solutions to classes of doubly nonlinear parabolic equations of the form 
$$ H(Du,D^2u)-u^{k-1}u_t=0,\;\;k\ge 1,\quad\mbox{in $\Om\times [0,T)$},$$ 
where $\Om\subset \IR^n$ is a bounded domain and $0<T\le \infty$. The spatial operator $H$ is homogeneous with power $k$. 
\end{abstract}

\section{\bf Introduction}

Let $\Om\subset \IR^n, \;n\ge 2$, be a bounded domain, and $\overline{\Om}$ be its closure.  For $0<T\le \infty$, define $\Om_T=\Om\times(0,T)$. If $T=\infty$, we write $\Om_\infty=\Om\times (0,\infty)$. 
Let  $P_T=(\overline{\Om}\times \{0\})\cup (\p\Om\times [0,T)$, and $P_\infty=P_T$ with $T=\infty$. These are the parabolic boundaries of
$\Om_T$ and $\Om_\infty$ respectively. 
Let $u=u(x,t)\;: \Om_T\rightarrow [0,\infty)$.  For $k\ge 1$, set
\eqRef{Gamma} 
\G_k[u]:=H(Du,D^2u)- u^{k-1}u_t,
\ee
where $H$ is an operator that satisfies certain conditions, described later in this section, and $k$ represents the homogeneity of $H$. 

We introduce notation for a more precise formulation of the questions studied in this work. The letters $x,\;y$ and $z$ will often denote points in $\IR^n$. We reserve $o$ for the origin. There will be occasions where we write $x=(x_1,x_2,\cdots, x_n)$. The notation $S^n$ is for the set of all $n\times n$ real symmetric matrices, $I$ is the identity matrix and $O$ is the zero $n\times n$ matrix. 
The letters $e$ and $\om$ will often stand for unit vectors in $\IR^n$. 

In this work, we study a version of the strong minimum principle and large time asymptotic behaviour of continuous positive viscosity solutions to the following doubly nonlinear parabolic equation
\eqRef{pde}
\G_k[u]=0,\;\;\mbox{in $\Om_T$ and $u=h$ on $P_T$,}
\ee
where $\G_k$ is as in (\ref{Gamma}) and $h=h(x,t)\in C(P_T)$. We allow $T=\infty$ in what follows.

The function $h=h(x,t),$ for $(x,t)\in P_T,$ comprises the initial and side conditions and is as given below:
\eqRef{h}
h(x,t)=\left\{ \begin{array}{ccc} h(x,0) & \forall x\in \overline{\Om},\;t=0,\\ h(x,t) & \forall (x,t)\in  \p\Om\times[0,T). \end{array}\right.
\ee
The function $h\in C(P_T)$ in the sense that 
\ben
&&\mbox{(i) } \quad\mbox{If $(x,t)\in \p\Om \times (0,T)$ and $(x,t)\rightarrow (y,0),\;y\in \p\Om$, \;\; then}\;\; \lim_{ (x,t)\rightarrow (y,0) } h(x,t)=h(y,0).\\
&&\mbox{(ii) } \quad \mbox {If $x \in \overline{\Om}$ and $x \rightarrow y,\;y\in \overline{\Om}$,\;\;then}\;\;\lim_{ (x,0)\rightarrow (y,0) } h(x,0)=h(y,0).
\een

We assume throughout that 
\eqRef{h(x,t)} 
0<\inf_{P_T} h(x,t)\le \sup_{P_T}h(x,t)<\infty.
\ee

\vsp

We describe now the conditions satisfied by $H$. These apply through out the work. 
\vsp
{\bf Condition A (Monotonicity):} Assume that $H:\IR^n\times S^n\rightarrow \IR$ is continuous. Moreover, 
we require that $H(\wp, O)=0$, for any $\wp\in \IR^n$. For any $X,\;Y\in S^n$ with $X\le Y$,
$$H(\wp,X)\le H(\wp,Y),\;\;\forall \wp\in \IR^n. \quad \Box$$
\vsp
{\bf Condition B (Homogeneity):} There is a constant $k_1\ge 0$ such that $\forall (\wp,X)\in \IR^n\times S^n$,
$$H(\tht \wp, X)=|\tht|^{k_1}H(\wp, X),\;\;\forall\; \tht\in \IR, \;\;\mbox{and}\;\;\;H(\wp, \tht X)=\tht H(\wp, X),\;\;\forall \; \tht>0.\quad \Box$$
Note that we do not assume that $H$ is odd in $X$. Also, if $k_1=0$ then $H(\wp, X)=H(X).$ 
\vsp
Set $k=k_1+1$. Clearly,
\eqRef{gamma}
H(\tht \wp, \tht X)= \tht^k H(\wp, X) \quad\mbox{where}\quad k=k_1+1\;\;\mbox{and}\;\;\tht>0.
\ee
\vsp
Let $\Lam\in\IR$ and $e\in \IR^n$ be a unit vector. Define
\bea\label{mM}
&&m(\Lam)=\min\left( \min_{|e|=1}H\left(e,I-\Lam e\otimes e\right),\; -\max_{|e|=1}H(e, \Lam e\otimes e- I)  \right),\;\;\mbox{and}    \nonumber \\
&&M(\Lam)=\max\left( \max_{|e|=1}H\left(e,I-\Lam e\otimes e\right), \;    - \min_{|e|=1}H(e, \Lam e\otimes e-I)  \right).
\eea
Clearly, $m(\Lam)\le M(\Lam)$. By Condition A, if $\Lam\le 1$ then $m(\Lam)\ge 0$, since $I-\Lam e\otimes e\ge 0$. However, if $\Lam>1$ then no definite statement can be made about $I-\Lam e\otimes e$. 
\vsp
{\bf Condition C (Coercivity):} We require that $H$ satisfy 
\eqRef{sec2.7}
\mbox{C(i)}\quad m(\Lam)>0,\;\forall \Lam<1,\quad \mbox{and}\quad \mbox{C(ii)}\quad M(\Lam)<0,\;\forall \Lam \ge \Lam_1, 
\ee
for some $\Lam_1\ge 1. \quad \Box$

 We make a simple observation. If $\Lam=0$ then C(i) implies that 
\be\label{C}
\mbox{(i)\quad $H(e,I)\ge m(0)>0$,\quad and\quad $H(e, -I)\le -m(0)<0.$}
\ee 

\vsp
One of the major origin of motivation for this work is the class of parabolic equations studied  in [\cite{ED0}: see Chap II].
In particular, we refer to equation (1.1) and the conditions in $(\text{A}_1),\;(\text{A}_2)$ and $(\text{A}_3)$ therein. The example of the parabolic equation
$$(*)\quad\text{div}(|Du|^{p-2}Du)+|Du|^p=u_t,\;p>1,$$
is included in \cite{ED0}. If we use the change of variables $v=e^u$ then we obtain the well-known doubly nonlinear parabolic equation
$$(**)\quad\text{div}(|Dv|^{p-2}Dv)=v^{p-2}v_t.$$
See Subsection 2.2 for more details. The operator $H(Du, D^2u):=\text{div}(|Du|^{p-2}Du)$ is quasilinear, $k=p-1$, and odd in second derivatives.
It is easy to see that Conditions A and B are satisfied,  if $p\ge 2$. Also,
$$H(e, I-\Lam e\otimes e)= (n+p-2)-(p-1)\Lam.$$
If $n\ge 2$, Condition C is satisfied. Thus, many of our results would hold for $(*)$, for $p\ge 2$, suitably modified by the application of the change of variable $v=e^u$.

 The monograph \cite{ED0} studies equations like $(*)$, in greater generality, and in the context of weak solutions. It contains significant results regarding local behaviour including regularity. Our context in our current work is is the setting of viscosity solutions. We study equations like $(**)$ in this context and also operators $H$ that may be fully nonlinear. Examples such as the Pucci operators, including their degenerate versions, are  instances included here.
While our focus is on equations of the type in $(**)$, we do utilize versions of the kind $(*)$ 
(in our context it would be $H(Du, D^2u+Du\otimes Du))$ to show that a version of the comparison principle holds.
In the context of viscosity solutions, this property has great utility. 
\vsp
Further examples of operators $H$ that satisfy Conditions A, B and C include, the pseudo $p-$Laplacian ($p\ge 2$), the infinity-Laplacian and the Pucci operators. See [\cite{BM4}: Section 3] and [\cite{BM5}: Section 3] for a detailed discussion.

As indicated above, doubly nonlinear parabolic equations are of great interest and there are many works that address them. The ones that are directly related to this work are \cite{BM1, BM2, BM20, BM3, BM5, TR}. The works in 
\cite{AJK, JL, Tych} are also of interest in this context. As discussed above, the work in \cite{ED0} has also close connections with this topic.

The results in this work are of two kind. The first addresses the strong minimum principle for $\G_k$. For $k=1$, we show that the results known 
for the linear case appear to hold even though $H$ may be fully nonlinear, see \cite{ED, L}. For $k>1$, however, there appears to be a departure from the linear case, as our results will show. Many of the results, known for $k=1$, fail to hold. 

In this context, it is well-known there is a close connection between the strong minimum principle and the Harnack inequality. The latter is known for many of the examples of $H$ listed above. However, we have been unable to provide a unified proof of a version that holds for the entire class of operators $\G_k$ being addressed in this work. To better appreciate the importance of Harnack's inequality, we direct the reader to the references \cite{ED0, ED, LSU, L}. The texts \cite{ED, L} address the linear case; \cite{ED0} develops techniques for studying classes of nonlinear, degenerate parabolic equations. These provide deep insights into the properties of solutions to such equations. A further expanded version of these topics can be found in \cite{EDUV}.

 The second set of results address large time asymptotic behaviour of positive solutions to (\ref{pde}) (equations like $(**)$ are included here). We provide a general result and follow it up by proving a result that applies to the case when the side condition $h$ is a constant. In the latter, the case $k=1$ appears to be different from $k>1$.  In this connection see also \cite{AJK, BM3, JL}.
\vsp
 We provide additional definitions. 
Also, from hereon, we take $k\ge 1$.
\vsp
Let $U\subset \IR^{n+1}$ be a domain. By $usc(lsc)(U)$, we mean the set of all upper semi-continuous (lower semi-continuous) functions defined on the set $U$.

We provide a definition of a viscosity solution of 
\begin{equation}\label{G_k}
\G_k[u]\equiv H(Du,D^2u)-u^{k-1}u_t=0,\quad\mbox{in $\Om_T$\quad and}\quad u=h\quad \mbox{on $P_T$}.    
\end{equation}

A function $u\in usc(\Om_T),\;u>0,$ is said to be a viscosity sub-solution of the differential equation in (\ref{G_k}) in the set $\Om_T$ (or solves $\G_k[u]\ge 0$ in $\Om_T$), if, for any $\psi$, $C^2$ in $x$ and $C^1$ in $t$, such that  $u-\psi$ has a maximum at some point $(y,t)\in \Om_T$, we have
$$H(D\psi, D^2\psi)(y,t)-u(y,t)^{k-1}\psi_t(y,t)\ge 0.$$
We say $u$ is a sub-solution of the problem in (\ref{G_k}), if $u\in usc(\Om_T\cup P_T)$, $\G_k[u]\ge 0$ in $\Om_T$, and $u\le h$ on $P_T$. 

Similarly, $u\in lsc(\Om_T),\;u>0,$ is said to be a viscosity super-solution of the differential equation in (\ref{G_k}) in $\Om_T$ (or solves $\G_k[u]\le 0$, in $\Om_T$), if, for any $\psi$, $C^2$ in $x$ and $C^1$ in $t$, such that $u-\psi$ has a minimum at some $(y,t)\in \Om_T$, we have
$$H(D\psi,D^2\psi)(y,t)-u(y,t)^{k-1} \psi_t(y,t) \le 0.$$
We say $u$ is a super-solution of the problem in (\ref{G_k}), if $u\in lsc(\Om_T\cup P_T),\;u>0,$ $\G_k[u]\le 0$ in $\Om_T$, and $u\ge h$ on $P_T$. 

A function $u\in C(\Om_T)$ is a solution of $\G_k[u]=0$ in $\Om_T$, if it is both a sub-solution and a super-solution.  Similarly, $u\in C(\Om_T\cup P_T)$ is
a solution of the problem in (\ref{G_k}), if it is both a sub-solution and a super-solution of (\ref{G_k}). The above definitions can be extended to the case $T=\infty.$
\vsp
We state next the main results. Theorems \ref{thm:1} and \ref{thm:1.0} address issues related to the strong minimum principle. Such results are well-known for many equations, see, for instance, \cite{ED, L}. Our goal is to provide a unified proof that works for a large class of equations and in the viscosity framework. 

 In order to clarify the role of the hypotheses, described earlier, the operator $H$ will be assumed to satisfy Conditions A, B and C unless otherwise mentioned.

For the rest of the work, $B_\rho(x)$ is the $\IR^n$ ball centered at $x\in \IR^n$ with radius $\rho.$
In what follows, a vector $\g\in \IR^{n+1}$, will sometimes be written as $\g=(\g_1,\cdots, \g_{n+1})=(\vec{\g}_n,\g_{n+1})$, where $\vec{\g}_n\in \IR^n$.

\begin{thm}\label{thm:1} Let $\Om\subset \IR^n$ be a bounded domain and $T>0$. Suppose that $k=1$. Assume that 
$u\in lsc(U_T),$ satisfies
$$\G_1[u]\equiv H(D^2 u)-u_t\le 0,\quad \mbox{in $\Om_T$}.$$
Set $m=\inf_{\Om_T} u$.\\

(a) {\bf Hopf Boundary principle.} Assume that $\p\Om\in C^2$. Let $(p,\tau)\in \p\Om\times(0,T)$ be such that $u(p,\tau)=m$ and $u(x,t)>m$, near $(p,\tau)$. 
Suppose that $\g\in \IR^{n+1}$ is such that $\g_{n+1}\le 0$,   $\vec{\g}_n$ is not tangential to $\p\Om$ and points towards the interior of $\Om$. Suppose that, for some $\tht_0>0$, small, 
$(p,\tau)+\tht \g \in \Om_T, \;\mbox{for every $\tht\in (0,\tht_0)$,}$
then
$$\liminf_{\tht\rightarrow 0^+}\frac{u(p+\tht \vec{\g}_n,\tau+\tht \g_{n+1})-u(p,\tau)}{\tht}>0.  $$

(b) {\bf Strong Minimum principle.} Let $(p,\tau)\in \Om_T$ be such that $u(p,\tau)>m$. Then $u(x,t)>m$ for every $(x,t)\in \Om\times (\tau,T).$
The result holds without any assumptions on the smoothness of $\p\Om$. As a consequence, we get that if $u(p,\tau)=m$ then $u(x,t)=m$ in $\Om\times (0,\tau).$
\end{thm}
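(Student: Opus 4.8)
The plan is to run everything off one family of Hopf‑type sub‑solution barriers adapted to $H$. For a base point $(x_0,t_0)$ and parameters $\alpha>0$, $\beta\ge 0$, $c\in\IR$ and $\varepsilon>0$ small, set
\[
w(x,t)=m+\varepsilon\Big(e^{-\alpha(|x-x_0|^2+\beta(t-t_0))}-c\Big).
\]
Since $D^2_x w$ is the positive multiple $2\alpha\varepsilon e^{-\alpha(\cdots)}$ of $2\alpha|x-x_0|^2\,e\otimes e-I$ (with $e=(x-x_0)/|x-x_0|$) and $w_t=-\alpha\beta\varepsilon e^{-\alpha(\cdots)}$, the positive homogeneity of Condition~B gives
\[
H(D^2_x w)-w_t=2\alpha\varepsilon\,e^{-\alpha(\cdots)}\Big(H(\Lambda\,e\otimes e-I)+\tfrac{\beta}{2}\Big),\qquad \Lambda:=2\alpha|x-x_0|^2 .
\]
So $w$ is a (strict) sub‑solution of $\Gamma_1[\,\cdot\,]\ge 0$ wherever the bracket is positive, and there are two regimes that achieve this: keep $\Lambda$ bounded below away from $0$ and use Condition~C(ii) (from $M(\Lambda)<0$ for $\Lambda\ge\Lambda_1$ one reads off $H(\Lambda e\otimes e-I)>0$) with $\beta=0$; or allow $\Lambda$ in any bounded range and take $\beta$ large, using only continuity of $H$ on a compact set. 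I would use the first regime next to a boundary point and the second for interior propagation, comparing $w$ against $u$ by the comparison principle for $\Gamma_1$ (available for this class, cf.\ the remark in the Introduction on reducing to equations of the form $(*)$, and in any case easy here since $w$ is a strict sub‑solution).

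For \textbf{part (a)} I would first use $\partial\Omega\in C^2$ to fix an interior ball $B_\rho(z)\subset\Omega$ with $\overline{B_\rho(z)}\cap\partial\Omega=\{p\}$, so $z-p=\rho\nu$ with $\nu$ the interior unit normal at $p$. On the hollow region $\mathcal{R}=(B_\rho(z)\setminus\overline{B_{\rho/2}(z)})\times(\tau-\sigma,\tau+\sigma)$, taken small so that $u>m$ on $\overline{\mathcal{R}}\setminus\{(p,\tau)\}$, I take the time‑independent barrier $w=m+\varepsilon(e^{-\alpha|x-z|^2}-e^{-\alpha\rho^2})$ with $\alpha$ large enough that $2\alpha(\rho/2)^2\ge\Lambda_1$, so $H(D^2 w)>0=w_t$ in $\mathcal{R}$. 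On the parabolic boundary $w=m\le u$ on the outer face $\{|x-z|=\rho\}$, while on the inner face and the bottom $w\le u$ once $\varepsilon$ is small, because there $u$ exceeds $m$ by a fixed amount (lower semicontinuity on a compact set). Comparison then gives $w\le u$ on $\overline{\mathcal{R}}$ with equality at $(p,\tau)$; the hypotheses on $\gamma$ put $(p,\tau)+\theta\gamma$ inside $\mathcal{R}$ for small $\theta>0$, so comparing difference quotients and letting $\theta\to0^+$ yields
\[
\liminf_{\theta\to0^+}\frac{u(p+\theta\vec{\gamma}_n,\tau+\theta\gamma_{n+1})-u(p,\tau)}{\theta}\ \ge\ 2\alpha\varepsilon\rho\,e^{-\alpha\rho^2}\,(\nu\cdot\vec{\gamma}_n)\ >\ 0,
\]
the last inequality because $\vec{\gamma}_n$ is inward and non‑tangential.

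For \textbf{part (b)} the crux is a local propagation statement: if $u(x_0,t_0)>m$ and $\overline{B_R(x_0)}\times[t_0,t_0+\eta_0]\subset\Omega_T$, then $u>m$ on a truncated paraboloid $\{\,t_0<t<t_0+\delta^*,\ |x-x_0|^2+\beta(t-t_0)<r^2\,\}$. To prove it, lower semicontinuity supplies $r,\eta,\varepsilon_0>0$ with $u\ge m+2\varepsilon_0$ on $\overline{B_r(x_0)}\times[t_0,t_0+\eta]$; pick $R>r$ with the cylinder $\overline{B_R(x_0)}\times[t_0,t_0+\eta]$ inside $\Omega_T$, set $c=e^{-\alpha r^2}$ and choose $\beta$ large so that the barrier is a sub‑solution on $B_R(x_0)\times(t_0,t_0+\eta)$; on the parabolic boundary one has $w\le m\le u$ on the lateral face and on the part of the bottom with $|x-x_0|\ge r$ (there $e^{-\alpha(\cdots)}\le e^{-\alpha r^2}=c$), and $w\le m+\varepsilon\le m+2\varepsilon_0\le u$ on the central bottom disk; comparison gives $w\le u$ on the cylinder, while $w>m$ exactly on the stated paraboloid. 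With this, an arbitrary $(x_1,t_1)\in\Omega\times(\tau,T)$ is reached from $(p,\tau)$ by a chaining argument: join $p$ to $x_1$ by a path in $\Omega$, lift it to a path in $\Omega_T$ with strictly increasing time from $\tau$ to $t_1$, and cover it by finitely many of these paraboloid neighborhoods — finiteness coming from compactness of the path together with the fact that $u$ stays uniformly above $m$ on compact subsets of the open set $\{u>m\}$, so the paraboloids keep a controlled size along the chain. Hence $u(x_1,t_1)>m$. Finally, if $u(p,\tau)=m$ while $u(y,s)>m$ for some $(y,s)\in\Omega\times(0,\tau)$, applying the propagation just proved with base point $(y,s)$ would force $u(p,\tau)>m$; so $u\equiv m$ on $\Omega\times(0,\tau)$.

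The main obstacle is the barrier construction itself. The Hessian of a Gaussian is indefinite — eigenvalue $-1$ with multiplicity $n-1$ and one large positive eigenvalue away from the center — so controlling the sign of $H$ applied to it is exactly what Conditions~B and~C provide; and the barrier must simultaneously lie below $u$ on the \emph{entire} parabolic boundary, which dictates the shape of the barrier region (a hollow cylinder in (a); a cylinder whose ``good'' central bottom disk is surrounded by a region where the barrier has been pushed at or below $m$ in (b)) so that the only part of the parabolic boundary on which one uses more than $u\ge m$ is a compact set carrying a fixed gap. Threading these two requirements at once — and, for (b), doing so with a barrier region that genuinely enlarges the set $\{u>m\}$ rather than one already contained in it — is the delicate point; the remaining ingredient is the comparison principle for $\Gamma_1$ in the viscosity setting, which the paper supplies via the change of variables to equations of the form $(*)$.
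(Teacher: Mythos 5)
Your barrier computations are correct as far as they go, but in part (a) the comparison cannot actually be run on the region you chose. With the time-independent barrier $w=m+\ve\bigl(e^{-\alpha|x-z|^2}-e^{-\alpha\rho^2}\bigr)$ on the hollow cylinder $(B_\rho(z)\setminus\overline{B_{\rho/2}(z)})\times(\tau-\sigma,\tau+\sigma)$, the bottom face is the full closed annulus at time $\tau-\sigma$, whose outer rim meets $\p\Om$ at the tangency point below $p$. On that face $w>m$ at every point with $|x-z|<\rho$, while the hypothesis gives $u>m$ only at interior points near $(p,\tau)$, with no rate: approaching $(p,\tau-\sigma)$ along the bottom, $u-m$ may vanish faster than $\rho-|x-z|$, whereas $w-m$ vanishes only linearly in $\rho-|x-z|$. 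Hence no fixed $\ve>0$ makes $w\le u$ on the whole parabolic boundary, and your appeal to ``lower semicontinuity on a compact set'' fails there because that compact set is not contained in the open set $N\cap\Om_T$ where $u>m$ is known (it touches $\p\Om$). This is exactly why the paper uses the space--time spherical shell $\rho^2/4\le r^2+(\tau-t)^2\le\rho^2$ with the barrier $e^{-a(r^2+(\tau-t)^2)}-e^{-a\rho^2}$ extended by zero: for $t<\tau$ its positivity set detaches from $\{r=\rho\}$, so the only boundary portions where the barrier is positive lie compactly inside $N\cap\Om_T$, where a uniform bound $u\ge\ve$ is available (the price is the extra $-(\tau-t)$ term, absorbed by taking $\rho$ small). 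Your sketch is missing this device, and the obvious patch (a time factor vanishing at the bottom) destroys the subsolution property there.

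In part (b) the local paraboloid lemma is sound, but the chaining — which is where the theorem lives — is circular as written. The uniformity you invoke (``$u$ stays uniformly above $m$ on compact subsets of the open set $\{u>m\}$'') cannot be used, since the lifted path is not known in advance to lie in $\{u>m\}$; that is the conclusion. If you iterate using only what the previous paraboloid gives, the next bottom disk must sit inside the previous time slice, whose radius is strictly smaller than $r$, so the radii and the time advances $\sim r_i^2/\beta$ shrink geometrically and the chain stalls (it cannot even sweep all of $(\tau,T)$ above $p$, let alone cross $\Om$); if instead you re-extract by lower semicontinuity at each new base point, the step sizes are positive but uncontrolled, so finiteness of the cover is unjustified, and the natural connectedness argument also fails because $u$ is only lsc (the ``good'' set of path parameters need not be closed). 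The paper's slanted cylinders are engineered precisely to remove this: the cross-section keeps the fixed radius $\rho$ while the axis advances in both space and time, the time step $\D=K_1\rho^2$ and admissible slant $|\vec{\g}_n|\le K_2\dl/\rho$ are fixed a priori by the path's geometry, a single lsc extraction at $(p,\tau)$ suffices, and each level loses only the fixed factor $\bt\ge 1/2$ in the propagated bound $u\ge\hat{\psi}_\ell$ (with the $\D_s<\D$ adjustment handling the lsc issue at the top of each cylinder). Some quantitative device of this kind — a barrier whose ``good'' set does not shrink as it advances, with sizes fixed before the iteration starts — is the ingredient your chaining is missing.
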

There are no restrictions on the sign of $u$. A proof is presented in Section 3. The proof of part (b) is achieved by using slanted cylinders, see also \cite{L}. 
\vsp
We address $k>1$ in the next result.

\begin{thm}\label{thm:1.0} Let $\Om\subset \IR^n$ be a bounded domain and $T>0$. Suppose that $k>1$.
Assume that $u\in lsc(\Om_T),\;u>0,$ is super-solution, i.e., 
$$\G_k[u]\equiv  H(Du, D^2u)-u^{k-1}u_t\le 0\quad\mbox{in $\Om_T$}.$$
Set $m=\inf_{\Om_T} u$. The following hold.

(a) Suppose that $m>0$. If for some $(p,\tau)\in \Om_T$, $u(p,\tau)>m$ then there is a $\rho>0$ such that $u>m$ in the cylinder $B_\rho(p)\times [\tau, T)$. As a consequence, 
if $u(p,\tau)=m$ then $u(p,s)=m$ for all $0<s<\tau$. 

(b) Suppose that $m=0$ and $(p,\tau)\in \Om_T$ is such that $u(p,\tau)=0$. Assume that $u\in C(\Om_T)$. Then there is a sequence of points $\{(x_\ell, t_\ell)\}_{\ell=1}^\infty \subset \Om_T$, such that
$t_\ell<\tau$, $u(x_\ell, t_\ell)=0$ and $(x_\ell, t_\ell)\rightarrow (p,\tau).$ 
\end{thm}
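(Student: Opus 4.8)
\medskip
\noindent\textbf{Proof plan.}

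Both statements are proved by comparison with explicit barriers. Part~(a) is, in effect, a rescaled version of the $k=1$ strong minimum principle, while part~(b) is obtained by contradiction, using the minimum principle for supersolutions together with a degenerate barrier. For part~(a) we may assume $m$ is attained (otherwise $u>m$ throughout $\Om_T$), and recall $m>0$. Fix $(p,\tau)$ with $u(p,\tau)>m$. By lower semicontinuity of $u$ at $(p,\tau)$ there are $\rho_1>0$, $\eta>0$ with $u\ge m+\eta$ on $\overline{B_{\rho_1}(p)}\times\{\tau\}$, and we pick $\rho_2>\rho_1$ with $\overline{B_{\rho_2}(p)}\subset\Om$. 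Given an arbitrary $\tau'\in(\tau,T)$, I would construct on $Q:=B_{\rho_2}(p)\times(\tau,\tau')$ a smooth $w$ with: (i) $m\le w\le m+\eta$ on $\overline Q$ and $w_t\le0$ in $Q$; (ii) $w=m$ on $\partial B_{\rho_2}(p)\times[\tau,\tau']$, while $w(\cdot,\tau)$ is a smooth bump equal to $m$ outside $B_{\rho_1}(p)$ and $\le m+\eta$ inside; (iii) $w>m$ on $B_{\rho_1/2}(p)\times(\tau,\tau']$; (iv) $H(Dw,D^2w)-m^{k-1}w_t\ge0$ in $Q$. Since $w_t\le0$ and $w^{k-1}\ge m^{k-1}>0$, property~(iv) gives $H(Dw,D^2w)-w^{k-1}w_t\ge H(Dw,D^2w)-m^{k-1}w_t\ge0$, so $w$ is a viscosity subsolution of $\G_k[w]\ge0$ in $Q$; by~(ii) and $u\ge m$ on $\Om_T$ (and $u\ge m+\eta$ on $\overline{B_{\rho_1}(p)}\times\{\tau\}$) we get $w\le u$ on $\partial_pQ$. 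The comparison principle established earlier in the paper then yields $w\le u$ in $Q$, hence $u>m$ on $B_{\rho_1/2}(p)\times(\tau,\tau']$. Letting $\tau'\uparrow T$, and using $u\ge m+\eta>m$ on $B_{\rho_1/2}(p)\times\{\tau\}$, gives $u>m$ on $B_\rho(p)\times[\tau,T)$ with $\rho:=\rho_1/2$. The stated consequence follows: if $u(p,\tau)=m$ but $u(p,s)>m$ for some $s\in(0,\tau)$, applying this with base point $(p,s)$ forces $u>m$ on a cylinder containing $(p,\tau)$, a contradiction, so $u(p,s)=m$ for all $s\in(0,\tau)$.

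The barrier $w$ in~(a) is precisely of the type used for the $k=1$ strong minimum principle: one may take $w=m+\varphi$, where $\varphi\ge0$ is a subsolution of the rescaled equation $H(D\varphi,D^2\varphi)-m^{k-1}\varphi_t\ge0$ with bump initial data, constructed from Conditions A, B, C (which supply $m(\Lam)>0$ for $\Lam<1$ and $M(\Lam)<0$ for large $\Lam$), or one adapts the slanted-cylinder barrier used in Theorem~\ref{thm:1}(b); the verification of~(iv) uses the homogeneity $H(\theta\wp,\theta X)=\theta^kH(\wp,X)$ of~(\ref{gamma}). I expect the construction and verification of this barrier, together with the check $w\le u$ on $\partial_pQ$, to be the only technical point in part~(a); once those are in place, comparison does the rest.

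For part~(b), argue by contradiction: if the conclusion fails there is $\rho_0>0$ with $\overline{B_{\rho_0}(p)}\subset\Om$ and $0<\tau-\rho_0$ such that $u>0$ on $B_{\rho_0}(p)\times(\tau-\rho_0,\tau)$. For $r\in(0,\rho_0)$ consider the compact box $\overline{B_r(p)}\times[\tau-r,\tau]\subset\Om_T$. Since $u(p,\tau)=0=\inf_{\Om_T}u$ and $u\ge0$, the minimum of $u$ over this box equals $0$, and by the minimum principle for supersolutions it is attained on the parabolic boundary, i.e.\ on the bottom $\overline{B_r(p)}\times\{\tau-r\}$ or on the lateral face $\partial B_r(p)\times[\tau-r,\tau]$. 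If it were attained at a point with $t<\tau$, that point would lie in $B_{\rho_0}(p)\times(\tau-\rho_0,\tau)$, where $u>0$, a contradiction; hence it is attained only on $\partial B_r(p)\times\{\tau\}$, so $u(\cdot,\tau)$ must vanish somewhere on $\partial B_r(p)$, for every small $r$. To close the argument one must produce a genuine zero of $u$ at a time strictly below $\tau$ and close to $p$, equivalently derive a contradiction with this forced vanishing. The intended route is a barrier from below on $B_{\rho_0}(p)\times(\tau-\rho_0,\tau)$: using the positive lower bound $\mu:=\min_{\overline{B_{\rho_0/2}(p)}\times\{\tau-\rho_0/2\}}u>0$ on a lower cross-section, one constructs a positive viscosity subsolution $w$ of $\G_k$ on a subcylinder, with $w\le u$ on its parabolic boundary and $\liminf_{(x,t)\to(p,\tau)}w(x,t)>0$; comparison then gives $0<\liminf_{(x,t)\to(p,\tau)}w\le u(p,\tau)=0$, the desired contradiction.

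The construction of this last barrier is the main obstacle of the theorem, and it is exactly here that $k>1$ matters. Since $u^{k-1}$ degenerates as $u\to0$, the barrier must be tuned to that degeneracy: $w$ should vanish faster than linearly near the lateral boundary, so that $D^2w$ is large and positive there (making $H(Dw,D^2w)\ge0$ via Condition A) while $w^{k-1}w_t$ is negligible, yet remain bounded below by a fixed positive constant near $(p,\tau)$ over the whole time interval. In essence one is asserting that for $k>1$ a positive supersolution cannot attain the value $0$ "cleanly" from above, in sharp contrast with $k=1$, where Theorem~\ref{thm:1} forbids zeros from appearing at all.
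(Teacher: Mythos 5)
Your part (a) is essentially the paper's argument. The paper also starts from lower semicontinuity ($u\ge m+\ve$ on $B_\rho(p)\times\{\tau\}$) and compares with a barrier exactly of your type: $\psi(x,t)=m+\ve(\rho^2-r^2)^2\eta(t)$, $\eta(t)=(\tau+2\dl-t)/(2\dl)$, $\dl=T-\tau$, which is time-decreasing and $>m$ strictly inside the cylinder on all of $[\tau,T)$. The subsolution check is the only computation: by homogeneity $H(D\psi,D^2\psi)=(4\ve\phi\eta)^k r^{k-1}H\bigl(e,\tfrac{2r^2}{\phi}e\otimes e-I\bigr)$; one splits $R\le r<\rho$ (where Condition C(ii) with $\Lam\ge\Lam_1$ makes the $H$-term nonnegative) from $0\le r\le R$, where the $H$-term is $O(\ve^k)$ while $-\psi^{k-1}\psi_t\ge \ve m^{k-1}\phi(R)^2/(2\dl)$, so a small $\ve$ wins. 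This is precisely your "freeze $u^{k-1}$ at $m^{k-1}$" reduction (indeed $k>1$ makes the $\ve^k$ versus $\ve$ comparison favorable); note only that you cannot literally import the slanted-cylinder barrier of Theorem \ref{thm:1}(b), since there $H=H(X)$ carries no gradient dependence, whereas here the degree-$(k-1)$ gradient homogeneity must enter as above. So (a) is correct in outline, with the explicit barrier left unwritten.

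Part (b) is where there is a genuine gap, and it is the step you yourself defer. Your intermediate deduction already shows that, under your (correct, literal) negation, $u(\cdot,\tau)$ vanishes somewhere on $\p B_r(p)$ for every small $r$; hence on the lateral boundary of any subcylinder the supersolution $u$ becomes arbitrarily small as $t\to\tau^-$ near those points, so any comparison function with $w\le u$ on the lateral face must degenerate there too. But once $w$ has no positive constant floor, both $H(Dw,D^2w)$ and $w^{k-1}w_t$ scale like the $k$-th power of the amplitude of $w$, so the "take the amplitude small" mechanism that powers part (a) is gone: you cannot by rescaling a fixed profile arrange $w\le u$ on a boundary where $u\to 0$ while keeping $\liminf_{(x,t)\to(p,\tau)}w>0$. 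This is exactly the difficulty the paper acknowledges (the quotient comparison is unclear where sub- and supersolution are both small), and its proof sidesteps it by contradicting a different hypothesis: it assumes $(p,\tau)$ is an isolated zero, i.e. $u>0$ on $\overline{C}\setminus\{(p,\tau)\}$ with $C=B_\rho(p)\times(\tau-\dl,\tau)$, so $u\ge\nu>0$ on the full compact parabolic boundary (including the top edge of the lateral face); then the barrier $\tfrac{\nu}{2}+\ve(\rho^2-r^2)^2\tfrac{\tau-t}{2\dl}$ has the constant floor $\nu/2$, is a subsolution for small $\ve$ exactly as in (a), and comparison plus continuity gives $u(p,\tau)\ge\nu/2$, a contradiction. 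The difference between your negation and the paper's isolated-zero hypothesis is precisely the scenario of zeros accumulating only along the level $t=\tau$, which is the scenario your plan runs into and cannot dispatch; as written, your argument for (b) does not close, and to follow the paper you should argue from the isolated-zero hypothesis rather than promise the degenerate barrier.
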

A proof appears in Section 4. An example shows that the result in Part (a) can not be improved. Also, the Hopf boundary principle may not hold if $k>1$. 
\vsp
The next two results address large time asymptotic behaviour. See \cite{AJK, BM3, JL}. 

\begin{thm}\label{thm:1.1} Let  $\Om\subset \IR^n,\;n\ge 2,$ be a bounded domain, and $h\in C(P_\infty)$ satisfy \eqref{h} and \eqref{h(x,t)}. Suppose that $k\ge 1$. 

(a)  Let $u\in lsc(\Om_\infty\cup P_\infty),\;u> 0,$ be a super-solution to \eqref{pde}, i.e., $\G_k[u]\le 0$. Assume that $u=h$ on $\p\Om\times[T,\infty)$.
If $\nu_{\inf}=\lim_{t\rightarrow \infty} \left( \inf_{\p\Om\times[t, \infty)} h\right)$ then 
$$\lim_{t\rightarrow \infty}   \left(\inf_{\overline{\Om}\times[t, \infty)} u \right)=\nu_{\inf}.$$

(b)  Let $u\in usc(\Om_\infty\times P_\infty),\;u>0,$ be a sub-solution to \eqref{pde}, i.e., $\G_k[u]\ge 0$. Assume that $u=h$ on $\p\Om\times[T,\infty)$. If
$\mu_{\sup}=\lim_{t\rightarrow \infty} \left( \sup_{\p\Om\times[t, \infty)} h\right)$ then 
$$\lim_{t\rightarrow \infty}   \left(\sup_{\overline{\Om}\times[t, \infty)} u \right)=\mu_{\sup}.$$ 
\end{thm}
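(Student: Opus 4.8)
The plan is to prove part (a); part (b) follows by an entirely symmetric argument (replacing infima by suprema, super-solutions by sub-solutions, and reversing the relevant inequalities). The strategy is to sandwich $\inf_{\overline{\Om}\times[t,\infty)}u$ between $\nu_{\inf}$ and quantities that converge to $\nu_{\inf}$, using a barrier/comparison argument. First I would establish the easy direction: since $u=h$ on $\p\Om\times[T,\infty)$, for every $t\ge T$ we have $\inf_{\overline{\Om}\times[t,\infty)}u\le \inf_{\p\Om\times[t,\infty)}h$, and letting $t\to\infty$ gives $\limsup_{t\to\infty}\bigl(\inf_{\overline{\Om}\times[t,\infty)}u\bigr)\le \nu_{\inf}$. (I should check that the one-sided limits defining $\nu_{\inf}$ exist: the maps $t\mapsto \inf_{\p\Om\times[t,\infty)}h$ are monotone nondecreasing and bounded above by $\sup_{P_T}h<\infty$ by \eqref{h(x,t)}, so the limit exists and is finite; likewise $\nu_{\inf}\ge \inf_{P_T}h>0$.)

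The substantive direction is the lower bound: $\liminf_{t\to\infty}\bigl(\inf_{\overline{\Om}\times[t,\infty)}u\bigr)\ge \nu_{\inf}$. Fix $\ve>0$. By definition of $\nu_{\inf}$ there is $t_0\ge T$ with $h\ge \nu_{\inf}-\ve$ on $\p\Om\times[t_0,\infty)$. The idea is to compare $u$ on the cylinder $\Om\times[t_0,\infty)$ with a subsolution that is eventually close to the constant $\nu_{\inf}-\ve$. Here I would exploit the homogeneity of $H$: by Condition B, constants are solutions (indeed $H(D c,D^2 c)=H(\vec 0,O)=0=c^{k-1}c_t$), so any constant $c>0$ with $c\le \inf_{\overline\Om} u(\cdot,t_0)$ and $c\le \nu_{\inf}-\ve$ would serve as a lower barrier via a comparison principle on $\Om\times[t_0,\infty)$ — but the obstacle is that $\inf_{\overline\Om}u(\cdot,t_0)$ need not be close to $\nu_{\inf}$. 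To overcome this I would instead use a time-dependent lower barrier $w(t)$ of the form $w(t)=\nu_{\inf}-\ve-\delta e^{-\lambda(t-t_0)}$ (or, if $k>1$, the solution of the ODE obtained by plugging $w(x,t)=w(t)$ into $\G_k$, namely $-w^{k-1}w'\le 0$, i.e. $w$ nondecreasing, with $w<\nu_{\inf}-\ve$ — any increasing function below the boundary data works since the spatial operator vanishes on it), chosen so that $w(t_0)\le \inf_{\overline\Om}u(\cdot,t_0)$, $w(t)\le \nu_{\inf}-\ve\le h$ on $\p\Om\times[t_0,\infty)$, and $w$ is a subsolution of $\G_k$. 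Applying the comparison principle for \eqref{pde} (whose validity in this viscosity setting is asserted in the Introduction via the auxiliary operators $H(Du,D^2u+Du\otimes Du)$) on $\Om\times[t_0,\infty)$ gives $u\ge w$ there, hence $\inf_{\overline\Om\times[t,\infty)}u\ge \inf_{s\ge t}w(s)\to \nu_{\inf}-\ve$ as $t\to\infty$. Since $\ve>0$ is arbitrary, combining with the upper bound yields the claim.

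The main obstacle I anticipate is twofold: first, justifying the comparison principle on the \emph{unbounded} time interval $[t_0,\infty)$ — this is typically handled by running the comparison on $\Om\times[t_0,T']$ for each finite $T'$ and noting the barrier's ordering with $u$ on the bottom $\{t=t_0\}$ and lateral $\p\Om\times[t_0,T']$ faces is independent of $T'$, then letting $T'\to\infty$; and second, in the degenerate case one must ensure the comparison principle genuinely applies to $\G_k$ for the full class of $H$ under Conditions A, B, C — I would cite the comparison result proved (or invoked) earlier in the paper rather than reprove it. A minor technical point is that one needs $u$ bounded below away from $0$ on $\overline\Om\times\{t_0\}$ to pick the barrier's initial value; this is automatic from $u>0$ and lower semicontinuity on the compact set $\overline\Om\times\{t_0\}$ together with $u=h>0$ on the boundary, giving $\inf_{\overline\Om}u(\cdot,t_0)>0$. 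With these in hand the argument for (a) is complete, and (b) is obtained by the mirror-image construction using an upper barrier decreasing to $\mu_{\sup}+\ve$.
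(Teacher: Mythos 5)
Your easy direction (the monotonicity of $t\mapsto \inf_{\p\Om\times[t,\infty)}h$ and the trivial bound $\inf_{\overline{\Om}\times[t,\infty)}u\le \inf_{\p\Om\times[t,\infty)}h$) is fine, but the substantive direction contains a genuine gap: your proposed lower barrier has the wrong sign to be usable in the comparison principle. If $w=w(t)$ is spatially constant, then $H(Dw,D^2w)=H(\vec 0,O)=0$, so $\G_k[w]=-w^{k-1}w'(t)$. Your barrier $w(t)=\nu_{\inf}-\ve-\delta e^{-\lambda(t-t_0)}$ is \emph{increasing} in $t$, hence $\G_k[w]\le 0$: it is a \emph{super}-solution, not a sub-solution (you even derive the inequality $-w^{k-1}w'\le 0$ yourself, which is exactly the super-solution inequality). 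The comparison principle (Theorem \ref{sec2.3} / Lemma \ref{min3}) compares a sub-solution against a super-solution; it gives nothing when both $u$ and $w$ are super-solutions, so you cannot conclude $u\ge w$. A purely time-dependent sub-solution must be non-increasing, and such a barrier can never climb from the possibly small interior value $\inf_{\overline\Om}u(\cdot,t_0)$ up to $\nu_{\inf}-\ve$; so the obstruction is structural, not a fixable technicality. The mirror-image argument for part (b) fails in the same way: a decreasing spatially constant barrier is a sub-solution, whereas an upper barrier for the sub-solution $u$ must be a super-solution.

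This is precisely why the paper's proof uses spatially non-constant barriers. For part (a) it takes $\xi(x,t)=D e^{Er^2}\,e^{at}/(e^{at}+F)$ with $r=|x-z|$, $z\notin\overline\Om$: the convex increasing radial profile makes $H(D\xi,D^2\xi)=(2E\xi)^k r^{k-1}H(\om,I+2Er^2\,\om\otimes\om)\ge (2E\xi)^k\mathcal{R}^{k-1}\min_{|\om|=1}H(\om,I)>0$ by the coercivity Condition C(i), and this strictly positive spatial term absorbs the positive time derivative provided $0<a<(2E)^k\mathcal{R}^{k-1}\min_{|\om|=1}H(\om,I)$. Thus $\xi$ is a genuine sub-solution that increases in time toward the level $\nu_{\inf}-\ve$ (after tuning $D$, $E$, $F$ so that $\xi\le u$ at $t=T_0$ and $\xi\le h$ on the lateral boundary), and comparison then yields $u\ge\xi$, from which the lower bound follows after letting $t\to\infty$ and $\mathcal{R}\to\infty$. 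Part (b) uses the analogous super-solution $\zeta(x,t)=De^{-Er^2}(1+Fe^{-at})$ with $2E(\mathcal{R}+\mathcal{D})^2<1$ so that the spatial term is strictly negative and dominates the time term. To repair your proposal you would need to replace your constant-in-space barriers by such coercivity-driven spatial profiles; your remarks on running comparison on finite time slabs and on $\inf_{\overline\Om}u(\cdot,t_0)>0$ are sensible but do not address this central point.
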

See Section 5 for a proof.
\vsp
The next result provides a slight refinement in the special case where $h\equiv$ constant. 
\begin{thm}\label{thm:1.2} 
Let $\Om$ be a bounded domain that satisfies an uniform outer ball condition. Suppose that, for some $\nu\in \IR$,  $h=\nu,$ on $\p\Om\times[T,\infty)$ for some $T\ge 0.$ 

For parts (a) and (b), assume that $\nu>0$ and $k>1$.  Assume that the sub(super)-solution $u=\nu$ on $\p\Om\times [T,\infty).$ The following holds for any $\al<1/(k-1)$.

(a) If $u>0$ is a subsolution then  $\displaystyle{   \lim_{t\rightarrow \infty} t^{\al} \left( \sup_{\Om\times [t,\infty)} u-\nu\right)=0.}$

(b) If $u>0$ is a supersolution then $\displaystyle{ \lim_{t\rightarrow \infty} t^{\al} \left( \nu-\inf_{\Om\times[t, \infty)} u\right)=0.}$
\vsp
(c) Suppose that either $\nu=0$ and $k\ge 1$, or $\nu>0$ and $k=1$. Let $\lamo$ be the first eigenvalue of $H$ on $\Om$.  If $u\ge 0$ is a sub-solution then
$$\lim_{t\rightarrow \infty} \left( \frac{\sup_{x\in\Om} \log u(x,t) }{t} \right)\le -\lamo.$$
\end{thm}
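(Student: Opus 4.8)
\textbf{Proof proposal for Theorem \ref{thm:1.2}.}

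The plan is to treat the three parts by constructing explicit supersolutions (for (a) and (c)) and subsolutions (for (b)) that serve as barriers, and then applying the comparison principle alluded to in the introduction. For part (a), fix $\al<1/(k-1)$ and pick $\beta$ with $\al<\beta<1/(k-1)$. I would look for a spatially-independent supersolution of the form $w(t)=\nu+C(1+t)^{-\beta}$ on $\Om\times[T_0,\infty)$ for a suitable large $T_0$ and large constant $C$. Since $w$ depends only on $t$, we have $Dw=0$, $D^2w=O$, so Condition A gives $H(Dw,D^2w)=H(0,O)=0$, and the equation $\G_k[w]\le 0$ reduces to $-w^{k-1}w_t\le 0$, i.e. $w_t\ge 0$; but $w$ is decreasing, so this fails. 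Hence a purely-decaying barrier of this naive form is a \emph{subsolution}, not a supersolution — which is exactly what is needed to bound $u$ from above only if we reverse roles. The correct approach for (a): since $u$ is a subsolution and we want an upper bound approaching $\nu$, I instead compare $u$ with the \emph{constant} $\nu$ shifted by a decaying supersolution. The key realization is that because $H(0,O)=0$, the function $\z(t)=\nu+C(1+t-T)^{-\beta}$ satisfies $\G_k[\z]=-\z^{k-1}\z_t = \z^{k-1}\,C\beta(1+t-T)^{-\beta-1}>0$, so $\z$ is a \emph{supersolution}; and on the parabolic boundary of $\Om\times(T,\infty)$ we have $u=\nu\le \z$ on $\p\Om\times[T,\infty)$ and, at $t=T$, $u(\cdot,T)\le \sup_{\Om\times[T,\infty)}u =: M_T<\infty$, so choosing $C=(M_T-\nu)_+$ makes $u(\cdot,T)\le \z(T)$. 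By the comparison principle $u\le \z$ on $\Om\times[T,\infty)$, hence $\sup_{\Om\times[t,\infty)}u-\nu\le C(1+t-T)^{-\beta}$, and multiplying by $t^\al$ and letting $t\to\infty$ gives the limit $0$ since $\al<\beta$. Part (b) is the mirror image: $\eta(t)=\nu-C(1+t-T)^{-\beta}$ is a subsolution (for $t$ large enough that $\eta>0$), it lies below $u$ on the parabolic boundary after choosing $C$ from $\inf_{\Om\times[T,\infty)}u$, and comparison plus the same scaling finishes it.

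For part (c), the two sub-cases require genuinely different barriers. When $\nu=0$ and $k\ge 1$: let $\phi>0$ be the first eigenfunction of $H$ on $\Om$, so that $H(D\phi,D^2\phi)=-\lamo\phi^k$ in $\Om$ (this is the natural eigenvalue problem for a $k$-homogeneous $H$; $\phi$ vanishes on $\p\Om$), normalized so that $\phi\le$ some constant. I would try the ansatz $v(x,t)=\big(A\,e^{-\mu t}\big)\,\phi(x)$ and compute: $Dv = Ae^{-\mu t}D\phi$, $D^2v=Ae^{-\mu t}D^2\phi$, so by Condition B (homogeneity of degree $k$ in the pair), $H(Dv,D^2v)=(Ae^{-\mu t})^k H(D\phi,D^2\phi)=-(Ae^{-\mu t})^k\lamo\phi^k$; while $v^{k-1}v_t = (Ae^{-\mu t}\phi)^{k-1}\cdot(-\mu A e^{-\mu t}\phi)=-\mu (Ae^{-\mu t})^k\phi^k$. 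Thus $\G_k[v]= (Ae^{-\mu t})^k\phi^k(\mu-\lamo)$, so $v$ is a supersolution precisely when $\mu\le\lamo$. Taking $\mu=\lamo$ (or $\mu=\lamo-\ve$ and letting $\ve\to0$ at the end) and $A$ large enough that $Ae^{-\lamo T}\phi(x)\ge u(x,T)$ on $\overline\Om$ — possible since $u$ is bounded and, handling the boundary where $\phi\to 0$, one uses that $u=0$ on $\p\Om$ here (the $\nu=0$ case) so the inequality is not violated near $\p\Om$ by a Hopf-type lower bound on $\phi$ — comparison gives $u(x,t)\le Ae^{-\lamo t}\phi(x)$, whence $\log u(x,t)\le \log A+\log(\sup\phi)-\lamo t$, and dividing by $t$ and taking $\limsup$ yields $\le -\lamo$. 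When $\nu>0$ and $k=1$: the equation is $H(D^2u)-u_t\le0$ (linear-homogeneity, $H$ independent of $Du$), and the same computation with $v=Ae^{-\lamo t}\phi(x)$ works even more cleanly since $k=1$ removes the need to track powers; here one bounds $u$ above on $\p\Om$ by the constant $\nu$, but since we are comparing the \emph{logarithmic} decay rate, one subtracts off the steady state: apply the argument to $u-\nu$? That is not a subsolution in general because $H$ is not translation-invariant — so instead, for $k=1$, I would note $u$ is bounded above by, say, $2\nu$ for large $t$ (by part of Theorem \ref{thm:1.1}(b) with $\mu_{\sup}=\nu$), and then... actually the cleanest route is to observe the claim only asserts an upper bound on the growth rate of $\log u$, and since $u$ stays bounded, $\limsup \frac{\log u}{t}\le 0$ trivially; the content $\le-\lamo$ must therefore use that $u\le$ (eigenfunction barrier) which forces decay to $0$ — but that contradicts $u\to\nu>0$ unless the side condition forces $u$ small, so I suspect the intended reading is that $\lamo$ may be negative or the statement is about the rate of approach; I would reconcile this by comparing $u$ with $\nu + Ae^{-\lamo t}\phi$ using that $H(D^2(\nu+w))=H(D^2 w)$ since $H$ depends only on $D^2u$ and $\nu$ is a constant — here translation-invariance \emph{does} hold because $H=H(X)$ in this case (Condition B with $k_1=0$), so $u-\nu$ \emph{is} a subsolution of $H(D^2\cdot)-(\cdot)_t\le0$ wait it needs the coefficient $u^{k-1}=u^0=1$, yes — so $u-\nu$ is a genuine subsolution, it is $\le0$ on $\p\Om\times[T,\infty)$, and comparison with $Ae^{-\lamo t}\phi$ gives $u-\nu\le Ae^{-\lamo t}\phi$, but that's an upper bound on $u$, not on $\log u$; one then needs $u-\nu$ to become negative, and iterating or using the two-sided version gives $|u-\nu|\lesssim e^{-\lamo t}$, from which $\log u = \log(\nu+O(e^{-\lamo t}))$, which does \emph{not} go to $-\infty$ — so the statement $\limsup\frac{\log u}{t}\le -\lamo$ combined with $u\to\nu>0$ forces $\lamo\le0$, i.e. the statement is vacuous/automatic unless $\lamo\le 0$; I will present the eigenfunction-barrier comparison and let the inequality follow, flagging that when $\nu>0,k=1$ it says $\lamo\le0$ or $u$ must itself decay.

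The main obstacle, and the step I expect to require the most care, is \textbf{the behaviour of the barriers near $\p\Om$ and the precise form of the comparison principle invoked}. The eigenfunction $\phi$ vanishes on $\p\Om$, so the inequality "$u\le Ae^{-\mu t}\phi$" is most delicate exactly where both sides are small; in the $\nu=0$ case one needs a quantitative lower bound on $\phi$ near $\p\Om$ (Hopf lemma for $H$) together with the rate at which $u\to 0$ at the boundary (which is where the assumed uniform outer ball condition and the operator's coercivity, Condition C, enter, via a boundary barrier for $u$). In the $\nu>0$ cases of (a) and (b), the barriers $\nu\pm C(1+t-T)^{-\beta}$ are bounded away from $\p\Om$-issues since they are spatially constant, so there the only subtlety is checking these are admissible viscosity super/sub-solutions — which is immediate since they are smooth and $H(0,O)=0$ — and verifying the comparison principle applies on the unbounded time interval $[T,\infty)$, which I would handle by the standard device of comparing on $\Om\times[T,T']$ for each finite $T'$ and noting the bound is uniform in $T'$. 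I would also need to confirm that "$u=\nu$ on $\p\Om\times[T,\infty)$" is really being used (it is: it pins down the boundary data for the comparison), and that $\sup_{\Om\times[T,\infty)}u<\infty$, which for a subsolution with bounded boundary data follows from an earlier comparison/maximum principle established in the paper.
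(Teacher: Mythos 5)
Your parts (a) and (b) contain a fatal sign error, and it is exactly the error your own first computation warned you about. For the decreasing barrier $\z(t)=\nu+C(1+t-T)^{-\beta}$ you correctly find $\G_k[\z]=-\z^{k-1}\z_t>0$; but $\G_k[\z]\ge 0$ is the definition of a \emph{subsolution} in this paper, not a supersolution, so $\z$ cannot be compared from above against the subsolution $u$. Likewise $\eta(t)=\nu-C(1+t-T)^{-\beta}$ has $\eta_t>0$, hence $\G_k[\eta]<0$: it is a supersolution, not the subsolution you need in (b). Since $H(0,O)=0$, \emph{no} purely time-dependent decreasing function can be a supersolution and no increasing one a subsolution, so the whole strategy of spatially constant barriers collapses; a spatial profile is indispensable. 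The paper's proof constructs $\psi$ with $H(D\psi,D^2\psi)=-1$ in $\Om$, $\psi=1$ on $\p\Om$ (Lemma \ref{thm1.2-3}, proved via Perron's method, and this is where the uniform outer ball condition — which your argument for (a),(b) never uses, a warning sign — enters), and takes $\phi(x,t)=\nu+\ve\psi(x)(T_1/t)^{1/(k-1)}$: the strictly negative elliptic term $-(\ve\tau)^k$ is what absorbs the positive contribution $-\phi^{k-1}\phi_t$ once $T_1$ is chosen of size $\ve^{1-k}$, and the exponent $1/(k-1)$ is forced by this balance rather than freely chosen. With that barrier and Theorem \ref{sec2.3} the decay $\sup_\Om u(\cdot,t)-\nu\le K t^{-1/(k-1)}$ follows, giving (a); (b) is the mirror image with $H(D\psi,D^2\psi)=+1$, $\psi\le -1$.

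Part (c) also has a gap as written. You compare $u$ with $A e^{-\lamo t}\phi$ where $\phi$ is a genuine first eigenfunction vanishing on $\p\Om$; but in this paper's framework $\lamo$ is only defined (Appendix A.2, following \cite{BM4}) as the supremum of those $\lam$ for which $H(Dv,D^2v)+\lam v^k=0$ has a \emph{positive} solution with positive constant boundary data — existence of an eigenfunction at $\lam=\lamo$ is not available, the quotient comparison (Lemma \ref{min3}) requires the comparison function to be strictly positive up to the boundary, and your claim that $u(\cdot,T_0)\le A\phi$ near $\p\Om$ would require a boundary decay rate for $u$ (a barrier/Hopf argument for $u$ itself) that you do not supply. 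The paper avoids all of this: it takes $\lam<\lamo$, solves $H(D\psi_\lam,D^2\psi_\lam)+\lam\psi_\lam^k=0$ with boundary value $\mathcal{M}=\sup_{\overline\Om}u(\cdot,T_0)$, so $\psi_\lam\ge\mathcal{M}>0$ and the boundary comparison is immediate, then lets $\lam\uparrow\lamo$ at the end. Finally, for the case $\nu>0$, $k=1$ you trail off without a proof; the paper's intent (see the Comment preceding its proof of (c)) is that translation invariance for $k=1$ lets one pass to $v=u-\nu$, which is a subsolution vanishing on the lateral boundary, and then run the $\nu=0$ argument — your observation that the statement, read literally for $u$ itself with $u\to\nu>0$, is in tension with the conclusion is a fair criticism of the statement's phrasing, but flagging it does not substitute for the argument.
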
 
The result does not appear to hold for super-solutions. For more details and a proof of the theorem, see Section 6.
\vsp
In this work, we do not address existence issues for the parabolic problems (\eqref{pde}). See [\cite{BM5}: Theorems 1.2 and 1.3] for a discussion of such issues, see also \cite{BM1, BM2}.
\vsp
\section{Preliminaries}
\vsp
We present some elementary calculations that will be useful in the work. Included here are also a few results related to a comparison principle for parabolic equations.

For the rest of the work, $B_\rho(x)$ is the $\IR^n$ ball centered at $x\in \IR^n$ with radius $\rho.$

\subsection{Radial functions} 
Let $z\in \IR^n$ and $r=|x-z|$. Suppose that $v(x)=v(r),\;r\ge 0,$ is $C^2$ in $r>0$. Set $e=(x-z)/r$, in $r>0$. Then for $x\ne z$, 
\eqRef{sec2.5}
H(Dv, D^2v)=H\left(v^{\prime}(r)e, \frac{v^{\prime}(r)}{r}\left(I-e\otimes e\right)+v^{\prime\prime}(r) e\otimes e \right),
\ee
where $I$ is the $n\times n$ identity matrix. 

If $v(r)=r^\al,\;\al>0,$ then
$$H(Dv, D^2v)= \al^k r^{\al k-(k+1)}H(e, I+(\al-2)e\otimes e).$$

\subsection{Change of variable formula} See Lemma 2.3 in [\cite{BM5}: Section 3] for a more general statement. This implies that if $u\in usc(lsc)(\Om_T),\;u>0$, satisfies
$$H(Du, D^2u)-u^{k-1}u_t\ge (\le) 0 \quad \mbox{in $\Om$,}$$ 
and $w=\log u$, then $w\in usc(lsc)(\Om_T)$ and  
$$H(Dw, D^2w+ Dw\otimes Dw)-w_t \ge (\le) 0\quad \mbox{in $\Om_T$.} $$
These are in the sense of viscosity. The elliptic counterpart appears in [\cite{BM4}: Section 5]. See also \cite{BM1, BM2}.
\vsp
\subsection{Parabolic Comparisons} We state versions of the comparison principle used in this work.  Note that in all the results stated here, $\Om\subset \IR^n$ is a bounded domain and $0<T<\infty$. However, many of these continue to hold for $T=\infty$, by letting $T\rightarrow \infty$.

We begin with  a well known general principle about sub-solutions that we state without proof.

\begin{lem}\label{sub} Suppose that $H$ satisfies Condition A. Let $u\in usc(\Om_T\cup P_T),\;u\ge 0,$ solve $\G_k[u]\ge 0$ in $\Om_T$, and $\ve \ge 0.$  Then
the function $u_\ve=\max\{u, \ve\}$ solves 
$$\G_k[u_\ve]\ge 0\quad\mbox{in $\Om_T$}.$$ 
Similarly, if $v$ is super-solution i.e., $\G_k[v]\le 0$ then $v_\ve=\min\{v, \ve\}$ is a super-solution.
\end{lem}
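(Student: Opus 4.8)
The plan is to verify the viscosity inequalities for $u_\ve=\max\{u,\ve\}$ directly from the definition, treating the two regimes $u_\ve=\ve$ and $u_\ve=u$ separately at the touching point. Let $\psi$ be a test function, $C^2$ in $x$ and $C^1$ in $t$, such that $u_\ve-\psi$ has a (local) maximum at some $(y,t)\in\Om_T$. I would consider two cases according to the value $u_\ve(y,t)$.

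\textbf{Case 1: $u_\ve(y,t)=u(y,t)\ge \ve$.} Since $u\le u_\ve$ everywhere and they agree at $(y,t)$, the function $u-\psi$ also has a local maximum at $(y,t)$: indeed $(u-\psi)(x,s)\le (u_\ve-\psi)(x,s)\le (u_\ve-\psi)(y,t)=(u-\psi)(y,t)$ near $(y,t)$. Because $u$ solves $\G_k[u]\ge 0$, we get $H(D\psi,D^2\psi)(y,t)-u(y,t)^{k-1}\psi_t(y,t)\ge 0$, and since $u_\ve(y,t)=u(y,t)$ this is exactly the required sub-solution inequality for $u_\ve$.

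\textbf{Case 2: $u_\ve(y,t)=\ve>u(y,t)$.} Here $u_\ve$ equals the constant $\ve$ in a neighborhood of $(y,t)$ — this uses upper semicontinuity of $u$: since $u(y,t)<\ve$, there is a neighborhood on which $u<\ve$, hence $u_\ve\equiv\ve$ there. Then $u_\ve-\psi=\ve-\psi$ has a local max at $(y,t)$, forcing $D\psi(y,t)=0$ and $D^2\psi(y,t)\le O$; also $\psi_t(y,t)=0$ since $\ve-\psi$ has a max in $t$ as well and $\psi$ is $C^1$ in $t$. By Condition A (monotonicity together with $H(\wp,O)=0$), $H(D\psi,D^2\psi)(y,t)=H(0,D^2\psi(y,t))\le H(0,O)=0$... which has the wrong sign. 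I would instead argue: a constant function is trivially a (classical, hence viscosity) solution of $\G_k[\ve]=H(0,O)-\ve^{k-1}\cdot 0=0$, so in particular $\ve$ is a sub-solution; since $u_\ve$ coincides with the constant $\ve$ near $(y,t)$ and sub-solution is a local property, $u_\ve$ satisfies $\G_k\ge 0$ at $(y,t)$ in the viscosity sense. Concretely, with $D\psi(y,t)=0$, $\psi_t(y,t)=0$, we need $H(0,D^2\psi(y,t))\ge 0$; but $D^2\psi(y,t)\le O$ only gives $H(0,D^2\psi(y,t))\le 0$. The resolution is that with $u_\ve$ locally constant one may perturb $\psi$: the correct statement is that $u_\ve-\psi$ having a max at $(y,t)$ with $u_\ve$ locally constant is impossible to combine with a strict inequality, so one takes test functions with $D^2\psi(y,t)=O$ after a standard reduction (replace $\psi$ by $\psi+|x-y|^2\cdot o(1)$); then $H=0\ge 0$ holds. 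The super-solution statement for $v_\ve=\min\{v,\ve\}$ is entirely symmetric, using lower semicontinuity of $v$ in the analogue of Case 2.

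\textbf{Main obstacle.} The only delicate point is Case 2: one must handle the test function inequality when $u_\ve$ is locally constant, where the naive application of Condition A produces an inequality of the wrong sign. The clean fix is to observe that at a maximum of $u_\ve-\psi$ with $u_\ve$ locally equal to $\ve$, the second-order jet of $\psi$ at $(y,t)$ can be taken with $D^2\psi(y,t)=O$ (any such $\psi$ can be decreased to second order without destroying the maximum), whence $H(0,O)=0\ge 0$; equivalently, invoke that a constant is a classical solution and sub/super-solution is a local notion. Everything else — the reduction of $u-\psi$ having a max in Case 1, and the symmetric treatment of $v_\ve$ — is routine and requires no computation. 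I would also remark that the hypothesis $u\ge 0$ (resp. $v>0$) is only needed so that $u_\ve,v_\ve$ remain in the admissible class of positive functions when $\ve>0$; for $\ve=0$ the statement is vacuous for $u$ and trivial for $v$.
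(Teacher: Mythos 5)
The paper states this lemma without proof (it is invoked as a ``well known general principle''), so the only question is whether your argument is sound. Your overall structure — split according to whether $u_\ve(y,t)=u(y,t)$ or $u_\ve(y,t)=\ve>u(y,t)$, with upper semicontinuity giving $u_\ve\equiv\ve$ near the touching point in the second case — is the standard and correct route, and your Case 1 is fine. But Case 2 contains a sign error that then leads you to manufacture a nonexistent obstacle and an invalid repair. If $u_\ve-\psi$ has a maximum at $(y,t)$ and $u_\ve\equiv\ve$ near $(y,t)$, then $\ve-\psi(x,s)\le \ve-\psi(y,t)$ nearby, i.e.\ $\psi$ has a \emph{local minimum} at $(y,t)$. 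Hence $D\psi(y,t)=0$, $\psi_t(y,t)=0$ and $D^2\psi(y,t)\ge O$, not $\le O$ as you wrote. Condition A then gives directly $H(D\psi,D^2\psi)(y,t)=H(0,D^2\psi(y,t))\ge H(0,O)=0=u_\ve(y,t)^{k-1}\psi_t(y,t)$, which is exactly the required inequality; there is no wrong sign to resolve. This is also precisely the computation that makes your fallback remark ``a constant is a classical, hence viscosity, sub-solution'' true, whereas your proposed mechanism — perturbing $\psi$ so that one may assume $D^2\psi(y,t)=O$ — is not a legitimate reduction (a test function touching from above at a minimum of $\psi$ cannot in general be replaced by one with vanishing Hessian, and no such step is needed).

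With that sign corrected, the rest goes through: in the supersolution half, at a minimum of $v_\ve-\psi$ with $v_\ve\equiv\ve$ locally (using lower semicontinuity of $v$), $\psi$ has a local maximum, so $D^2\psi(y,t)\le O$, $\psi_t(y,t)=0$, and Condition A gives $H(0,D^2\psi(y,t))\le 0$, which is the supersolution inequality. So the proof is salvageable by a one-line fix, but as written the key step of Case 2 is wrong and the attempted patch would not survive scrutiny.
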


Suppose that $F:\IR^+\times  \IR^n\times S^n\rightarrow\IR$ is continuous and satisfies
\bea\label{sec2.1}
\mbox{$F(t, \wp, X)\le F(t, \wp, Y),\;\forall(t,\wp)\in  (0,T)\times \IR^n$, with $X\le Y$,}
\eea
\vsp

\begin{lem}\label{sec2.2}{(Comparison principle)} Let $F$ be as in (\ref{sec2.1}), and
$f:(0,T)\rightarrow (0,T)$ be continuous.  Suppose that $u\in usc(\Om_T\cup P_T)$ and $v\in lsc(\Om_T\cup P_T)$ satisfy
$$F(t,Du, D^2u+Du\otimes Du)- f(t)u_t\ge 0\;\;\mbox{and}\;\;F(t, Dv, D^2v+Dv\otimes Dv)-  f(t)v_t\le 0$$
 in $\Om_T.$ If $\sup_{P_T}v<\infty$ and $u\le v$ on $P_T$ then $u\le v$ in $\Om_T$.
\end{lem}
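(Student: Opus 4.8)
The plan is to run the standard Crandall--Ishii doubling-of-variables argument for parabolic equations, adapted to the fact that the elliptic part depends on $D^2u + Du\otimes Du$ rather than $D^2u$ alone, and that the time derivative is weighted by $f(t)$. First I would argue by contradiction: suppose $\sup_{\Om_T}(u-v) =: \eta > 0$. Since $u \le v$ on $P_T$ and both are semicontinuous on the compact set $\overline{\Om_T}$, the supremum is attained at an interior parabolic point, bounded away from $P_T$. To force the maximum into the interior of the time interval as well (away from $t = T$), I would first replace $v$ by $v + \sigma/(T-t)$ for small $\sigma > 0$; this is still a supersolution (the extra term only makes $v_t$ larger, hence $-f(t)v_t$ more negative, using $f > 0$ — note $f:(0,T)\to(0,T)$ so $f>0$ on the interior), it blows up as $t\to T$, and it converges to $v$ uniformly on compact subsets, so for $\sigma$ small the positive supremum of $u - v - \sigma/(T-t)$ persists and is attained at some $(\hat x,\hat t)$ with $\hat t < T$.

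Next I would double variables: for $\alpha, \beta > 0$ consider
$$\Phi_{\alpha,\beta}(x,y,t) = u(x,t) - v(y,t) - \frac{\sigma}{T-t} - \frac{\alpha}{2}|x-y|^2 - \frac{\beta}{2}(|x-\hat x|^2 + |y-\hat x|^2),$$
and let $(x_\alpha, y_\alpha, t_\alpha)$ be a maximizer over $\overline{\Om_T}\times\overline{\Om_T}\times[0,T]$. By the usual lemma on penalization, as $\alpha\to\infty$ (with $\beta$ fixed) one has $\alpha|x_\alpha - y_\alpha|^2 \to 0$, $x_\alpha, y_\alpha \to \hat x$, $t_\alpha \to \hat t$, and the maximizer lies in $\Om_T$ with $t_\alpha\in(0,T)$ for $\alpha$ large. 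The parabolic theorem on sums (Crandall--Ishii--Lions) then produces $b_1, b_2\in\IR$ with $b_1 - b_2 = -\sigma/(T-t_\alpha)^2 \le 0$ and symmetric matrices $X, Y$ with $X \le Y$ (after the $\beta$-perturbation, $X \le Y + O(\beta)$, and the off-diagonal block estimate) such that $(b_1 + \text{(lower order in }\beta), \, \alpha(x_\alpha - y_\alpha) + \beta(x_\alpha - \hat x), \, X)$ is a parabolic subjet of $u$ and $(b_2 - \text{(l.o.)}, \, \alpha(x_\alpha - y_\alpha) - \beta(y_\alpha - \hat x), \, Y)$ a parabolic superjet of $v$. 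Writing $p_\alpha = \alpha(x_\alpha - y_\alpha)$, feeding these into the sub/supersolution inequalities gives
$$F\bigl(t_\alpha, p_\alpha + \beta(x_\alpha-\hat x),\, X + (p_\alpha+\beta(x_\alpha-\hat x))\otimes(p_\alpha+\beta(x_\alpha-\hat x))\bigr) - f(t_\alpha) b_1 \ge 0$$
and the reverse $\le 0$ inequality for $v$ with $Y$ and $p_\alpha - \beta(y_\alpha-\hat x)$. Subtracting, using $f(t_\alpha) > 0$ and $b_1 \le b_2$ (so $-f(t_\alpha)b_1 \le -f(t_\alpha)b_2$, i.e. the time terms have a favorable sign), one is left with
$$F(t_\alpha, q_1, X + q_1\otimes q_1) \ge F(t_\alpha, q_2, Y + q_2\otimes q_2) + \text{(nonneg.)},$$
where $q_1 = p_\alpha + \beta(x_\alpha-\hat x)$, $q_2 = p_\alpha - \beta(y_\alpha-\hat x)$.

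Now I would let $\alpha\to\infty$, then $\beta\to 0$. The key point: $X \le Y$ (up to the $\beta$-error) implies $X + q_1\otimes q_1 \le Y + q_2\otimes q_2 + o(1)$ once $q_1 - q_2 = \beta(x_\alpha - \hat x) + \beta(y_\alpha-\hat x) = O(\beta) \to 0$ and $q_1\otimes q_1 - q_2\otimes q_2 \to 0$, and $p_\alpha$ stays bounded (this boundedness uses $\alpha|x_\alpha-y_\alpha|^2\to 0$ together with the fact that, the equation being degenerate elliptic without strict monotonicity, one needs $p_\alpha$ controlled — here it follows from the penalization lemma on a bounded domain). Then monotonicity \eqref{sec2.1} of $F$ in its matrix argument, together with continuity of $F$ and of $f$, forces the inequality $F(\hat t, q, Z_1) \ge F(\hat t, q, Z_2)$ with $Z_1 \le Z_2$ to degenerate — more precisely the sequence of inequalities collapses to $0 \ge 0$ plus the discarded nonnegative time term being forced to $0$, contradicting nothing directly; the actual contradiction comes from the term $\sigma/(T - t_\alpha)^2 > 0$, which survives: $-f(t_\alpha)b_1 + f(t_\alpha)b_2 = f(t_\alpha)\sigma/(T-t_\alpha)^2$ is bounded below by a positive constant near $\hat t$, while the $F$-difference tends to $\le 0$. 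This is the sought contradiction, so $\eta \le 0$, i.e. $u \le v$ in $\Om_T$.

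The main obstacle I anticipate is the degeneracy: because $F$ is only degenerate-elliptic (Condition A gives monotonicity in $X$ but no strict ellipticity, and $H$ need not be uniformly elliptic), I cannot extract a strictly negative quantity from $\mathrm{tr}(X - Y)$; the entire strictness of the argument must be carried by the $\sigma/(T-t)$ regularization in the time variable, and I must check carefully that passing $\alpha\to\infty$ does not destroy the lower bound $f(t_\alpha)\sigma/(T-t_\alpha)^2 \ge c(\sigma) > 0$ near $t = \hat t < T$ (which holds since $f$ is continuous and positive on $(0,T)$ and $t_\alpha\to\hat t$). A secondary technical point is handling the quadratic terms $Du\otimes Du$: since $q_1, q_2$ are equal in the limit and the matrices $q_i\otimes q_i$ differ by $O(\beta)$ uniformly in $\alpha$ (once $p_\alpha$ is bounded), continuity of $F$ absorbs this harmlessly — but the bound on $p_\alpha$ must be secured first, which on a bounded domain follows from the standard estimate $|p_\alpha| = \alpha|x_\alpha - y_\alpha| \le C$ coming from comparing $\Phi$ at the maximizer with its value at $x = y = \hat x$.
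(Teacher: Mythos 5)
Your overall strategy -- doubling of variables with the $\sigma/(T-t)$ regularization carrying all of the strictness, since $F$ is merely degenerate elliptic and the equation has no zeroth--order term -- is the right one, and it is essentially the scheme behind the proof the paper cites ([\cite{BM5}: Lemma 4.1], following \cite{CIL}). However, two steps fail as written. First, the sign of the time components is backwards: at a maximum of $u(x,t)-v(y,t)-\sigma/(T-t)-\cdots$ the parabolic theorem on sums gives $b_1-b_2=+\sigma/(T-t_\alpha)^2>0$ (heuristically $u_t-v_t=\partial_t\phi$), not $\le 0$. This matters, because subtracting the viscosity inequalities yields $F_1-F_2\ge f(t_\alpha)(b_1-b_2)$; with your stated sign the right--hand side is negative and no contradiction results, whereas with the correct sign the positive term $f(t_\alpha)\sigma/(T-t_\alpha)^2$ does contradict $F_1-F_2\le 0$. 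Second, and more substantively, your claimed bound $|p_\alpha|=\alpha|x_\alpha-y_\alpha|\le C$ does not follow from comparing $\Phi$ at the maximizer with its value at $x=y=\hat x$: that comparison only gives $\alpha|x_\alpha-y_\alpha|^2\le C$, i.e.\ $|p_\alpha|\le\sqrt{C\alpha}$, which is unbounded. Your argument genuinely needs such a bound, because the $\beta$-localization desynchronizes the gradient slots ($q_1\ne q_2$) and makes $q_1\otimes q_1-q_2\otimes q_2=O(\beta|p_\alpha|)$, and since the lemma assumes only continuity of $F$ (no modulus in $\wp$ uniform for large $\wp$, cf.\ \eqref{sec2.1}), plain continuity cannot absorb these discrepancies along an unbounded sequence of gradients; so the passage $\alpha\to\infty$, $\beta\to0$ is not justified under the stated hypotheses.

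Both defects disappear if you drop the superfluous $\beta$-penalization: since $F$ has no explicit $x$-dependence there is no need to pin the maximizer. With the test function $\sigma/(T-t)+\tfrac{\alpha}{2}|x-y|^2$ alone one has $D_x\phi=-D_y\phi=\alpha(x_\alpha-y_\alpha)=p_\alpha$, so the \emph{same} gradient $p_\alpha$ enters both viscosity inequalities, the matrix arguments are $X+p_\alpha\otimes p_\alpha$ and $Y+p_\alpha\otimes p_\alpha$, and the theorem on sums gives $X\le Y$; then \eqref{sec2.1} alone yields $F_1\le F_2$ at each fixed large $\alpha$ -- no boundedness of $p_\alpha$, no continuity in $\wp$, and no limit in $\alpha$ are needed -- which contradicts $F_1-F_2\ge f(t_\alpha)\sigma/(T-t_\alpha)^2>0$ outright (equivalently, keep the regularization in the equation: $v+\sigma/(T-t)$ is a strict supersolution because $f>0$). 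One further technicality: $u$ is only upper semicontinuous on $\Om_T\cup P_T$, so compactness up to $t=T$ is not available and $u$ need not be bounded above near $t=T$; run the argument on $\Om_{T'}$ for $T'<T$ (whose parabolic boundary lies inside $P_T$) and let $T'\to T$.
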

See [\cite{BM5}: Lemma 4.1, Section 4]. See also \cite{CIL}, for a more general result. We apply the above to obtain:

\begin{thm}\label{sec2.3}{(Comparison principle)} Let $H$ satisfy Conditions A and B. 
Suppose that $u\in usc(\Om_T\cup P_T),\;u>0,$ and $v\in lsc(\Om_T\cup P_T),\;v>0,$ satisfy
$$\G_k(u)\ge 0,\;\;\mbox{and}\;\;\G_k(v)\le 0,\;\;\mbox{in $\Om_T$}. $$

Let $k\ge 1$, then the following quotient type comparison result holds in $\Om_T$:
$$\frac{u}{v}\le \sup_{P_T} \left(\frac{u}{v}\right).$$

Additionally, if $k=1$ then $u-v\le \sup_{P_T}(u-v)$. This holds without any sign restrictions.
\end{thm}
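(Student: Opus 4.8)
The plan is to deduce Theorem \ref{sec2.3} from Lemma \ref{sec2.2} via the change of variable formula of Subsection 2.2. First I would treat the quotient statement for general $k\ge 1$. Let $w=\log u$ and $\zeta=\log v$; by Subsection 2.2, $w\in usc(\Om_T\cup P_T)$ satisfies $H(Dw,D^2w+Dw\otimes Dw)-w_t\ge 0$ and $\zeta\in lsc(\Om_T\cup P_T)$ satisfies $H(D\zeta,D^2\zeta+D\zeta\otimes D\zeta)-\zeta_t\le 0$, both in $\Om_T$. These are precisely the hypotheses of Lemma \ref{sec2.2} with $F(t,\wp,X)=H(\wp,X)$ (which satisfies \eqref{sec2.1} by Condition A, independently of $t$) and $f\equiv 1$. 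Since $u,v>0$ and $\sup_{P_T}v<\infty$, and using \eqref{h(x,t)}-type bounds to see $\sup_{P_T}\zeta<\infty$ — or more simply just the hypothesis $v>0$ together with the standing assumption that the relevant suprema are finite — I need $w\le \zeta$ on $P_T$ to conclude $w\le\zeta$ in $\Om_T$, i.e. $u\le v$. To get the quotient bound, set $c=\sup_{P_T}(u/v)$; if $c=\infty$ there is nothing to prove, so assume $c<\infty$. Then $cv$ is still a super-solution: by Condition B, $H$ is positively $1$-homogeneous in $X$ and the term $(cv)^{k-1}(cv)_t=c^k v^{k-1}v_t$ scales the same way, so $\G_k[cv]=c^k\G_k[v]\le 0$. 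Replacing $v$ by $cv$, we have $u\le cv$ on $P_T$, hence by the previous paragraph $u\le cv$ in $\Om_T$, which is the desired inequality $u/v\le \sup_{P_T}(u/v)$.

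For the additive statement when $k=1$, the equation is $H(Du,D^2u)-u_t=0$ and $H$ is independent of the gradient entry up to homogeneity; here I would not pass to logarithms but instead note that $\G_1$ is translation-invariant in $u$: if $v$ is a super-solution and $a\in\IR$ then $v+a$ is also a super-solution, since $D(v+a)=Dv$, $D^2(v+a)=D^2v$, $(v+a)_t=v_t$. Take $a=\sup_{P_T}(u-v)$, assumed finite; then $u\le v+a$ on $P_T$. To conclude $u\le v+a$ in $\Om_T$ I again apply Lemma \ref{sec2.2}, but now directly to $u$ and $v+a$ with $F(t,\wp,X)=H(\wp,X-\wp\otimes\wp)$, so that $F(t,Du,D^2u+Du\otimes Du)=H(Du,D^2u)$; one checks $F$ still satisfies \eqref{sec2.1} because $X\le Y$ implies $X-\wp\otimes\wp\le Y-\wp\otimes\wp$, and Condition A applies. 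Since no sign restriction entered, this covers the stated generality.

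The main obstacle I anticipate is bookkeeping around the hypotheses of Lemma \ref{sec2.2} rather than any deep estimate: specifically, verifying the finiteness conditions ($\sup_{P_T}v<\infty$, and after the logarithmic substitution $\sup_{P_T}\zeta<\infty$, which needs $v$ bounded on $P_T$) and confirming that the change of variable formula as quoted applies on $\Om_T\cup P_T$ and not merely on $\Om_T$ — the statement in Subsection 2.2 is phrased for $\Om_T$, so I would need the boundary regularity of $\log u$, $\log v$ to descend from that of $u,v$, which is immediate since $\log$ is continuous and $u,v>0$. A secondary point to handle carefully is the degenerate case $c=0$ or the sign of $a$, but since $u>0$ we have $c>0$, and when $k=1$ the argument is insensitive to the sign of $a$. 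Everything else is a direct citation of Lemma \ref{sec2.2} and the scaling identity \eqref{gamma}.
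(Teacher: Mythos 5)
Your proposal is correct and takes essentially the same route as the paper: the paper also passes to $\log u$, $\log v$ via the change of variable of Subsection 2.2, invokes Lemma \ref{sec2.2} (with the detailed proof cited from \cite{BM5}), and for $k=1$ applies Lemma \ref{sec2.2} directly; your scaling of $v$ by $c=\sup_{P_T}(u/v)$ is just the exponentiated form of the additive constant the paper absorbs into the conclusion $\log u-\log v\le \sup_{P_T}(\log u-\log v)$. The finiteness bookkeeping you flag (boundedness of $v$ on $P_T$ needed for Lemma \ref{sec2.2}) is likewise left implicit in the paper, so it is not a gap relative to the paper's own argument.
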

A proof can be found in [\cite{BM5}: Theorem 4.3, Section 4] (in Theorem 4.3, take $\phi(t)=e^t$). The functions $\phi=\log u$ and $\psi=\log v$ satisfy the equations of the kind in Lemma \ref{sec2.2}. It follows that
$\phi-\psi\le \sup_{P_T}(\phi-\psi)$. The conclusion of Theorem \ref{sec2.3} follows. See \cite{BM1, BM2} for a related version. If $k=1$, Lemma \ref{sec2.2} may be applied directly to prove the claim. $\Box$

\vsp
We now present a version that is a slight extension of Theorem \ref{sec2.3}. 
\begin{lem}\label{min3}
 Let $u\in usc(\Om_T\cup P_T)$ and $v\in lsc(\Om_T\cup P_T)$. Assume $u\ge 0$ and $v>0$ in $\Om_T\cup P_T.$ Suppose that
$$\G_k[u]\ge 0,\quad\mbox{and}\quad \G_k[v]\le 0 \;\;\;\mbox{in $\Om_T$}.$$
If $v>0$ on $U_T$ then
$u/v \le \max_{P_T} \left( u/v\right).$ In particular, if $u=0$ on $P_T$, then $u\equiv 0$ in $\Om_T$.
\end{lem}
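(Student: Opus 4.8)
The plan is to reduce Lemma~\ref{min3} to the already-proved comparison principle Theorem~\ref{sec2.3} by a truncation argument, using Lemma~\ref{sub} to handle the fact that $u$ is only assumed nonnegative rather than strictly positive. First, fix $\ve>0$ and set $u_\ve=\max\{u,\ve\}$. By Lemma~\ref{sub}, $u_\ve\in usc(\Om_T\cup P_T)$, $u_\ve>0$, and $\G_k[u_\ve]\ge 0$ in $\Om_T$. Since $v>0$ on $\Om_T\cup P_T$ (and in particular $\sup_{P_T}(u_\ve/v)$ is a legitimate finite quantity once we also control $u$ from above on $P_T$; if $u$ is not a priori bounded on $P_T$ one works on the compact sets $\overline{\Om}\times[0,T']$ with $T'<T$ and lets $T'\to T$ at the end), Theorem~\ref{sec2.3} applies to the pair $(u_\ve,v)$ and gives
$$\frac{u_\ve}{v}\le \sup_{P_T}\left(\frac{u_\ve}{v}\right)\quad\mbox{in }\Om_T.$$

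Next I would pass to the limit $\ve\to 0^+$. Pointwise, $u_\ve(x,t)\downarrow \max\{u(x,t),0\}=u(x,t)$ as $\ve\to 0$ since $u\ge 0$, so the left side converges to $u/v$ in $\Om_T$. For the right side, $\sup_{P_T}(u_\ve/v)=\sup_{P_T}\big(\max\{u,\ve\}/v\big)$. On the portion of $P_T$ where $u\ge \ve$ this equals $u/v$, and on the portion where $u<\ve$ it is at most $\ve/\inf_{P_T}v$. Using $0<\inf_{P_T}v$ (a consequence of $v>0$ together with, if needed, restriction to compact parabolic boundary pieces), we get
$$\sup_{P_T}\left(\frac{u_\ve}{v}\right)\le \max\left\{\sup_{P_T}\frac{u}{v},\ \frac{\ve}{\inf_{P_T}v}\right\}\xrightarrow[\ve\to 0^+]{}\sup_{P_T}\frac{u}{v}.$$
Combining, $u/v\le \sup_{P_T}(u/v)$ in $\Om_T$, which is the first assertion. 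For the particular case, if $u=0$ on $P_T$ then $\sup_{P_T}(u/v)=0$, hence $u\le 0$ in $\Om_T$; since $u\ge 0$ by hypothesis, $u\equiv 0$ in $\Om_T$.

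The main obstacle, and the only genuinely delicate point, is the legitimacy of forming $\sup_{P_T}(u_\ve/v)$ and applying Theorem~\ref{sec2.3}: that theorem as stated presumes the relevant boundary supremum is finite, which for $T=\infty$ or for $u$ merely $usc$ (hence possibly unbounded above on $P_T$) is not automatic. I expect to circumvent this exactly as indicated, by first proving the inequality on each $\Om_{T'}=\Om\times(0,T')$ with $\overline{\Om}\times[0,T']$ compact — where $u_\ve$ is bounded above and $v$ is bounded below by a positive constant, so Theorem~\ref{sec2.3} applies verbatim — noting that $\sup_{P_{T'}}(u_\ve/v)\le \sup_{P_T}(u_\ve/v)$, and then letting $T'\to T$. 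Everything else (the monotone convergence $u_\ve\downarrow u$, the split of the boundary supremum, and the final sign argument) is routine.
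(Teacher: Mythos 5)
Your proof is correct and takes essentially the same route as the paper: truncate $u$ to $u_\ve=\max\{u,\ve\}$ via Lemma \ref{sub}, apply the quotient comparison of Theorem \ref{sec2.3} to the pair $(u_\ve,v)$, split the boundary supremum over $\{u\le \ve\}\cap P_T$ and $\{u>\ve\}\cap P_T$, and conclude by taking $\ve$ small (resp.\ $\ve\to 0$ when the boundary supremum vanishes). Your extra step of exhausting by compact slabs $\overline{\Om}\times[0,T']$ to ensure $\inf v>0$ and finiteness of the boundary quantities is a refinement of a point the paper leaves implicit, not a different argument.
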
 
\begin{proof} If $u>0$ on $P_T$ then the conclusion follows from Theorem \ref{sec2.3}.  Suppose that $u\ge 0$ on $\Om_T\cup P_T.$ Thus,
\be\label{min3=0}
0\le \max_{P_T}(u/v) <\infty.
\ee

 For a fixed, small $\ve>0$, set $u_\ve=\max\{u, \ve\}$. By Lemma \ref{sub}, $u_\ve$ is a sub-solution, $u_\ve\ge \ve$, and, hence, by Theorem \ref{sec2.3} and (\ref{min3-0}),
$$\frac{u}{v}\le \frac{u_\ve}{v}\le \sup_{P_T} \frac{u_\ve}{v}=\max\left\{ \sup_{ \{u\le \ve\}\cap P_T} \frac{\ve }{v},\; \sup_{\{u>\ve\}\cap P_T}\frac{u}{v}  \right\}\le\max\left\{ \sup_{ \{u\le \ve\}\cap P_T} \frac{\ve }{v},\; \sup_{P_T}\frac{u}{v}  \right\}.$$ 
If $\sup_{P_T}(u/v)>0$, we take $\ve$ small enough to conclude the result. If not, we let $\ve\rightarrow 0$ to conclude that $u=0$ in $\Om_T$.

\end{proof}

\begin{cor}\label{min3-0} Let $\bar{u}\in usc(\Om_T\cup P_T)$ and $\bar{v}\in lsc(\Om_T\cup P_T),\;\bar{v}>-\infty.$ Assume that $\inf_{\Om_T}\bar{u}>-\infty$ with, possibly, $\inf_{U_T}\bar{u}=-\infty$. If
$$H(D\bar{u}, D^2\bar{u}+D\bar{u}\otimes \bar{u})-\bar{u}_t\le 0\;\;\mbox{and}\;\;H(D\bar{v}, D^2\bar{v}+D\bar{v}\otimes \bar{v})-\bar{v}_t\ge 0,\;\;\mbox{in $\Om_T$,}$$ 
then, $\bar{u}-\bar{v}\le \max_{P_T}(\bar{u}-\bar{v}).$
\end{cor}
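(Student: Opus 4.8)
The plan is to reduce Corollary \ref{min3-0} to Lemma \ref{min3} by an exponential change of variables, exactly the device used to pass between equations of type $(*)$ and $(**)$ in Subsection 2.2. Set $u=e^{\bar u}$ and $v=e^{\bar v}$. Since $\inf_{\Om_T}\bar u>-\infty$ we have $u>0$ in $\Om_T$, and $u$ is bounded below there; on $\Om_T\cup P_T$ we still have $u\ge 0$ (with $u$ possibly $0$ somewhere on $U_T$, corresponding to $\bar u=-\infty$). Likewise $\bar v>-\infty$ gives $v>0$ on $\Om_T\cup P_T$. The change of variable formula of Subsection 2.2 (applied in reverse: a viscosity inequality for $H(D\bar u,D^2\bar u+D\bar u\otimes D\bar u)-\bar u_t$ transfers to one for $H(Du,D^2u)-u^{k-1}u_t$ with $w=\log u$, so here $\bar u=\log u$) shows that $u$ is a viscosity super-solution $\G_k[u]\le 0$ and $v$ a viscosity sub-solution $\G_k[v]\ge 0$ in $\Om_T$.

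Now apply Lemma \ref{min3} with the roles reversed — i.e. with $v$ in the role of the sub-solution and $u$ in the role of the (strictly positive) super-solution — to obtain
$$\frac{v}{u}\le \max_{P_T}\left(\frac{v}{u}\right),\quad\text{equivalently}\quad \frac{e^{\bar v}}{e^{\bar u}}\le \max_{P_T} \frac{e^{\bar v}}{e^{\bar u}}=e^{\max_{P_T}(\bar v-\bar u)}.$$
Taking logarithms of both sides and rearranging gives $\bar u-\bar v\ge -\max_{P_T}(\bar v-\bar u)=\min_{P_T}(\bar u-\bar v)$, which is the wrong direction; to get the asserted bound one instead applies Lemma \ref{min3} with $u=e^{\bar u}$ as the sub-solution... except $e^{\bar u}$ is the super-solution. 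The correct route is therefore to use the $k=1$ additive form: after the change of variables one checks that $\bar u,\bar v$ themselves satisfy equations of the type in Lemma \ref{sec2.2} with $f\equiv 1$ (the hypothesis written in the statement is precisely this), and then Lemma \ref{sec2.2}, applied with $\bar u$ the sub-solution and $\bar v$ the super-solution provided $\sup_{P_T}\bar v<\infty$, yields $\bar u-\bar v\le \sup_{P_T}(\bar u-\bar v)=\max_{P_T}(\bar u-\bar v)$ directly. The only gap is that $\sup_{P_T}\bar v$ might be $+\infty$; but $\bar u-\bar v\le \max_{P_T}(\bar u-\bar v)$ is vacuous wherever the right side is $+\infty$, and where $\max_{P_T}(\bar u-\bar v)<\infty$ one may truncate $\bar v$ from below by $\bar u$'s infimum minus a constant, or equivalently work with $v_\ve=\min\{e^{\bar v},\text{large}\}$ as in the proof of Lemma \ref{min3}, and pass to the limit.

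The step I expect to be the main obstacle is handling the possible degeneracy at $\bar u=-\infty$ (i.e. $u=0$) and the possibility $\sup_{P_T}\bar v=\infty$ cleanly, since the comparison principle Lemma \ref{sec2.2} is stated for finite $u,v$ with $\sup_{P_T}v<\infty$. The resolution mirrors the truncation argument already carried out in the proof of Lemma \ref{min3}: replace $\bar v$ by $\min\{\bar v, N\}$ for large $N$ (still a super-solution of the $\bar v$-equation by the analogue of Lemma \ref{sub} after the change of variables), obtain $\bar u-\min\{\bar v,N\}\le \max_{P_T}(\bar u-\min\{\bar v,N\})\le\max_{P_T}(\bar u-\bar v)^{+}+\,$(a term controlled on $\{\bar v>N\}\cap P_T$), and let $N\to\infty$; the case $\inf_{U_T}\bar u=-\infty$ plays no role because the conclusion only constrains $\bar u-\bar v$ via its maximum over $P_T$, and on $\Om_T$ the value $\bar u=-\infty$ only makes the inequality easier. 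Once the bookkeeping on $P_T$ is in place, the heart of the argument is the single invocation of the additive ($k=1$) comparison principle after the logarithmic substitution.
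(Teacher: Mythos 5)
Your final route is, in substance, the paper's: the paper's proof is one line --- for $\ve\in\IR$, $\bar u_\ve=\max\{\bar u,\ve\}$ is a sub-solution; apply Subsection 2.2 and Lemma \ref{sec2.2} --- so the essential step is exactly the direct application of the additive comparison principle of Lemma \ref{sec2.2} to $\bar u,\bar v$ (they already satisfy an equation of that form, with $F=H$ and $f\equiv 1$), together with a truncation to meet its finiteness hypotheses. In particular your opening detour through $u=e^{\bar u}$, $v=e^{\bar v}$ and Lemma \ref{min3} is unnecessary and can be deleted; the exponentiation buys nothing here.

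Two genuine defects remain. First, your write-up is internally inconsistent about the sub/super roles: in the first paragraph you read the displayed inequalities literally (so $e^{\bar u}$ is the super-solution, $e^{\bar v}$ the sub-solution), while in the second you assert that ``the hypothesis written in the statement is precisely'' that $\bar u$ is the sub-solution for Lemma \ref{sec2.2}. As printed the signs say the opposite, and with $\bar u$ a super-solution and $\bar v$ a sub-solution the asserted bound $\bar u-\bar v\le\max_{P_T}(\bar u-\bar v)$ is generally false (the difference then obeys a minimum, not a maximum, principle). The signs in the corollary are evidently a misprint --- the paper's own proof calls $\max\{\bar u,\ve\}$ a sub-solution --- but you must say explicitly that you are reading $\bar u$ as the sub-solution and $\bar v$ as the super-solution, rather than asserting both readings in successive paragraphs. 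Second, you truncate only $\bar v$ (from above, to secure $\sup_{P_T}\bar v<\infty$, a point the paper indeed glosses over and which is a reasonable addition), but you dismiss the possibility $\bar u=-\infty$ on the closure by saying the conclusion ``only gets easier'' there. The issue is not the conclusion but the applicability of Lemma \ref{sec2.2}, which compares finite usc and lsc functions; its proof needs a finite sub-solution. That is precisely what the paper's truncation repairs: $\bar u_\ve=\max\{\bar u,\ve\}$ is still usc and still a sub-solution (maximum of a sub-solution and a constant, in the spirit of Lemma \ref{sub}), one applies Lemma \ref{sec2.2} to $\bar u_\ve$ and $\bar v+\max_{P_T}(\bar u-\bar v)$ (the case where this maximum is $+\infty$ being trivial), and then removes the truncation as in the proof of Lemma \ref{min3}, e.g.\ by choosing $\ve$ below $\inf_{\Om_T}\bar u$ and controlling the extra boundary contribution before letting $\ve\rightarrow-\infty$. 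With these two repairs your argument coincides with the paper's.
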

\begin{proof} For $\ve \in \IR$, $\bar{u}_\ve=\max\{ \bar{u},\ve\}$ is a sub-solution. Apply Subsection 2.2 and Lemma \ref{sec2.2}. \end{proof}
\vsp

\section{ Proof of Theorem \ref{thm:1}: $k=1$}
\vsp
We take $k=1$. Then $\G_k=\G_1$ and $u>0$ solves 
$$\G_1[u]\equiv H(D^2u)-u_t\le 0\quad\mbox{in $\Om_T$}.$$
Note that $H$ can be a fully nonlinear operator, as described in Section 1, see discussion following Condition C.

We recall for ease of reference the conditions satisfied by $H$.  From Condition A, $H=H(X), \; X\in S^n,$ is continuous and non-decreasing in $X$. Condition B imposes that 
\bea\label{min1}
H(\tht X)=\tht H(X),\;\forall \tht>0.
\eea 
In addition, $H$ satisfies Condition C, see (\ref{mM}) and (\ref{sec2.7}). From (\ref{sec2.7}) (C(ii)),  
\be\label{min2}
L(\Lam)\equiv\min_{|e|=1}H( \Lam e\otimes e -I)\ge -M(\Lam)>0,\;\;\forall \Lam\ge \Lam_1\ge 1.
\ee
\vsp
Let $z\in \IR^n$. Set $r=|x-z|$ and $e=(x-z)/r$. Suppose that $\phi(x,t)=\phi(r,t)$ is $C^2$ in $x$. From (\ref{sec2.5}), in $r>0$,
\be\label{min5}
 H(D^2 \phi)= H\left(\frac{\phi_r}{r} \left( I-e\otimes e \right)+\phi_{rr} e\otimes e \right).
\ee

We introduce additional notation. Given two points $x$ and $y$ in $\IR^n$, the vector $\vec{xy}$, in $\IR^n$, is the directed segment with initial point $x$ and terminal point $y$.
Set $\vec{x}=\vec{ox}$; then $\vec{xy}=\vec{y}-\vec{x}.$
\vsp

{\bf Proof of Theorem \ref{thm:1}.} Set $m\equiv \inf_{\Om_T} u$. Since $\G_1[u-m]\le 0$, we may assume that $m=0$ and $u\ge 0$ in $\Om_T$.
\vsp
\subsection{\bf (a) Hopf Boundary Principle:} The proof is a slight modification of the standard proof. We provide details.

Let $(p,\tau)\in P_T,\;\tau<T,$ be such that $u(p,\tau)=0$. Suppose that there is a $\IR^{n+1}$ neighborhood $N$ of $(p,\tau)$ such that $u>0$ in $N \cap\Om_T$. 
\vsp
{\bf Side boundary:} Since $\p\Om$ has interior  $\IR^n$ ball property at $p$, there is a $0<\rho_0< \tau$ and  $q=q(\rho)\in \Om$, such that for $0<\rho\le \rho_0$, $$B_\rho(q)\subset \Om\quad \mbox{and}\quad p\in \p B_\rho(q)\cap \p\Om.$$
We take $\rho>0$, small enough so that $B_\rho(q)\times[\tau-\rho, \tau]\subset N$, and, hence,
\be\label{minH2}
u>0\quad\mbox{in}\quad B_\rho(q)\times[\tau-\rho, \tau].
\ee
\vsp
 Set $r=|x-q|$, and the $\IR^{n+1}$ partial spherical shell
$$  S=\left\{\;(x,t)\;:\;      \frac{\rho^2}{4} \le r^2+(\tau-t)^2 \le \rho^2,\;\;\mbox{and}\;\;\tau-\rho/4\le t\le \tau   \right\}.$$
Let $S_{in}$ and $S_{out}$ be the inner and the outer boundaries respectively. 
Clearly, 
\ben
&&S_{in}=\{(x,t)\;:\; r^2+(\tau-t)^2=\rho^2/4,\;\;\tau-\rho/4\le t\le \tau\} \subset B_{\rho/2}(q)\times[\tau-\rho/4, \tau],\\
&&S_{out}=\{ (x,t) \;:\;r^2+(\tau-t)^2=\rho^2,\;\;\tau-\rho/4\le t\le \tau\}\subset B_\rho(q)\times[\tau-\rho/4, \tau].
\een

\begin{figure}
\begin{tikzpicture}
   \draw (5,0) [solid]  arc (0:180:50mm and 5mm);
   \draw (-5,0) [solid]  arc (180:360:50mm and 5mm);
    \draw (2,0) [solid]  arc (0:180:20mm and 2mm);
   \draw (-2,0) [solid]  arc (180:360:20mm and 2mm);
   \draw (4,-1) [densely dashed]  arc (0:180:40mm and 4.5mm);
   \draw (-4,-1) [solid]  arc (180:360:40mm and 4.5mm);
   \draw (1.2,-1) [densely dashed]  arc (0:180:12mm and 1.5mm);
   \draw (-1.2,-1) [solid]  arc (180:360:12mm and 1.5mm);
   \draw (-5,0.7)--(0,0.7);
    \draw (-5.1,.6)--(-4.9,0.8);
     \draw (-.1,.6)--(.1,0.8);
     \draw (-2.5,.7) node[above] {$\rho$};
     \draw (2,0.7)--(0,.7);
      \draw (2.1,.8)--(1.9,.6);
      \draw (1,.7) node[above] {${\rho}/{2}$};
   \draw (0,-1) node[below] {$(q,\tau-\rho/4)$};
   \draw (0,0) node[above] {$(q,\tau)$};
    \draw (0,0.15) node[below] {$.$};
    \draw (0,-.85) node[below] {$.$};
    \draw (-5,0) .. controls (-4.7,-1/2) and (-4.7,-1/2) .. (-4,-1);
    \draw (4,-1) .. controls (4.7,-1/2) and (4.7,-1/2) .. (5,0);
   \draw (-2,0) .. controls (-1.7,-1/2) and (-1.7,-1/2) .. (-1.2,-1);
     \draw (1.2,-1) .. controls (1.7,-1/2) and (1.7,-1/2) .. (2,0);
\end{tikzpicture}
 \caption{The spherical shell $S.$}
 \end{figure}
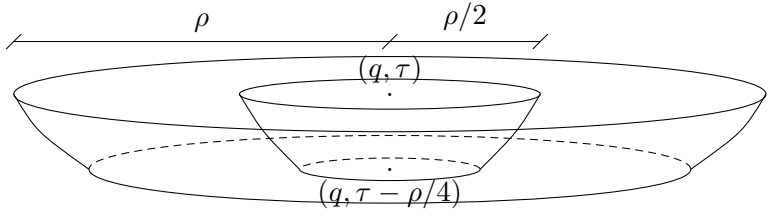

Set $U=(B_\rho(q)\setminus \overline{B}_{\rho/2}(q) )\times [\tau-\rho/4, \tau]$. Clearly, $S_{out}\subset U$ and $S_{in}$ is outside $U$. 
\vsp
We observe that the intersection of the $\IR^n$ plane $t=\tau-\rho/4$ with: 
\ben
&&\mbox{(i) $S_{out}$ is the $\IR^n$ sphere of radius $\sqrt{15}\rho/4$, and}\\
&&\mbox{(ii) $S_{in}$ is the $\IR^n$ sphere of radius $\sqrt{3}\rho/4$.}
\een
Thus, 
$$r\ge \sqrt{3}\rho/4,\;\; \mbox{in $S$.}$$

Moreover, recalling that $U\subset N$, by (\ref{minH2}), there is an $\ve>0$ such that
\be\label{minH4}
u(x,t)>\ve,\quad \mbox{if}\;\;0\le r\le \frac{\sqrt{15}\rho}{4}\;\;\mbox{and}\;\;\tau-\frac{\rho}{4}\le t\le \tau.
\ee
\vsp
We construct an auxiliary function  $\psi$ in $S$ as follows:
\be\label{minHs}
\eta(r,t)=\exp(-a(r^2+(\tau-t)^2),\;\;\mbox{and}\;\;\psi(x,t):=\psi(r,t)=\eta(r,t)-\eta(\rho, \tau),
\ee
where $a>0$ is to be determined. 

Observe that $\psi=0$ on $S_{out}$. Choose $a>0$. large, so that $\forall (x,t)\in S_{in}$, $0<\psi(x,t)=\exp(-a\rho^2/4)-\exp(-a \rho^2)<\ve.$ This ensures that
 $0<\psi\le \ve$ in $S$. Summarizing,
\be\label{S}
\mbox{$0<\psi\le \ve$, in $S$, $\psi=0$ on $S_{out}$ and $\psi\le \ve$ on $S_{in}.$} 
\ee 
Employing (\ref{min5}) and (\ref{minHs}), we obtain in $r\ge 0$,
$$H(D^2\psi)= H\left( 4a^2r^2\eta e\otimes e- 2a \eta I \right)
=2a \eta H(2ar^2 e\otimes e-I). $$
 We get from above,
\ben
\G_1[\psi]= H(D^2\psi)-\psi_t= 2a\eta \left[  H\left(2ar^2 e\otimes e-I \right) - (\tau-t) \right],\quad \mbox{in $S$}.
\een
Recalling (\ref{min2}) and that $r\ge \sqrt{3}\rho/4$, we choose $a$ large enough so that 
$$2ar^2\ge \frac{ 3a\rho^2}{8}\ge \Lam\ge \Lam_1.$$
Noting further that $\tau-t\le \rho/4$, (\ref{min2}) leads to
\ben
\G_1[\psi]\ge 2a\eta \left[ L(\Lam) - \frac{\rho}{4}  \right].
\een
By choosing $\rho$ small enough, $\psi>0$ is a sub-solution in $S$.

We now apply the comparison principle in Lemma \ref{min3}(see Theorem \ref{sec2.3}) in $U$. Note that $S_{out}\subset U$ and $S_{in}$ is outside $U$. Extending $\psi$ by zero in $r^2+(t-\tau)^2\ge \rho^2$, we get 
a sub-solution in $U$ (a proof is provided below). Next, recall (\ref{S}); $u\ge \psi$ on the parabolic boundary of $U$ since $u\ge \psi=0$ on $r=\rho$, $u>\ve\ge \psi$ on $r=\rho/2$ (see (\ref{minH4})), and $\psi\le \ve\le u$ on $t=\tau-\rho/4$. 
Thus, by the comparison principle,
$$\psi\le u \quad  \mbox{in $U$,}$$
 and, hence, in $S$.
We note that one could also apply the comparison principle directly to $S_{\rho,q}$ as $S_{out}$, $S_{in}$ and the flat base form its parabolic boundary. 

Recall (\ref{minHs}). Since $u(p,\tau)=\psi(p,\tau)=0$, for any $(x,t)\in S$, 
\bea\label{minH6}
u(x,t)-u(p, \tau) &\ge& \psi(x,t)-\psi(p, \tau)=\exp(-a \rho^2)\left\{ \exp\left( a [\rho^2-r^2-(\tau-t)^2] \right)-1 \right \}  \nonumber\\
&\ge& \exp(-a \rho^2)\left[ a \left\{ \rho^2-r^2-(\tau-t) ^2 \right\} \right].
\eea

Let $\vec{\g}\in \IR^{n+1}$ and $\tht_0$ be as in the statement of the lemma. Choose $(x,t)=(p,\tau)+\tht \vec{\g}=( p+\tht \vec{\g}_n, \tau+\tht \g_{n+1}).$ Note that 
$\langle \vec{pq}, \vec{\g}_n \rangle>0$ and $\g_{n+1}\le 0$.

Since $\vec{px}=\tht \vec{\gamma}_n$ and $\vec{xq}= \vec{pq}-\vec{px} =\vec{pq}-\tht \vec{\g}_n$, 
\ben
\rho^2-r^2\ge (\rho-r)\rho\ge \rho\left( \rho- |\vec{xq}| \;\right)\ge c\rho \tht | \vec{\g}_n|,
\een
where $c=c(\vec{\g}_n, \vec{pq})>0.$
Clearly, (\ref{minH6}) implies that for any $0<\tht\le \tht_0$, 
\ben
\frac{u(x,t)}{|(x,t)-(p,\tau)|}=\frac{u(x,t)-u(p,\tau)}{ \tht | \vec{\g}| }  \ge  a\exp(-a\rho^2)  \left( \frac{ c\tht \rho | \vec{\g}_n|-\tht^2|\g_{n+1}|^2}{\tht |\vec{\g}|} \right) >0,
\een
if $\tht_0$ is small enough.
\vsp
Finally, we check that $\psi$ is a sub-solution. It is enough to check the definition at points on $S_{out}$. Suppose that 
$\phi$, $C^2$ in $x$ and $C^1$ in $t$, is such that $\psi-\phi$ has a maximum at a point $(y,s)\in S_{out}$, i.e., $(\psi-\phi)(x,t)\le (\psi-\phi)(y,s)$. Since $\psi\ge 0$ and $\psi(y,s)=0$,  we get
that $\phi(y,s)\le \phi(x,t)-\psi(x,t)\le \phi(x,t).$ Thus, $\phi$ has a minimum at $(y,s)$ and so $D\phi(y,s)=0,\;\phi_t(y,s)=0$, and $D^2\phi(y,s)\ge 0.$ 
Thus, $H(D^2\phi(y,s))-\phi_t(y,s)\ge 0.\quad \Box$

\vsp
\begin{cor}\label{minH-1} Suppose that $u>0$ and $m>0$. By using $v=\log u$, the Hopf principle holds for
$$H(Dv, D^2v+Dv\otimes Dv)-v_t\le 0.$$
\end{cor}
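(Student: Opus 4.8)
The plan is to obtain this from Theorem \ref{thm:1}(a) via the substitution $u=e^v$, which avoids constructing a barrier for the operator carrying the extra $Dv\otimes Dv$ term. Since $m=\inf_{\Om_T}u>0$, the function $v=\log u$ is lower semicontinuous, bounded below by $\log m$, and, by the change of variable formula of Subsection 2.2 (with $k=1$, so that $H$ does not see the gradient), it is a viscosity super-solution of $H(Dv,D^2v+Dv\otimes Dv)-v_t\le 0$ in $\Om_T$; moreover, since $\log$ is a smooth increasing bijection of $(0,\infty)$ onto $\IR$, this substitution is reversible on positive functions, so the Hopf estimate proved below for $v=\log u$ is in fact the Hopf estimate for the transformed equation. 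Because $t\mapsto\log t$ is strictly increasing, $\inf v=\log m$, $v(p,\tau)=\inf v$ exactly when $u(p,\tau)=m$, and $v>\inf v$ near $(p,\tau)$ exactly when $u>m$ near $(p,\tau)$; hence the standing hypotheses on $\p\Om\in C^2$, on $\g$, and at $(p,\tau)$ translate verbatim between $u$ and $v$, and Theorem \ref{thm:1}(a) applied to $u$ yields, writing $(x_\tht,t_\tht):=(p+\tht\vec{\g}_n,\tau+\tht\g_{n+1})$, constants $c>0$ and $\tht_1>0$ with
$$u(x_\tht,t_\tht)-m\ge c\,\tht\qquad\text{for }0<\tht<\tht_1.$$

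The remaining step is to push this bound through the logarithm. Along the segment one has $u(x_\tht,t_\tht)\ge m$, so by concavity of $\log$,
$$\frac{v(x_\tht,t_\tht)-v(p,\tau)}{\tht}=\frac{\log u(x_\tht,t_\tht)-\log m}{\tht}\ge\frac{1}{\tht}\left(1-\frac{m}{u(x_\tht,t_\tht)}\right).$$
I would then split on the size of $u(x_\tht,t_\tht)$: if $u(x_\tht,t_\tht)\le 2m$, the right-hand side is at least $(u(x_\tht,t_\tht)-m)/(2m\tht)\ge c/(2m)$; if $u(x_\tht,t_\tht)>2m$, it is at least $1/(2\tht)$, which is $\ge c/(2m)$ once $\tht\le m/c$. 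In either case the right-hand side is $\ge c/(2m)$ for all sufficiently small $\tht$, and therefore
$$\liminf_{\tht\rightarrow 0^+}\frac{v(x_\tht,t_\tht)-v(p,\tau)}{\tht}\ge\frac{c}{2m}>0,$$
which is the asserted Hopf estimate for $v$.

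I do not expect a genuine obstacle here: the substance is exactly the change of variable of Subsection 2.2 together with Theorem \ref{thm:1}(a), already established. The one place needing a separate word is the last step, transferring the one-sided difference quotient through the nonlinear map $u=e^v$; the subtlety is that lower semicontinuity of $v$ does not force $u=e^v$ to be locally bounded above near $(p,\tau)$, so a naive linearization of $\log$ at the level $m$ is unavailable, and the two-case estimate above — exploiting that $1-m/u(x_\tht,t_\tht)$ is automatically close to $1$ whenever $u(x_\tht,t_\tht)$ is large — is what makes the passage rigorous.
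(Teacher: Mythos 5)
Your argument is correct and follows essentially the paper's own route: the paper disposes of this corollary simply by invoking the change-of-variable formula of Subsection 2.2 together with the already proved Hopf principle of Theorem \ref{thm:1}(a), which is exactly the reduction $u=e^{v}$ you carry out. The only difference is that you make explicit the transfer of the one-sided difference-quotient bound through the logarithm (your two-case estimate via $\log(u/m)\ge 1-m/u$), a detail the paper leaves implicit.
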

See Subsection 2.1.
\vsp
\subsection{\bf (b) Strong Minimum Principle:} We continue to assume that $k=1$ and 
$$\G_1[u]=H(D^2u)-u_t\le 0\quad \mbox{in $\Om_T$}.$$

Suppose that $u\ge 0$ and $m=0$. Suppose that $u(p,\tau)>0$, for some $(p,\tau)\in \Om_T$.
\vsp
We make an observation. Let $\phi=\phi(x,t)\in C^2,\;\phi>0$ and $\bt\ge 2$. Then
\be\label{minS0}
 H(D^2\phi^\bt)= \bt \phi^{\bt-1}  H\left(D^2\phi + \left( \frac{\bt-1} {\phi} \right) D\phi\otimes D \phi \right).
\ee
\vsp
Let $(q, s)$ in $\Om_T$ with $s>\tau$. We comment on $q$ and $s$ later. Set 
$$\dl=s-\tau,\quad \vec{\g}_n=\vec{pq},\quad\mbox{and}\quad \vec{\g}=(\vec{\g}_n, \dl).$$
Clearly, $\vec{q}=\vec{p}+\vec{\g}_n.$ Vectors with lower case letters are in $\IR^n$, except for $\g$. 

The points $\vec{\mP}_t$ on the segment $S$ (in $\IR^{n+1}$) with end points $(p,\tau)$ and $(q,s)$, are parametrized by $t$ as
\be\label{minS2}
\vec{ \mP}_t:=\left(   \vec{p}  +  \left( \frac{t-\tau}{\dl} \right) \vec{\g}_n, \; t \right),\;\; \tau\le t\le \tau+\dl.
\ee
The notation $\vec{\mP}_t$ is a vector in $\IR^{n+1}$.

We call
$$  \vec{d}(x,t)=(\vec{x},t)-\vec{\mP_t}= \vec{x} -  \left( \vec{p}  +  \left( \frac{t-\tau}{\dl} \right) \vec{\g}_n  \right)\quad \mbox{and}\quad d(x,t)=|\vec{d}(x,t)|. $$
We will often write $d$ and $\vec{d}$ in place of $d(x,t)$ and $\vec{d}(x,t)$ respectively.  

Let $0<\D\le \dl.$ Define the slanted cylinder
$$C_{\rho, \D}=C_{\rho,\D}(\vec{p},\tau)=\left \{ (x,t)\; : \; d(x,t)\le \rho,\; \tau\le t\le \tau+\D \right\}.$$
Its axis is along the segment $S$, see (\ref{minS2}). Also, at $t=\tau+\D$, the point in $S$ is
$$\vec{\mP}_{\tau+\D}=\left( \vec{p}+\frac{ \D}{\dl} \vec{\g}_n, \tau+\D \right).$$ 
\vsp
Define 
\be\label{phi}
\phi(d)=\phi(x,t)= \rho^2 - d(x,t)^2 \;\;\;\mbox{and}\;\;\eta(t)=\frac{\tau+2\D-t}{2\D}.
\ee
Choose $0<\rho\le 1$, and set
\be\label{minS2a}
\psi(x,t)=\phi(d)^2\eta(t),\;\;\;\mbox{in $C_{\rho,\D}(p, \tau)$.}
\ee
\vsp
We show that for an appropriate $\D>0$ and $|\vec{\g}_n|\ne 0$, $\psi$ is a sub-solution in  $C_{\rho, \D}$. 
\vsp
We compute $H(D^2\psi)-\psi_t$. Recalling (\ref{minS0}),
\ben
 H(D^2\psi)=\eta H(D^2\phi^2) = 2 \eta \phi H\left(D^2\phi  + \frac{ D\phi\otimes D\phi }{\phi}\right).
\een

We note that
$$d^2=\langle\vec{d}, \vec{d}\rangle,\quad D\vec{d}=I\quad\mbox{and}\quad  \frac{\p \vec{d} }{\p t}=-\frac{ \vec{\g}_n }{\dl}.$$
Next, we write $\vec{d}=d\vec{e}=de$, where $e=e(x,t)$ is a unit vector. 
 Differentiating (\ref{phi}), 
$$D\phi= -2  \vec{d} ,\quad D^2\phi= -2I\quad\mbox{and}\quad D\phi\otimes D\phi= 4 \vec{d}\otimes \vec{d}=4d^2e\otimes e.$$ 
 Hence, from (\ref{minS2a}),
\ben
&& H(D^2\psi)= 4 \phi\eta \; H\left(  -I + \frac{ 2 d^2}{\phi} e\otimes e \; \right),\quad\mbox{and}\\
&&\psi_t=- \frac{ \phi^2}{2\D} + 4 \eta \phi \frac{\langle \vec{d}, \vec{\g}_n \rangle }{ \dl}= \phi  \left( \frac{ 4\eta \langle \vec{\g}_n, \vec{d}\rangle } {\dl} - \frac{\phi}{2\D}  \right).
\een 
Combining the two expressions, we get
\ben
 \G_1[\psi]=H(D^2\psi)-\psi_t &=&4 \phi \eta H\left( -I + \frac{2  d^2}{\phi}e\otimes e \; \right) +
\ve \phi  \left( \frac{ \phi}{2\D}  - \frac{ 4\eta \langle \vec{d}, \vec{\g}_n \rangle }{\dl}  \right)  \nonumber\\
&=&   \phi \left[ 4  \eta  H\left( -I + \frac{2  d^2}{\phi}e\otimes e \; \right) +
 \frac{ \phi}{2\D}  - \frac{ 4\eta \langle \vec{d}, \vec{\g}_n \rangle }{\dl} \right ].
\een
Noting that $\eta\le 1$ in $C_{\rho, \D}$, a rearrangement leads to
\bea\label{minS4-0}
\G_1[\psi] &\ge&   \phi\left[    \frac{\phi }{2\D} +  4 \eta  H\left( -I + \frac{2  d^2}{\phi}e\otimes e \; \right) -  \frac{ 4  d |\vec{\g}_n| }{\dl}\right].
\eea
\vsp
Next, noting (\ref{min2}), we choose $\Lam\ge \Lam_1$ and set
\be\label{minS3-0}
\upsilon=\sqrt{ \frac{\Lam }{\Lam+2} },\quad\; \upsilon_0=\sqrt{1-\upsilon^2}=\sqrt{ \frac{2 }{\Lam+2} }, \;\;\;\mbox{and}\;\;\;\;R=\upsilon \rho.
\ee
Then, $\phi(R)=\upsilon_0^2\rho^2$, and 
\ben
\mbox{in $R\le d<\rho$:}\qquad   \frac{2d^2}{\phi(d)}\ge  \frac{2 R^2}{\phi(R)}=\frac{2\upsilon^2}{\upsilon_0^2}=\Lam.
\een
Recalling (\ref{min2}), 
\be\label{minS4}
H\left( \frac{ 2 d^2}{\phi} e\otimes e -I \right)\ge H(\Lam e\otimes e-I)\ge L(\Lam)>0,\;\;\forall \; R\le d<\rho.
\ee

\vsp
We divide $0\le d<\rho$ into two intervals: $0\le d\le R$ and $R\le d<\rho$, where $R$ is as in (\ref{minS3-0}). 
\vsp
Recalling Conditions $A$, (\ref{min2}) and (\ref{minS4}), we estimate,  for some $M< 0$ (see (\ref{C})),
\be\label{minS5a}
H\left(  -I + \frac{2 d^2}{\phi}e\otimes e\right)\ge   \left\{ \begin{array}{lcr} L(\Lam)>0,&& \mbox{in $R\le d<\rho$,}\\ H(-I)\ge -|M|, && \mbox{in $0\le d\le R$}. \end{array}\right.
\ee
\vsp
Next, we derive conditions under which $\psi$ is sub-solution in $C_{\rho, \D}$.
\vsp
{\bf Interval ($R\le d< \rho$):} We use $1/2\le \eta\le 1$, (\ref{minS4-0}) and (\ref{minS5a}) to obtain
\ben
\G_1[\psi] \ge \phi\left[   4 \eta  H\left( -I + \frac{2  d^2}{\phi}e\otimes e \; \right) -  \frac{ 4  d |\vec{\g}_n| }{\dl}\right]
\ge \phi\left[  2 L(\Lam) -  \frac{ 4 \rho |\vec{\g}_n| }{\dl}\right].
 \een
 in $\tau\le t\le \tau+\D.$ 
 
Then $\psi$ is a sub-solution in $R\le d<\rho$ and $\tau\le t\le \tau+\D$, if 
\be\label{minS6}
 |\vec{\g}_n|  \le \frac{\dl L(\Lam)}{2\rho}.
 \ee
\vsp
{\bf Interval ($0\le d\le R$):} We use the estimates $\phi(R)\le \phi(d)\le \rho^2$ and $1/2\le \eta\le 1$. From (\ref{minS4-0}), (\ref{minS3-0}) and (\ref{minS5a}), we obtain
\ben
\G_1[\psi]  \ge  \phi   \left[    \frac{\phi(R) }{2\D} +  4 \eta  H(-I ) -  \frac{ 4  \rho |\vec{\g}_n| }{\dl}\right]
 \ge \phi\left[  \frac{ \upsilon_0^2\rho^2}{2\D}  -4|M| - \frac{ 4 \rho |\vec{\g}_n| }{\dl}   \right]
 \een
 
First, we choose $\D$ and $\g_n$ such that 
\be\label{minS7}
\D= \frac{\upsilon_0^{2}\rho^2 }{16 |M| } \equiv K_1 \rho^{2}\quad\mbox{and}\quad |\vec{\g}_n|\le \frac{|M| \dl}{ \rho}.
\ee
where $K_1=K_1(M, \Lam).$ Next, using (\ref{minS6}) and (\ref{minS7}), we select
\bea\label{minS8}
0<|\vec{\g}_n| \le  \dl \left( \min\left \{  \frac{ L(\Lam)}{2\rho},\;   \frac{ |M|}{\rho}  \right \}\right)
=\frac{K_2 \dl}{\rho},
\eea
where $K_2=K_2(L,M, \Lam).$ With these selections, $\psi$ is a sub-solution in $C_{\rho, \D}$.  Note that $\D$ depends on $\rho$ but is independent of $\dl$. However, $|\vec{\g}_n|$ is dependent on $\rho$ and $\dl$.

From the estimate for $\vec{\g}_n$, it is clear that we can allow $\vec{pq}$ large by selecting $\rho$ small. However, this makes $\D$ small. Thus, iterations may be needed to 
reach the time level $s$. We present details of the argument below.  
\vsp
Observe that the strip $\Om_{\tau+\D}\setminus \Om_\tau=\Om\times [\tau, \tau+\D).$ Our goal is to show that if $u(p,\tau)>0$ then $u(q,t)>0$ for $t$ in $[\tau, \tau+\D)$. 

Let $\vec{\g_n}=\vec{pq}$. Noting (\ref{minS8}), choose $0<\rho<1$ so that 
$$u(x,\tau)\ge \frac{u(p,\tau)}{2}, 
\quad \mbox{ in $|x-p|\le \rho$,}\quad \mbox{and}\quad 0<|\vec{\g_n}|\le \frac{ K_2\dl}{\rho}.$$
Fix $\rho$. Recalling (\ref{minS2a}) and (\ref{minS7}), we choose 
$$\D=K_1 \rho^{2}\quad\mbox{and}\quad 
\hat{\psi}(x,t)=\frac{ u(p,\tau)}{2}\left( \frac{\psi(d,t)}{\rho^4}\right)=\frac{u(p,\tau)}{2} \left(\frac{\phi(d)^2}{\rho^4}\right)\eta(t).$$ 
Note that $\G_1[c\psi]=c\G_1[\psi]\ge 0,$ if $c>0$, see Condition B. Thus, $\G_1\hat{\psi}\ge 0.$

 Observe that $0\le \hat{\psi}(x,\tau)\le u(p.\tau)/2$, in $B_\rho(p)$, and $\psi=0$ along the slanted cylindrical side. 
Thus, $\hat{\psi}$, with the selections made above, is a sub-solution in $C_{\rho, \D}$, and, by an extension by zero, in $\Om_{\tau+\D}\setminus\Om_\tau$.
That this extension  of $\hat{\psi}$ results in a sub-solution follows closely the argument presented in Subsection 3.1.

Using the comparison principle in Theorem \ref{sec2.3}, we obtain that
$u\ge \hat{\psi}$ in $\Om_{\tau+\D}\setminus \Om_\tau$. If $\dl<\D$, then, noting (\ref{minS2}) and (\ref{minS2a}), and taking $s=\tau+\dl$
\be\label{minS10-0}
 u \left(q, s \right)\ge \hat{\psi}\left(q, s \right)=\hat{\psi}(0, \tau+\dl)=\frac{u(p,\tau)}{2} \left( \frac{2\D-\dl}{2\D} \right)>0.
\ee
The claim follows. 
\vsp
Suppose that $\D\le \dl.$ Let $j=2,3,\cdots,$ be such that $(j-1)\D\le \dl< j \D$.
We use a chain of $j$ slanted $\IR^{n+1}$ cylinders of the same $t$ height and shape, with their axes along the segment $S$, see (\ref{minS2}).  Let $\D_s<\D$ be such that $j\D_s>\dl$. This adjustment is needed as $u$ is $lsc(\Om_T)$ and the relation $u\ge \psi$ (see above) may not extend to 
$t=\tau+\D$. As a result, we use the level $t=\tau+\D_s$ to bound $u$ from below by $\psi$. This modification is applied at every step. However, at every step, the $t$-heights of the cylinders, in which we define the auxiliary functions, continues to be $\D$. 
\vsp
We use an iterative process and apply the comparison principle at every step. We describe the individual cylinders and the auxiliary functions. The quantities $\rho, \; \g_n,\;\D,\;\D_s$ and $\phi$ stay the same at every step. The function $\eta$ will change.
For $\ell=0,1,2,\cdots$, set
\ben
\D=K_1\rho^2,\quad \eta_\ell(t)=\frac{ \tau_\ell+2\D-t}{2\D}\qquad\mbox{and}\qquad  \phi(d)=\rho^2-d^2,
\een
where $d=|(\vec{x},t)-\mP_t|$ (see (\ref{minS2})), and the quantity $\tau_\ell$ is defined below. Set for $\ell=1,2,\cdots$,

\begin{figure}\label{fig2}

    \begin{tikzpicture}
     \draw (-1.8,.2)--(4.2,.2);
      \draw (1.3,0.2) node[above] {$( \vec q,\tau+\delta)$};
    \draw (0,-1.5) node[below] {$( \vec p,\tau)$};
    \filldraw [black] (0.5,-.75) circle (2pt);
    \draw (-2.5,-.75)--(3.5,-.75);
    \filldraw [black] (1.13,.2) circle (2pt);
      \draw (.5,-.75) node[above] {$(\vec{p}+\frac{ \D}{\dl} \vec{\g}_n,\tau+\Delta)$};
      \filldraw [black] (0,-1.5) circle (2pt);
      \draw[dashed,->] (0,-1.5)--(2,1.5);
      \draw (-3,-1.5)--(3,-1.5);
         
          \draw (4.75,1)--(2.7,-2);
            \draw (-1.2,1)--(-3.4,-2);
    \end{tikzpicture}
    \caption{Vector $( \vec p,\tau)$ to $( \vec q,\tau+\delta)$}
\end{figure}
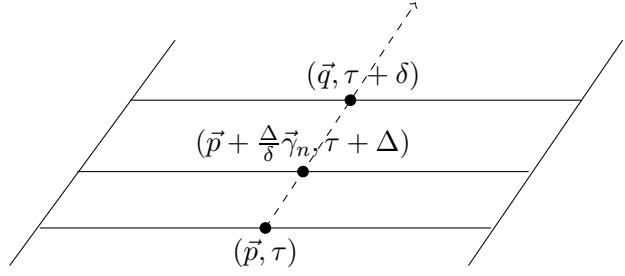

\bea\label{minS9}
&&\tau_0=\tau\quad \mbox{and}\quad \tau_{\ell}=\tau+\ell\D_s;\quad \vec{p}_0=\vec{p}\quad \mbox{and}\quad \vec{p}_\ell= \vec{p}+\frac{\ell \D_s}{\dl}\vec{\g}_n;   \nonumber\\
&&\vec{\mP}_0=(\vec{p},\tau)\;\;\mbox{and}\;\;
\vec{\mP}_\ell=\mP_{\tau_\ell}=\left(\vec{p}_\ell, \tau_\ell \right)
=\left( \vec{p}+\frac{\ell \D_s \vec{\g}_n}{\dl},\; \tau+\ell\D_s\right).
\eea

Next, we set
\bea\label{minS10}
\bt=\frac{2\D-\D_s}{2\D}\;\mbox{and}\;\;  \ve_\ell=\frac{\bt^\ell u(p,\tau)}{2};\;\; \hat{\psi}_\ell(x,t)=\frac{\ve_\ell \phi(d)^{2} \eta_\ell(t)}{\rho^4} \quad\mbox{in $C_{\rho,\D}(p_\ell,\tau_\ell)$.}
\eea

Applying the above procedure and taking $\ell=j$, we get that $u(q,s)\ge \hat{\psi}_j(q,s)$. Thus,
$$u(q,s)\ge \frac{\bt^j \phi(d)^{2}u(p,\tau)\eta_j(s)}{2\rho^{4}}\ge  \frac{u(p,\tau)\phi(d)^{2}}{2^{j+2}\rho^{4}}\ge
\frac{2 ^{-\dl/\Delta}u(p,\tau)\phi(R)^{2}}{4\rho^{4}}\ge 0,$$ since $\Delta\ge 1/2$, $\eta_\ell\ge 1/2$ and $\tau_{j-1}<s\le \tau_{j}.$ 

Next, suppose that $\mathcal{P}$ is a polygonal path in $\Om$ connecting $p$ to $q$. Let $\{(p_i\}_{i=0}^\ell,$ be the end points of the segments that comprise $\mathcal{P}$, and are such that $p_o=p$, $p_\ell=q$, and, for every $i$, $p_ip_{i+1}$ is a segment. Let $\g=\max\{|p_i-p_{i+1}|\}$.
We choose $\{\tau_i\}_{i=0}^\ell$ such that $\tau=\tau_0<\tau_1<\cdots<\tau_\ell=s $ and $\dl=\tau_{i+1}-\tau_i=(\tau-s)/\ell.$ Choose
$$\rho\le \max\left\{\frac{K_2\dl}{\g},\;\mbox{dist}\{\mathcal{P}, \p\Om\}\right\}.$$
Further adjust $\rho$ so that $u(x,\tau)\ge u(p,\tau)/2$ in $|x-p|\le \rho.$ 
Next, mount $\ell$ slanted cylinders each having width $\rho$ and axis along the $\IR^{n+1}$ segment with endpoints $(p_i,\tau_i)$and $(p_{i+1},\tau_{i+1})$. The value of $\Delta$ does not change and $|p_i-p_{i+1}|\le \g.$ We iterate the previously described process $\ell$ times to obtain an estimate $u(q,s)\ge c u(p,\tau)$, where $0<c=c(\ell, \dl,\Delta,\g)<1.$ 
The claim holds.
$\Box$

\vspp

\section{Proof of Theorem \ref{thm:1}: $k>1$}
Let $m=\inf_{\Om_T}u$. We show that the case $k>1$ differs quite markedly from $k=1$. This occurs even when $m>0$. In this case, our work appears to provide a complete description. However, things are not clear in the case $m=0$, and we provide what appears, to us, to be a partial result. One of the difficulties seems to be that the quotient version of the comparison principle (see Theorem 
\ref{sec2.3}) becomes unclear at places where both the sub-solution and the super-solution are small. 
\vsp
\subsection{\bf Part (a) $m>0$:}  If $u=m$ somewhere in $\Om_T$ then it appears that, in general, the strong minimum principle may fail to hold. The same appears to be the case for the Hopf boundary principle at points on $P_T$ where $u=m$. However, we show a weaker version of the strong minimum principle does hold. Before presenting the proof, 
we discuss an example that supports this assertion.
\vsp
{\bf Example:} Let $m>0$, $T>0$ and $k>1$. We construct a super-solution $\xi$, in an appropriate $\Om_T$, such that its minimum $m$ is attained along a $t$-segment $(p,t),\;0<t \le T$, for some $p\in \Om$, and some $T>0$.
However, $\xi>m$ in the rest of $\Om_T$.  Note that our construction produces a super-solution in $\IR^n\times (0, T).$
\vsp
 Set $r=|x|$ and $\phi(r)=r^{(k+1)/(k-1)}.$ Using (\ref{sec2.5}) (see Subsection 2.1) and (\ref{sec2.7}) i.e, Condition C(i),
\bea\label{minS12}
H(D\phi, D^2\phi)= c r^{(k+1)/(k-1)}H\left( e, I-\frac{k-3}{k-1} e\otimes e \right) 
\le c  \phi(r) L,
\eea
for some constant $c=c(k)>0$ and $L=\max_{|e|=1}H(e, I).$
\vsp
We take $\Om_T=B_R(o)\times [0,T)$, where $R>0$. Define
$$\xi(x,t)=m+ \phi(r) \eta(t),\;\mbox{where}\;\;\eta(t)=\left( \frac{ 1}{E (2T-t)} \right)^{1/(k-1)}\;\mbox{and}\;\; E=\frac{c (k-1) L }{ m^{k-1}}.$$

Note that 
$$\eta^{\prime}(t)=E \eta^k/(k-1)\ge 0.$$

Using (\ref{minS12}), we get in, $0<r<R$
\ben
\G_k[\xi]&=&H(D\xi, D^2\xi)-\xi^{k-1}\xi_t\le c \phi \eta^k  L -\left( m+\phi \eta \right)^{k-1} \phi \eta^{\prime}\\
&\le& \phi \eta^k\left[ c  L- \frac{ m^{k-1}E}{k-1}  \right]\le 0.
\een
\vsp
We verify below that $\xi$ is a super-solution in $\Om_T$. Observe that 
$$\xi(o, t)=m,\;\;0<t\le T,\;\;\;\mbox{and}\;\;\; \xi(x,t)>m, \;\;\;x\ne o.$$
This shows that $u$ does not attain its minimum value anywhere except along $(o,t),\; 0<t<T$.

Let $\nabla$ be the $\IR^{n+1}$ gradient. Then $\nabla \xi(o,t)=0,\;0<t<T$. We modify the example slightly to show that, in general, the Hopf boundary principle does not hold. Let $z\ne o$ and $\rho=|z|$. Consider the domain $U=B_{\rho}(z)\times [0, T]$ and $r=|x|$, as defined above. Thus, $\xi>m$ is a super-solution in $U$ and 
$\xi(o,t)=m,\;0<t<T$. This is a segment on the parabolic boundary of $U$. Since $\nabla \xi(o,t)=0$, the Hopf principle fails.  
\vsp
We now show that $\xi$ is a super-solution in $\Om_T$, Firstly, $\xi\ge m$ in $\overline{\Om}_T$ and $\xi(o,t)=m,\;\;0<t\le T.$  
It is sufficient to prove that $\xi$ is a super-solution at $r=0$. 

Let $\zeta$, $C^2$ in $x$ and $C^1$ in $t$, be such that $\xi-\zeta$ has a minimum at $(o,s)$ for some $0<s<T$. Then
$\xi(x,t)-\xi(o,s)\ge \zeta(x,t)-\zeta(o,s)$. Note that $\xi$ is $C^1$ in $x$, and $C^1$ in $t$. At $r=0$, we get that $D\xi(o,s)=D\zeta(o,s)=0$ and $\xi_t(o,s)=\zeta(o,s)=0$.  
Since $k>1$, we get that 
$$H(D\zeta(o,s), D^2\zeta(o,s) )-\xi(o,s)^{k-1} \zeta_t(o,s)=0.$$
This finishes the proof. $\Box$
\vsp
{\bf Proof of Part (a):} We now show that if $u>0$ satisfies $\G_k[u]\le 0$, in $\Om_T$, and $u(p,\tau)>m$, for some $(p,\tau)\in \Om_T$, then there is a cylinder $C\equiv B_\rho (p)\times [\tau, T)$ 
such that $u(x, t)>m$ in $C$. As a result, if $u(p,\tau)=m$ then $u(p, t)=m$ for all $0<t<\tau$.
As the above example shows, this result can not be improved.

Suppose that $u(p,\tau)>m$. Let $\ve>0$ and $0<\rho<1$ be such that 
$$u(x,\tau)\ge m+\ve,\quad\mbox{in $B_\rho(p)$}.$$

Set $\dl=T-\tau$, $r=|x-p|$ and $S=B_\rho(p)\times [\tau, T)$. Define in $S$
\be\label{minS10-0-1}
\psi(x,t)=m+\ve \phi(r)^2 \eta(t)\quad\mbox{where}\quad \phi(r)=\rho^2-r^2\quad\mbox{and}\quad \eta(t)=\frac{\tau+2\dl-t}{2\dl}.
\ee

Using (\ref{sec2.5}) and Condition B, we get
\ben
 H(D\psi, D^2\psi)&=&(\ve \eta)^k H\left(-4r\phi e, -4\phi(I- e\otimes e)+ (-4\phi+ 8r^2) e\otimes e \right)\\
&=&(4\ve \phi \eta)^k r^{k-1} H\left(e,  \frac{2r^2}{\phi}e\otimes e  -I \right).
\een 
Hence,
\be\label{minS11}
 H(D\psi, D^2\psi)-\psi^{k-1}\psi_t = \left( 4\ve\eta \phi\right)^k r^{k-1} H\left( e, \frac{2 r^2}{\phi}e\otimes e -I \; \right)  
+   \frac {\ve \psi^{k-1} \phi^{2} } {2 \dl}.
\ee
We now recall (\ref{minS3-0}) and (\ref{minS4}) and divide the interval $0\le r<\rho$ into the sub-interval $0\le r\le R$ and $R\le r<\rho$. As argued in (\ref{minS4}),
$\G_k[\psi]\ge 0,$  in $R\le r<\rho$ and $\tau<t<T$.

We consider 
$0\le r\le R$; use (\ref{minS5a}), and the three estimates: $1/2 \le \eta\le 1$, $\phi(r)\ge \phi(R)=(\upsilon_0\rho)^2$ and $m\le \psi\le m+\ve$, to obtain
\bea\label{minS12-1}
\G_k[\psi]  &\ge& -\left( 4 \ve\eta\phi \right)^k   r^{k-1} |M|
+   \frac {\ve \psi^{k-1}\phi^{2} } {2 \dl} 
\ge   \frac {\ve m^{k-1} \phi(R)^{2} } {2 \dl}   - \left( 4\ve \right)^k  \phi(0)^k  r^{k-1} |M|    \nonumber \\
&\ge & \ve\left [ \frac { m^{k-1} (\upsilon_0\rho)^4 } {2 \dl} - 4^k \ve^{k-1}  \rho^{3k-1} |M| \right]= \ve \rho^4 \left [ \frac { m^{k-1} \upsilon_0^4}{2\dl}-4^k \ve^{k-1}\rho^{3k-5}|M|\right].
\eea
If $\ve>0$ is small enough then $\psi$ is  a sub-solution in $B_\rho(p)\times [\tau,T)$.
\vsp
Next, we observe that $u\ge \psi=m$ on $\p B_\rho(p)\times [\tau, T)$ and $u(x,\tau)\ge m+\ve\ge \psi(x,\tau),$ for $x\in B_\rho(p)$. By using the comparison principle Theorem \ref{sec2.3}, we get 
$\psi\le u$ in $S$. Thus, for any $(x,t)\in S$ we get that
$$u(x,t)\ge \psi(x,t)=m+\ve (\rho^2-|x-p|^2)\left(  \frac{ T-t+ \dl }{2\dl} \right)>m.$$
The claim holds. $\Box$

\vsp

\subsection{\bf Part (b) $m=0$:} We consider the case where $u\ge 0$ and $m=\inf_{\Om_T}u=0$. We assume that $u\in C(\Om_T)$. 
\vsp
{\bf Proof of Part (b):} We show that the zeros are not isolated. Assume to the contrary. Let 
$C\equiv B_\rho(p)\times (\tau-\dl, \tau)\subset \Om_T$, for some $\rho>0$ and $\dl>0,$  such that $u>0$ in $\overline{C}\setminus \{(p,\tau)\}$.

Let $P$ be the parabolic boundary of $C$. Since $u>0$ on $P$, there is a $\nu>0$ such that $u\ge \nu$ on $P$.
Recall the calculations done in (\ref{minS10-0}), (\ref{minS12}) and (\ref{minS11}). Define in $S$,
$$\psi(x,t)=\frac{ \nu}{2}+ \ve (\rho^2-r^2)^2\left( \frac{ \tau-t}{2\dl}  \right),$$
where $0<\ve \le \nu/(2\rho^4)$. As done in Sub-section (\ref{minS12}), by choosing $\ve$ small enough, $\psi$ is a sub-solution in $C$. Moreover, $\psi \le\nu\le u$ on $P$. Hence, by Lemma \ref{min3} (see Theorem \ref{sec2.3}),
$u\ge \psi$ in $S$, and it is clear that by choosing points $(p,t),\;t<\tau$, close to $(p,\tau)$, $u(p,\tau)\ge \nu/2>0$, a contradiction. The claim holds. $\Box$
\vsp

\section{Proof of Theorem \ref{thm:1.1}}
\vsp
The proof generalizes the result Theorem 1.2 in \cite{BM3}, and is based on the use of auxiliary functions. We recall a few items and introduce two auxiliary functions before presenting the proof.

 We recall that $\Om_\infty=\Om\times (0,\infty)$ and $P_\infty=(\overline{\Om}\times\{0\})\cup (\p\Om\times (0,\infty))$.
For $t>0$, set
$$\mQ_t=\overline{\Om}\times [t,\infty)\quad \mbox{and}\quad \mS_t=\p\Om\times [t,\infty).
$$

Let $T>0$ be as in the statement of the theorem. We assume that $u=h$ on $\mS_T$.
Set
$$m=\min_{\mS_T} h\quad\mbox{and} \quad M=\sup_{\mS_T} h.$$
Thus, Theorem \ref{sec2.3} implies that
\bea\label{bnds}
&&\mbox{If $u>0$ is a sub-solution then}\; u\le \max\{\max_{\overline{\Om}} u(x,T),\;M\}\quad \mbox{in $\mQ_T$}, \nonumber\\
&&\mbox{If $u>0$ is a super-solution then}\; u\ge \min\{\min_{\overline{\Om}} u(x,T),\;m\}\quad \mbox{in $\mQ_T$}.
\eea
First, apply the comparison principle  in $\Om\times(T,s)$, for $s>T$, and then let $s\rightarrow \infty$ to get the claim.
\vsp
Recall the notation
$$\Gamma_k[w]:=H(Dw, D^2w)-w^{k-1}w_t.$$
\vsp
Let $z\in \IR^n\setminus \overline{\Om}$ and set $r=|x-z|$. In what follows, $D,\;E,\;F$ and $a$ are positive constants.  Our calculations, done next, show $a$ depends on $E$, see below. The constants 
$D,\;E$ and $F$ are chosen in the proof of the theorem. 

Set 
\be\label{thm1.1-2}
R=\sup_{x\in \Om}|x-z|,\quad \mathcal{R}=\inf_{x\in \Om} |x-z|,\quad \mbox{and}\quad\mathcal{D}=\mbox{diam}(\Om).
\ee
Clearly, $\mathcal{R}>0$, and $r\ge \mathcal{R}>0$, if $x\in \Om$. Also, 
$$R\le \mathcal{R+D}\quad\mbox{and}\quad \Om\subset B_{\mathcal{R}+\mathcal{D}}(z)\setminus B_{\mathcal{R}}(z).$$
\vsp

{\bf Auxiliary Function 1 (Sub-solution):}  Let $z$ and $r$ be as defined above. For constants $D,\;E, \; F$, and $a$, we define
the function $\xi\in C^2(\Om_\infty)$ as follows:
\be\label{thm1.1-3}
\xi(x,t)=\al(r) \tau(t),\;\;\;\mbox{where}\;\;\;\al(r)=D e^{Er^2}\;\;\mbox{and}\;\; \tau(t)=\frac{e^{at} }{e^{at}+F}.
\ee
\vsp
Thus,
$$\al^{\prime}(r)=(2Er) \al,\quad\al^{\prime\prime}(r)=2E\al\left( 1+2Er^2\right)\quad \mbox{and}\quad \tau^{\prime}(t)=\tau \left( \frac{aF }{e^{at}+F} \right).$$
Calling $\om=(x-z)/|x-z|$, we get
\ben
D\xi =2Er \xi\; \om,\quad\mbox{and}\quad \xi_t=\xi \left( \frac{aF }{ e^{at}+ F} \right). 
\een

Using (\ref{sec2.5}), we get 
\ben
D^2\xi =\tau \left[  \frac{ \al'}{r} \left( I-\om \otimes \om \right)+\al^{''} \om \otimes \om  \right]
=2E \xi \left( I+2E r^2   \om \otimes \om  \right).
\een
\vsp
Using the above observations and Conditions A, B and C, we get
\ben
\Gamma_k[\xi]&=&(2E\xi)^k r^{k-1} H(\om, I+ 2E r^2 \om\otimes \om )-   \xi^{k}\left( \frac{aF}{ e^{at}+ F} \right)    \\
&\ge &\xi^k \left[ (2E)^k r^{k-1}H(\om, I)-  \frac{aF}{ e^{at}+ F}\right].
\een
Recalling (\ref{thm1.1-2}),
\ben
\Gamma_k[\xi]\ge \xi^k \left[ (2E)^k \mathcal{R}^{k-1} H(\om, I)- a\right].
\een
Thus, $\xi$ is sub-solution in $\Om_\infty$ if we choose (see (\ref{sec2.7}) C(i))
\be\label{thm1.1-4}
0<a< (2E)^k \mathcal{R}^{k-1}\min_{|\om|=1} H(\om, I).  \quad \Box
\ee
\vsp
{\bf Auxiliary Function 2 (Super-solution):} Let $z$ and $r$ be as above. For positive constants $D,\;E,\;F$, and $a>0$, we set
\be\label{thm1.1-40}
\zeta(x,t)=\bt(r) \tht(t), \quad\mbox{where}\;\;\bt(r)=D e^{-Er^2}\quad \mbox{and}\quad \tht(t)=1+ F e^{-at}.
\ee
We impose a condition on $E$ and $a$ for $\zeta$ to be a super-solution. Rest are chosen in the proof of the theorem.
Clearly,
$$\bt^{\prime}=(-2Er) \bt,\;\; \bt^{\prime\prime}=2E\bt( 2Er^2-1),\;\;\mbox{and}\;\; \tht^{\prime}=-aF e^{-at}=-\tht \left( \frac{aFe^{-at} }{1+Fe^{-at} }\right). $$

Letting $\om=(x-z)/|x-z|$, we have
\ben
D\zeta&=&(-2Er)\zeta \;\om,\qquad \zeta_t=-\zeta  \left( \frac{ aF e^{-at} }{ 1+Fe^{-at} } \right),\\
D^2\zeta &=&\tht \left[  \frac{\bt'}{r} \left( I-\om\otimes \om \right)+    \bt^{''} \om \otimes \om    \right]\\
&=& 2E\zeta \left( 2Er^2 \om\otimes \om-I \right).
\een
Thus,
\bea\label{thm1.1-5}
\Gamma_k[\zeta] 
&=& H\left(  -2E\zeta r \;\om, 2E\zeta\left( 2Er^2 \om\otimes \om-I \right)   \right)+ \zeta^{k}  \left(  \frac{ aF e^{-at} }{1+Fe^{-at} }  \right)   \nonumber \\
&=& (2E\zeta)^kr^{k-1}  H\left( \om, 2Er^2 \om\otimes \om-I\right) + \zeta^k \left(  \frac{ aF e^{-at} }{1+Fe^{-at} }  \right)     \nonumber\\ 
&=& \zeta^k  \left[  (2E)^k  r^{k-1}H\left( \om, 2Er^2 \om\otimes \om-I\right) +a \left(  \frac{ F e^{-at} }{1+Fe^{-at} }  \right)  \right].
\eea

By (\ref{thm1.1-2}), $\mathcal{R}\le r\le \mathcal{R+D}.$ We choose $E$ (see (\ref{sec2.7}) C(i)) so that
$$0<\kappa\equiv2E(\mathcal{R}+\mathcal{D})^2<1\quad\mbox{and}\quad J\equiv\max_{|\om|=1}H(\om, \kappa\; \om\otimes \om-I)<0.$$
Next, select
\be\label{thm1.1-50}
0<a< (2E)^k \mathcal{R}^{k-1} \left | J \right|.
\ee
With the above choice for $E$ and $a$, we get
\ben
\Gamma_K[\zeta]\le  \zeta^{k} \left[ a + (2E)^k r^{k-1} H\left( \om, \kappa\; \om\otimes \om-I\right)  \right] 
 \le \zeta^{k} \left[ a-  (2E)^k \mathcal{R}^{k-1} |J| \right] \le 0.
\een
With these values, $\zeta$ is a super-solution in $\Om_\infty$. Note that $E$ depends on $\mathcal{R}.$ $\quad\Box$ 
\vsp
Let $t\ge T$. Define in $\mQ_t$ and $\mS_t$,
\bea\label{thm1.1-0}
&&\mbox{(i)}\;\mu_{\inf}(t)=\inf_{\overline{\mQ}_t} u,\;\;\mbox{(ii)}\;\mu_{\sup}(t)=\sup_{\overline{\mQ}_t} u,  \;\;
\mbox{(iii)}\;\nu_{\inf}(t)=\inf_{\mS_t} h,\;\;\mbox{and}\;\; \mbox{(iv)}\;\nu_{\sup}(t)=\sup_{\mS_t} h.
\eea
Since $u=h$ on $P_\infty$, $\mu_{\inf}(t)\le \nu_{\inf}(t)$, and $\nu_{\sup}(t)\le \mu_{\sup}(t).$
Set 
\be\label{thm1.1-1}
\nu_{\sup}=\lim_{t\rightarrow\infty} \nu_{\sup}(t)\;\;\;\mbox{and}\;\;\;\nu_{\inf}=\lim_{t\rightarrow\infty} \nu_{\inf}(t).
\ee

\vsp
{\bf Proof of Part (a) of Theorem \ref{thm:1.1}}:  Recall the notation in (\ref{thm1.1-0}), and (\ref{thm1.1-1}).
 We take $k\ge 1$. Recall that $u>0$ is a super-solution,  and since (\ref{h(x,t)}) holds, $\mu_{\inf}(t)<\infty,\;\forall\;t>0.$
\vsp
Note that $\mu_{\inf}(t)\le \nu_{\inf}(t).$ Thus, the claim follows if we show that
$$\lim_{t\rightarrow \infty} \mu_{\inf}(t)\ge \nu_{\inf}.$$
\vsp
Recall that $u=h$ on $\mS_T$ and $u\ge \min\{\min_\Om u(x,T),\;m\}\equiv m_0$. Since $ \nu_{\inf}\ge \mu_{\inf}(t) \ge m_0$, if $\nu_{\inf}=m_0$, the claim follows. Assume from here on that
$\nu_{\inf}>m_0.$

Let $\ve>0$ be small, and $T_0\ge T$, large, so that for $t\ge T_0$ (see (\ref{thm1.1-1}))
$$\nu_{\inf}(t) \ge \nu_{\inf}-\ve>m_0>0.$$

Fix $z\in \IR^n\setminus \Om$; set
$$r=|x-z|,\quad \mathcal{R}=\inf_{x\in \Om} |x-z|\quad \mbox{and}\quad \mathcal{D}=\mbox{diam}\;\Om.$$
 We employ Auxiliary Function 1, see (\ref{thm1.1-3}), and recall the condition (\ref{thm1.1-4}):
$$\xi(x,t)=D e^{Er^2} \left( \frac{e^{a(t-T_0)} }{e^{a(t-T_0)}+F} \right), \quad\mbox{where}\quad 0<a< (2E)^k \mathcal{R}^{k-1} \min_{|\om|=1} H(\om, I).  $$

We select
\bea\label{thm1.1-6}
D=m_0,\quad E=\frac{1}{(\mathcal{R+D})^2}\log\left( \frac{\nu_{\inf}-\ve}{m_0} \right),\quad \mbox{and}\quad F=\frac{\nu_{\inf}-\ve}{m_0}-1.
\eea
Observe that $e^{E(\mathcal{R+D})^2}=1+F=(\nu_{\inf}-\ve)/m_0$.

\vsp

Our aim is to show that $u\ge \xi$ in $\mQ_{T_0}$.
Use (\ref{thm1.1-6}) and that $\mathcal{R}\le r\le \mathcal{R+D}$. Thus,
\bea\label{thm1.1-7}
\frac{m_0 \;e^{E\mathcal{R}^2} }{ 1+F}\le \xi(x,T_0)\le \frac{m_0 \;e^{E(\mathcal{R+D})^2} }{ 1+F}=m_0\le u(x,T_0),\;\;\forall \;x\in\Om,
\eea
and
\bea\label{thm1.1-8}
&&\xi(x,t)\le m_0\; e^{E (\mathcal{R+D})^2 }\left( \frac{ e^{a(t-T_0)} }{  e^{a(t-T_0)}+F}\right)\le \nu_{\inf}-\ve\le h(x,t),\;\;\forall (x,t)\in \mS_{T_0}.
\eea

Employing the comparison principle, $u\ge \xi$ in $\mQ_{T_0}$. Using (\ref{thm1.1-6}), we have
\ben
u(x,t)&\ge& m_0\; e^{Er^2} \left( \frac{ e^{a(t-T_0)} }{ e^{a(t-T_0) } +F } \right)
= m_0\;   \left( \frac{ \nu_{\inf}-\ve}{ \bar{m} }   \right)^{r^2/(\mathcal{R+D})^2}\left( \frac{e^{a(t-T_0)} }{e^{a(t-T_0)}+F} \right)\\
&\ge& m_0 \;   \left( \frac{ \nu_{\inf}-\ve}{m_0}  \right)^{\mathcal{R}^2/(\mathcal{R+D})^2}\left( \frac{e^{a(t-T_0)} }{e^{a(t-T_0)}+F} \right),\;\;\forall(x,t)\in \mQ_{T_0}.
\een
\vsp
Since $u(x,t)\ge \mu_{\inf}(t)\ge \inf_{\mQ_t}\xi,\;t\ge T_0$, we get that
$$\mu_{\inf}(t)\ge m_0\;   \left( \frac{ \nu_{\inf}-\ve}{ m_0 }   \right)^{\mathcal{R}^2/(\mathcal{R+D})^2}\left( \frac{e^{a(t-T_0)} }{e^{a(t-T_0)}+F} \right). $$ 

Letting $t\rightarrow \infty$, and then letting $\mathcal{R}\rightarrow \infty$,
$$\lim_{t\rightarrow \infty} \mu_{\inf}(t)\ge \nu_{\inf}-\ve.$$
The claim follows since the above is true for any small $\ve$. $\Box$
\vsp
{\bf Proof of Part (b):} We assume that $u$ is a sub-solution. Recall that 
$M=\sup_{\mS_T} h(x,t).$  Set $M_0=\max\{u(x,T),\; M\}.$ 
As noted in (\ref{bnds}), $u(x,t) \le M_0$ in $\mQ_T$. Since $\nu_{\sup}\le \mu_{\sup}(t)\le M_0$, if $\nu_{\sup}=M_0$, the statement follows.  

For the proof, we assume that
$\nu_{\sup}<M_0$ and we will show that $\lim_{t\rightarrow \infty} \mu_{\sup}(t)\le \nu_{\sup}$. 
\vsp
Let $\ve>0$, small, and $T_0>0$ be such that
\be\label{thm1.1-8}
\nu_{\sup}\le \nu_{\sup}(t)\le \nu_{\sup}+\ve<M_0,\quad \mbox{for any $t\ge T$.}
\ee
This ensures that $h(x,t)\le \nu_{\sup}+\ve$ on $\mS_{T_0}.$
\vsp
We employ the function in (\ref{thm1.1-40}): let $z\in \IR^n\setminus \Om$ and $r=|x-z|$. Define 
$$\zeta(x,t)=\zeta(r,t)=D e^{-Er^2} \left( 1+F e^{-a(t-T_0)}\right),\;\;\forall (x,t)\in \mQ_{T_0},$$
where $D,\; E, \;F$ and $a$ are positive constants. Recalling (\ref{thm1.1-2})
and (\ref{thm1.1-50}), we choose
\bea\label{thm1.1-9}
&&0<a< (2E)^k   \mathcal{R}^{k-1}  |J|,\quad\mbox{where} \quad J=\max_{|\om|=1}H(\om, \kappa \om\otimes \om -I)<0, \nonumber\\
&&\mbox{and}\quad \kappa\equiv 2E(\mathcal{R+D})^2<1.
\eea
Choose $\kappa>0$, small ($E$ small), so that $J<0$ (see (\ref{sec2.7}) C(i)), and as a result, $\zeta$ is a super-solution in $\mQ_{T_0}$. 
\vsp
For a fixed $\kappa$, we choose
\ben
D=e^{\kappa/2}(\nu_{\sup}+\ve),\quad E=\frac{\kappa}{ 2(\mathcal{R+D})^2} \quad \mbox{and}\quad F=\frac{ M_0 }{\nu_{\sup}+\ve}-1.
\een
Thus, in $\mQ_{T_0}$, 
\ben
\zeta(x,t)=(\nu_{\sup}+\ve) \exp\left( \frac{\kappa}{2} \left[ 1-  \frac{ r^2}{(\mathcal{R+D})^2} \right] \right) \left( 1+ Fe^{-a(t-T_0)} \right).
\een
\vsp
Observe that if $x\in \Om$ then $\mathcal{R}\le r\le \mathcal{R+D}$. Hence, by (\ref{thm1.1-8}),
\ben
\zeta(x,T_0)&\ge& (\nu_{\sup}+\ve) \left( \frac{M_0}{\nu_{\sup}+\ve} \right)\ge M_0\ge u(x,T_0),\;\;\forall x\in \Om,\quad\mbox{and}\\
\zeta(x,t)&\ge&(\nu_{\sup}+\ve) \exp\left( \frac{\kappa}{2} \left[ 1-  \frac{ r^2}{(\mathcal{R+D})^2} \right] \right) \ge \nu_{\sup}+\ve\ge h(x,t),\;\;\forall(x,t)\in \mS_{T_0}.
\een

Thus, $\zeta\ge u$ on the parabolic boundary of $\mQ_{T_0}$, and Theorem \ref{sec2.3} implies that $\zeta\ge u$ in $\mQ_{T_0}$. Thus, for any $s\ge t>T_0,$
$u(x,s)\le \zeta(x,s)$, and
$$\mu_{\sup}(t)\le \sup_{\mQ_{t}}\zeta\le (\nu_{\sup}+\ve) \exp\left( \frac{\kappa}{2} \left[ 1-  \frac{ \mathcal{R}^2}{(\mathcal{R+D})^2} \right] \right) \left( 1+ Fe^{-a(t-T_0)} \right),\;\;\mbox{in $\mQ_t$},$$
for any $t>T_0$.

Let $t\rightarrow \infty$ and then let $\mathcal{R}\rightarrow \infty$ to obtain that $\lim_{t\rightarrow \infty}\mu_{\sup}(t)\le \nu_{\sup}+\ve$. The claim holds. $\Box$

\vsp
\section{Proof of Theorem \ref{thm:1.2}} 
\vsp
Before presenting the proof, we record the following. See Appendix A.1 for existence and comparison principles. 

\begin{lem}\label{thm1.2-3} Suppose that $\Om\subset \IR^n$ is a bounded domain that satisfies an outer ball condition. Let $k\ge 1$, $\dl\ne 0$ and $\tht\in \IR.$ Then there is a $\psi$ in $C(\overline{\Om})$ such that 
$$ H(D\psi, D^2\psi)=\dl,\quad\mbox{in $\Om$, with $ \psi=\tht$ on $\p\Om$.} $$
If $\dl>0$ then $\psi\le \tht$, and if $\dl<0$ then $\psi \ge \tht$.
Also, $\psi=\tht+ |\dl|^{1/k} \eta(x)$, where $H(D\eta, D^2\eta)=\dl/|\dl|$, and $\eta=0$ on $\p\Om$.
\end{lem}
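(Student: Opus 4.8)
The plan is to solve the normalized problem first and then rescale using the homogeneity of $H$. Write $\sigma=\dl/|\dl|\in\{+1,-1\}$. The one substantive step is to produce $\eta\in C(\overline\Om)$ solving
$$H(D\eta,D^2\eta)=\sigma\quad\mbox{in }\Om,\qquad \eta=0\ \mbox{on }\p\Om ;$$
this is precisely the Dirichlet problem for the (possibly degenerate) elliptic operator $H$ with right-hand side $\pm1$, whose solvability is recorded in Appendix A.1. There it is obtained by Perron's method, the two inputs being (i) the comparison principle for $H(D\cdot,D^2\cdot)=c$ — which rests on Conditions A, B and C — and (ii) barriers at each point of $\p\Om$, built from the uniform outer ball condition together with the radial identity \eqref{sec2.5} and the coercivity in Condition C. The construction and verification of those barriers for a genuinely degenerate, fully nonlinear $H$ is where the real work sits, and I expect it to be the main obstacle; modulo Appendix A.1 the rest is bookkeeping.

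Granting $\eta$, set $\psi(x)=\tht+|\dl|^{1/k}\eta(x)$. Then $\psi\in C(\overline\Om)$ and $\psi=\tht$ on $\p\Om$, since $\eta=0$ there. As $D\psi=|\dl|^{1/k}D\eta$ and $D^2\psi=|\dl|^{1/k}D^2\eta$, with $|\dl|^{1/k}>0$ because $\dl\ne0$, the scaling relation \eqref{gamma} applied with $\vt=|\dl|^{1/k}$ gives, in the viscosity sense (multiplication by a positive constant and addition of a constant preserve the sub/super-solution property, exactly as in the change-of-variable argument of Subsection 2.2),
$$H(D\psi,D^2\psi)=\bigl(|\dl|^{1/k}\bigr)^k H(D\eta,D^2\eta)=|\dl|\,\sigma=\dl\quad\mbox{in }\Om .$$
This is the claimed solution, together with the displayed formula $\psi=\tht+|\dl|^{1/k}\eta$.

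For the sign of $\psi-\tht$ I would compare $\eta$ with the constant function $0$. By Condition A, $H(\wp,O)=0$, so $v\equiv0$ satisfies $H(Dv,D^2v)=0$; hence $v$ is a super-solution of $H(D\cdot,D^2\cdot)=1$ when $\sigma=1$, and a sub-solution of $H(D\cdot,D^2\cdot)=-1$ when $\sigma=-1$, and in both cases $v=\eta=0$ on $\p\Om$. The comparison principle of Appendix A.1 then yields $\eta\le0$ if $\dl>0$ and $\eta\ge0$ if $\dl<0$, whence $\psi\le\tht$ and $\psi\ge\tht$ respectively. This finishes the scheme; the only genuinely technical ingredient, beyond the homogeneity bookkeeping, is the existence/comparison machinery for the degenerate elliptic Dirichlet problem supplied by Appendix A.1.
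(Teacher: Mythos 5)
Your proposal is correct and follows essentially the same route as the paper: existence is obtained from the Perron construction with outer-ball barriers built from \eqref{sec2.5} and Condition C — which is exactly how Appendix A.1 proves this lemma — and the formula $\psi=\tht+|\dl|^{1/k}\eta$ is the same homogeneity rescaling via \eqref{gamma}. The only cosmetic deviation is the sign of $\psi-\tht$: the paper reads it off the Perron sandwich ($v_y\le u\le \tht$ when $\dl>0$, resp.\ $\tht\le u\le w_y$ when $\dl<0$), whereas you re-derive it by comparing $\eta$ with the constant $0$ through Corollary \ref{cmp}, which is legitimate there since the two right-hand sides are $0$ and $\pm 1$, so the hypothesis $|s|+|t|>0$, $s\le t$ of that corollary is met.
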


{\bf Proof of Theorem \ref{thm:1.2}.} For parts (a) and (b), we assume that $\nu>0$ and $k>1$. Part (c) addresses the cases $\nu=0$ and $k\ge 1$. Also, we include a comment about $k=1$. 
\vsp
{\bf Proof of Part (a):} Assume that $u>0$ is a sub-solution and $u=\nu$ on $\mS_{T}$. 

Let $\ve>0$ be small. By Theorem \ref{thm:1.1}, there is a $T_0\ge T$ such that
\be\label{thm1.2-1}
 \nu\le \sup_{x\in \overline{\Om}}u(x,t)\le \nu+\ve,\quad\mbox{for any $t\ge T_0$.} 
\ee

By Lemma  \ref{thm1.2-3}, there is a function $\psi\ge 1$ in $C(\Om)$ such that
\be\label{thm1.2-2}
H(D\psi,D^2\psi)=-1\quad\mbox{in $\Om$ and $\psi=1$ on $\p\Om$.}
\ee 
Observe that $\psi\ge 1$ in $\Om$. 
\vsp
Let $T_1\ge T_0$, to be determined later. With $\psi$ as in (\ref{thm1.2-2}), set in $\mQ_{T_1}$,
$$\phi(x,t)=\nu+ \ve \psi(x) \tau(t)\quad\mbox{in $\mQ_{T_1}$,}\quad\mbox{where}\quad \tau(t)=\left( \frac{T_1}{t} \right)^{1/(k-1)}. $$
Define $M_\psi=\sup_{\overline{\Om}}\psi$. Clearly, 
\be\label{thm1.2-5}
1\le \psi\le M_\psi \quad\mbox{and}\quad \nu\le \phi \le \nu+ \ve M_\psi.
\ee
\vsp
Using that $\tau\le 1$, $\tau^{\prime}=-\tau/ [ (k-1) t ]$ and (\ref{thm1.2-2}), 
\ben
\G_k[\phi]=H(D\phi, D^2\phi)-\phi^{k-1}\phi_t&=& -\left[ \ve \tau \right]^k+  \phi^{k-1}     \left( \frac{\ve \psi}{k-1} \right) \left(\frac{\tau}{t}\right).
\een
Since $\tau^{k-1}=T_1/t$, using (\ref{thm1.2-5}), 
\ben
\G_k[\phi]= \ve \tau \left[  \frac{\psi\phi^{k-1} }{(k-1)t}  -[ \ve \tau]^{k-1}  \right] 
\le \frac{\ve \tau}{t}  \left[  \frac{ M_\psi\left( \nu+\ve M_\psi  \right)^{k-1}}{ k-1 } -  \ve^{k-1} T_1  \right].
\een

Hence, $\phi$ is super-solution in $\mQ_{T_1}$ if  
$$T_1\ge \max\left\{ \frac{M_\psi \left( \nu+ \ve M_\psi  \right)^{k-1}  } { (k-1) \ve^{k-1} },\;\;T_0 \right\}.$$

Next, from (\ref{thm1.2-1}) and (\ref{thm1.2-5}), 
$$u(x,T_1)\le \nu+\ve \le \phi(x, T_1)\;\;\mbox{and}\;\;u(x,t)=\nu\le \phi(x,t),\;\forall(x,t)\in \mathcal{S}_{T_1}. $$

By the comparison principle in Theorem \ref{sec2.3} and (\ref{thm1.2-1}),
$$\nu\le \sup_{\Om}u(x,t)\le \sup_{\Om}\phi(x,t)\le \nu+ \frac{\ve M_\psi T_1^{1/(k-1)} }{  t^{1/(k-1)} }=\nu+\frac{ K}{t^{1/(k-1)} }\quad\mbox{in $\mQ_{T_1}$,}$$  
where $K=K(k, \nu,T, M_\psi)$. Thus, 
$$\limsup_{t\rightarrow \infty}   \left[ t^{\al} \left(  \sup_{\Om}u(x,t)-\nu \right)  \right]= 0,\quad\mbox{for any}\;\;0<\al<\frac{1}{k-1}.$$
The claim holds. $\Box$
\vsp

{\bf Proof of Part (b):} We assume that $u>0$ is a super-solution.

In Lemma \ref{thm1.2-3}, take $\dl=1 $ and $\tht=-1$. Let $\psi$ be the solution.
 Set $M_\psi=\max_{\overline{\Om}} |\psi|;$ thus, 
$$-M_\psi\le \psi\le -1.$$

Define
\be\label{thm1.2-8}
T_\ve= \frac{\nu^{k-1}M_\psi} { (k-1)\ve^{k-1} },\quad\mbox{where}\;\; 0<\ve\le \ve_0\quad\mbox{and}\quad \nu-\ve_0 M_\psi>0.
\ee
Fix $0<\ve\le \ve_0$, small so that $T_\ve\ge T$. 

By Theorem \ref{thm:1.1}, let $T_0\ge T_\ve$ be such that
\be\label{thm1.2-60}
0<\nu-\ve\le \inf_{\overline{\Om}} u(x,t)\le \nu,\quad \forall (x,t)\in \mQ_{T_0}.
\ee

Set
$$\phi(x,t)=\nu+ \ve \psi(x) \left( \frac{T_0}{t} \right)^{1/(k-1)}=\nu- \ve |\psi(x)| \left( \frac{T_0}{t} \right)^{1/(k-1)},\quad \forall (x,t)\in \mQ_{T_0}.$$
By (\ref{thm1.2-8}), $0<\phi\le \nu$. Also, since $\psi\le -1$,
\be\label{thm1.2-70}
\phi(x,T_0)\le \nu-\ve,\;\;\mbox{in $\Om$, \;\;and}\;\;\;\phi(x,t)\le \nu\;\; \mbox{in $\mS_{T_0}$}.
\ee

Since $H(D\psi, D^2\psi)=1$, $\psi\le 0$, $0<\phi\le \nu$ and $T_0\ge T_\ve$, we have that
\ben
\G_k[\phi] &=& \ve^k \left( \frac{T_0}{t}\right)^{k/(k-1)} + \phi(x,t)^{k-1}    \left( \frac{\ve \psi}{k-1} \right) \frac{T_0^{1/(k-1) } }{ t^{k/(k-1)} }  \\
&\ge & \frac{\ve T_0^{1/(k-1)} }{ t^{k/(k-1)} }\left( \ve^{k-1} T_0- \frac{\nu^{k-1} M_\psi }{k-1}  \right)\ge 0.
\een
The last line follows from (\ref{thm1.2-8}).
\vsp
Since $\phi$ is sub-solution in $\mQ_{T_0}$ and, by (\ref{thm1.2-5}), $u\ge \phi$ on its parabolic boundary, using Theorem \ref{sec2.3}, we obtain that
$$u(x,t)\ge \phi(x,t)=  \nu+ \ve \psi(x) \left( \frac{T_0}{t} \right)^{1/(k-1)},\;\;\;\forall (x,t)\in \mQ_{T_0}.$$
Observe that $\inf_{\Om}\phi(x,t)\le \inf_{\Om} u(x,t)\le \nu$. 

If $0<\s<1/(k-1)$ we have
\ben
\liminf_{t\rightarrow \infty} \left[  t^{\s}\left( \inf_{\overline{\Om}} u(x,t) -\nu \right)  \right]= 0.
\een 
This proves the claim.  $\Box$

\vsp
{\bf Comment:} Let $\nu>0$ and $k=1$. Since $H(e, X)=H(X)$, $u$ is a sub-solution of
$$ H(D^2u)-u_t=0,\quad\mbox{in $\Om_\infty$}.$$ 
Clearly, $v\equiv u-\nu$  is a sub-solution and $v=0$ on $\Om\times [T,\infty)$. Parts (a) and (b) do not apply as $k=1$. The decay rate of $v$ turns out be exponential in $t$. See Part (c) below. $\Box$
\vsp
{\bf Proof of Part (c):} Let $\nu=0$ and $k\ge 1.$ We continue to assume that $u\ge 0$ in $\Om_\infty$. 
Let $T_0\ge 0$ be such that $h(x,t)=0$ on $\mS_{T_0}$. Define
$$\mathcal{M}=\sup_{\overline{\Om}} u(x,T_0).$$

We refer to Appendix A.2 for details,and in particular the definition of $\lamo$. Choose $\lam<\lamo$, close to $\lamo$. Let $\psi_\lam=\psi_\lam(x)>0$ solve
\be\label{thm1.2-6}
H(D\psi_\lam, D^2\psi_\lam)+\lam \psi_\lam^{k}=0\;\;\mbox{in $\Om$ and $\psi_\lam=\mathcal{M}$ on $\p\Om$.}
\ee
Set
$$\phi_\lam(x,t)=e^{-\lam (t-T_0)} \psi_\lam(x)\;\;\;\mbox{in $\mQ_{T_0}$}.$$

Since $\psi_\lam\ge \mathcal{M}$,  
$$\phi_\lam(x,T_0)\ge u(x,T_0),\;\;\forall x\in \Om\quad\mbox{and}\quad \phi_\lam(x,t)>u(x,t),\;\;\forall(x,t)\in \mS_{T_0}.$$
Also, (\ref{thm1.2-6}) yields
$$H(D\phi_\lam,D^2\phi_\lam)-\phi_\lam^{k-1} (\phi_\lam)_t =e^{-\lam k (t-T_0)}  \psi_\lam^{k} (   \lam -  \lam)=0.$$

The comparison principle in Lemma \ref{min3} in $\mQ_{T_0}$ implies that for any $T>T_0$,
$$0\le u(x,t)\le \phi_\lam= \psi_\lam(x) e^{-\lam (t-T_0)}\;\;\;\;\mbox{in $\Om\times (T_0,T)$.}$$
Clearly, the above holds in any large $T$ and so the estimate holds in $\mQ_{T_0}$. Thus, for any $t\ge T_0$,
$$\sup_{\Om} u(x,t)\le \max_{\Om} \psi_\lam(x) e^{-\lam(t-T_0)}.$$ 
Applying logarithm to both sides and letting $t\rightarrow \infty$, we obtain
$$\lim_{t\rightarrow \infty} \left( \frac{\sup_{\Om} \log u }{t} \right)\le -\lam.$$
The statement in the theorem now holds as $\lam<\lamo$ is arbitrary. $\Box$

 To see that the above may not hold for super-solutions, consider the classical heat equation $\Delta u-u_t=0$. If we take $(u(x),\lambda_1)$ to be the first eigenfunction, eigenvalue pair of $\Delta$, with $u>0$, and define $u(x,t)=u(x)$, we get that $\Delta u-u_t= -\lambda_1 u\le 0$ and $u=0$ on $\p\Om\times(0,\infty)$. It is well-known that $u\in C^\infty$and is a viscosity solution. Clearly, $u$ does not decay in $t$.

\appendix
 \section{Existence for the auxiliary elliptic problem and the Eigenvalue Problem}
 \vsp
 
 We begin with a version of the comparison principle that will be used in this section. Let $\Om\subset \IR^n$ be a bounded domain. We recall a result proven in \cite{BM4}.

\begin{lem}\label{Comp:BM4} Let $f_i:\Om\times\IR\rightarrow \IR$, $i=1,2$, be continuous as in \eqref{sec2.2}. Suppose that $u\in usc(\overline{\Om})$ and $v\in lsc(\overline{\Om})$ are solutions to
$$H(Du,D^2 u) \ge f_1(x,u(x))\;\;\;\mbox{and}\;\;\;H(Dv,D^2 v)  \le f_2(x,v(x)),\;\;\mbox{in $\Om$}.$$
If $\sup_\Om(u-v)>\sup_{\p\Om} (u-v)$ then there is a point $z\in \Om$ such that 
$$(u-v)(z)=\sup_\Om (u-v){\mbox{ and }}
f_1(z, u(z))\le f_2(z, v(z)).$$
\end{lem}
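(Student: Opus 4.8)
The argument is the doubling-of-variables comparison; since $H$ depends only on $(Du,D^2u)$ and carries no explicit dependence on $x$ or on the unknown, only Condition A will be needed for it. As $u-v\in usc(\overline{\Om})$ and $\overline{\Om}$ is compact, $M:=\sup_{\overline{\Om}}(u-v)$ is finite and attained. The hypothesis $\sup_\Om(u-v)>\sup_{\p\Om}(u-v)$ forces $M=\sup_\Om(u-v)$ and, moreover, that \emph{every} maximiser of $u-v$ lies in $\Om$, since a point $\hat z\in\p\Om$ would give $(u-v)(\hat z)\le\sup_{\p\Om}(u-v)<M$.

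For $\ve>0$ put $\Phi_\ve(x,y)=u(x)-v(y)-|x-y|^2/(2\ve)$ on $\overline{\Om}\times\overline{\Om}$ and let $(x_\ve,y_\ve)$ maximise it. By the standard penalization lemma [\cite{CIL}: Lemma 3.1], along a subsequence $\ve\to0$ we have $x_\ve,y_\ve\to\hat z$ with $\hat z$ a maximiser of $u-v$, $|x_\ve-y_\ve|^2/\ve\to0$, $u(x_\ve)\to u(\hat z)$ and $v(y_\ve)\to v(\hat z)$. By the previous paragraph $\hat z\in\Om$, so $x_\ve,y_\ve\in\Om$ for all small $\ve$, and the theorem on sums [\cite{CIL}: Theorem 3.2] applied to $\Phi_\ve$ at $(x_\ve,y_\ve)$ (with parameter $\lambda=\ve$) furnishes $X_\ve,Y_\ve\in S^n$ with $(p_\ve,X_\ve)\in\overline{J}^{2,+}u(x_\ve)$ and $(p_\ve,Y_\ve)\in\overline{J}^{2,-}v(y_\ve)$, where $p_\ve=(x_\ve-y_\ve)/\ve$, together with a matrix inequality which, tested against vectors of the form $(\xi,\xi)$, gives $X_\ve\le Y_\ve$.

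Since $(p_\ve,X_\ve)\in\overline{J}^{2,+}u(x_\ve)$, the subsolution property of $u$ gives $H(p_\ve,X_\ve)\ge f_1(x_\ve,u(x_\ve))$, and, likewise, $H(p_\ve,Y_\ve)\le f_2(y_\ve,v(y_\ve))$; by Condition A (degenerate ellipticity) and $X_\ve\le Y_\ve$ we have $H(p_\ve,X_\ve)\le H(p_\ve,Y_\ve)$. Hence $$f_1(x_\ve,u(x_\ve))\le H(p_\ve,X_\ve)\le H(p_\ve,Y_\ve)\le f_2(y_\ve,v(y_\ve)).$$ Letting $\ve\to0$ along the subsequence and using the continuity of $f_1$ and $f_2$ with $x_\ve,y_\ve\to\hat z$, $u(x_\ve)\to u(\hat z)$ and $v(y_\ve)\to v(\hat z)$, we obtain $f_1(\hat z,u(\hat z))\le f_2(\hat z,v(\hat z))$. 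Taking $z=\hat z$ completes the proof, since $(u-v)(z)=M=\sup_\Om(u-v)$.

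\textbf{Main obstacle.} The content is concentrated in two standard but essential inputs. First, the penalization maximisers $(x_\ve,y_\ve)$ must converge to a \emph{genuine} maximiser of $u-v$ \emph{with matching function values}; this is where semicontinuity, compactness and the inequality $\Phi_\ve(x_\ve,y_\ve)\ge(u-v)(\hat z)$ enter, and it is what licenses passage to the limit inside $f_1,f_2$. Second, one needs the bound $X_\ve\le Y_\ve$: the naive product test function $|x-y|^2/(2\ve)$ alone only produces $X_\ve=\ve^{-1}I$ and $Y_\ve=-\ve^{-1}I$, which is useless, so the theorem on sums is indispensable — and the crucial structural feature is that the \emph{same} vector $p_\ve$ enters $H$ in both viscosity inequalities, so that monotonicity of $H$ in the matrix slot alone closes the estimate.
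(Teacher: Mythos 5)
Your proof is correct: the hypothesis indeed forces every maximiser of $u-v$ to be interior, and the doubling-of-variables argument with the theorem on sums from \cite{CIL} (same gradient $p_\ve$ in both viscosity inequalities, $X_\ve\le Y_\ve$, Condition A, then semicontinuity to pass $u(x_\ve)\to u(\hat z)$, $v(y_\ve)\to v(\hat z)$ inside the continuous $f_1,f_2$) closes the estimate. The paper itself gives no argument here, only the citation to Theorem 4.1 of \cite{BM4}, and your proof is the standard comparison machinery that such a reference is understood to invoke, so it is essentially the same approach.
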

\begin{proof} A proof can be worked out as in Theorem 4.1 in [\cite{BM4}: Section 4]. \end{proof}

\begin{cor}\label{cmp}(Comparison Principle) Suppose that $s,\; t\in \IR$ are such that $|s|+| t | >0$, and $s\le t.$ 
Let $u\in usc(\overline{\Om})$ and $v\in lsc(\overline{\Om})$ satisfy  
$$H(Du,D^2 u) \ge t,\quad\mbox{and}\quad H(Dv,D^2 v) \le s \quad \mbox{in $\Om$.}$$
Then $u-v\le \sup_{\p\Om}(u-v).$ 
\end{cor}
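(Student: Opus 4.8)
The plan is to deduce Corollary \ref{cmp} directly from Lemma \ref{Comp:BM4} by choosing the functions $f_1$ and $f_2$ appropriately. Set $f_1(x,\mu)\equiv t$ and $f_2(x,\mu)\equiv s$ for all $(x,\mu)\in\Om\times\IR$; these are continuous and trivially non-decreasing in the second variable, so they satisfy the monotonicity hypothesis \eqref{sec2.2} required in Lemma \ref{Comp:BM4}. By hypothesis $u$ and $v$ satisfy $H(Du,D^2u)\ge t=f_1(x,u(x))$ and $H(Dv,D^2v)\le s=f_2(x,v(x))$ in $\Om$, so Lemma \ref{Comp:BM4} applies.

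Next I would argue by contradiction. Suppose $\sup_\Om(u-v)>\sup_{\p\Om}(u-v)$. Then Lemma \ref{Comp:BM4} produces a point $z\in\Om$ with $(u-v)(z)=\sup_\Om(u-v)$ and $f_1(z,u(z))\le f_2(z,v(z))$, i.e. $t\le s$. Combined with the standing assumption $s\le t$ this forces $s=t$. But then $|s|+|t|=2|s|>0$ gives $s=t\ne 0$, and I need to extract a contradiction from the constant operator equations $H(Du,D^2u)\ge t$ and $H(Dv,D^2v)\le t=s$ with $t\ne0$. The cleanest way is to invoke Lemma \ref{Comp:BM4} once more, this time with $f_1(x,\mu)=f_2(x,\mu)\equiv t$: at the maximizing point $z$ we would have equality $t\le t$, which is not a strict contradiction, so this route needs a small additional observation. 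Instead, I would apply Lemma \ref{Comp:BM4} with a strictly shifted right-hand side: since $s=t$, pick any $t'$ with $s<t'<t$ is impossible, so rather take $f_2(x,\mu)\equiv s$ and note that $u$ also solves $H(Du,D^2u)\ge t>s$ when $t\ne s$ — but here $t=s$.

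Let me restructure: the honest argument treats two cases. If $t>s$, then $f_1(z,u(z))=t>s=f_2(z,v(z))$ directly contradicts the conclusion $f_1(z,u(z))\le f_2(z,v(z))$ of Lemma \ref{Comp:BM4}, so $\sup_\Om(u-v)\le\sup_{\p\Om}(u-v)$ as desired. If $t=s$, then since $|s|+|t|>0$ we have $t=s\ne0$; replace $s$ by $s'=s-\kappa$ for a small $\kappa>0$ of the same sign as making $s'<t$ — concretely, if $t=s>0$ set $t'=t/2\in(0,t)$ and observe $v$ still satisfies $H(Dv,D^2v)\le s=t>t'$, no; the monotone direction is the issue. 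The correct reduction: when $t=s\ne0$, note $H(D(v),D^2 v)\le t$ and consider instead comparing $u$ against $v$ using $f_1\equiv t$, $f_2\equiv t$; Lemma \ref{Comp:BM4} then yields $z$ with $t\le t$, no contradiction — so one genuinely must perturb. Perturb $u$: for $\eta>0$ small, $u_\eta:=u+\eta(\text{something subharmonic})$... this is where the real work lies.

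The main obstacle, then, is the borderline case $t=s\ne0$, where Lemma \ref{Comp:BM4} alone gives only a non-strict inequality. I expect the intended resolution is that the paper's Lemma \ref{Comp:BM4} is being applied with the strict hypothesis $s<t$ guaranteed after a perturbation, or that the statement of Corollary \ref{cmp} is really used only with $s<t$ or via first reducing $\eta$; alternatively one uses the homogeneity of $H$ (Condition B) to rescale. Concretely, when $t=s>0$, set $w=\theta v$ for $\theta>1$ close to $1$; by Condition B, $H(Dw,D^2w)=\theta H(Dv,D^2v)\le\theta s$, and $\theta s>s=t$, while $w\ge v$ on $\p\Om$ up to a controlled error, so comparing $u$ against $w$ with $f_1\equiv t<\theta s\equiv f_2$ falls into the strict case; then let $\theta\to1^+$. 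The symmetric rescaling handles $t=s<0$ (rescale $u$). So the plan is: (i) reduce to Lemma \ref{Comp:BM4} with constant $f_i$; (ii) handle $t>s$ immediately; (iii) handle $t=s\ne0$ by a homogeneity rescaling of $u$ or $v$ to recreate a strict gap and pass to the limit. The limiting argument in step (iii), keeping track of boundary values, is the only delicate point and the one I would write out in full.
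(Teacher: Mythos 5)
Your reduction to Lemma \ref{Comp:BM4} with constant $f_1\equiv t$, $f_2\equiv s$, and your immediate treatment of the case $s<t$, is exactly the paper's argument. The genuine problem is your step (iii), the borderline case $t=s\neq 0$: the concrete perturbation you settle on goes in the wrong direction. With $t=s>0$ and $w=\theta v$, $\theta>1$, Condition B gives $H(Dw,D^2w)\le \theta^{k}s$ (the factor is $\theta^{k}$, not $\theta$, but that is immaterial); since $\theta^{k}s>s=t$, you are now comparing a subsolution of level $t$ against a supersolution of level $\theta^{k}s>t$. In the notation of Lemma \ref{Comp:BM4} this means $f_1<f_2$, so the conclusion $f_1(z,u(z))\le f_2(z,v(z))$ at an interior maximum point is no contradiction at all: the strict configuration that rules out an interior maximum is $f_2<f_1$, i.e.\ the supersolution's right-hand side strictly \emph{below} the subsolution's, which is exactly what the hypothesis $s<t$ encodes. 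Raising the supersolution's level weakens rather than strengthens the comparison, so as written step (iii) proves nothing and the argument does not close; your earlier hesitations in the middle of the proposal reflect this same reversal of the inequality.

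The repair is to create the gap in the correct direction: either scale the supersolution \emph{down} ($w=\theta v$ with $0<\theta<1$ when $s>0$, giving $H(Dw,D^2w)\le\theta^{k}s<s=t$), or, as the paper does, scale the subsolution: set $u_\theta=\theta u$ with $\theta>1$ if $t>0$ and $0<\theta<1$ if $t<0$, so that $H(Du_\theta,D^2u_\theta)=\theta^{k}H(Du,D^2u)\ge \theta^{k}t>t=s$; the strict case then yields $u_\theta-v\le\sup_{\p\Om}(u_\theta-v)$, and one lets $\theta\to1$. Scaling $u$ rather than $v$ also makes the boundary term in the limit unproblematic and removes your ``$w\ge v$ on $\p\Om$ up to a controlled error'' concern, for which no sign information on $v$ is available in the statement.
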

\begin{proof} Consider $s<t.$ By taking $f_1=t$ and $f_2=s$, Lemma \ref{Comp:BM4} implies that $u-v\le \sup_{\p\Om}(u-v)$.

Assume now that $t=s$. We take $\tht>1$ if $t>0$, and $0<\tht<1$ if $t<0$. The function $u_\tht=\tht u$ solves
$H(Du_\tht, D^2u_\tht)=\tht^k H(Du, D^2u)\ge t\tht^k>s$. Thus, 
$$u_\tht-v\le \sup_{\p\Om}(u_\tht-v).$$
The conclusion follows by letting $\tht \rightarrow 1.$ 
\end{proof}

We obtain also the following quotient form of the comparison principle, see [\cite{BM4}: Theorem 1.2, see Sections 1 and 5]. 

\begin{lem}\label{Quo:BM4} Let $\lam>0$. Suppose that $u\in usc(\overline{\Om})$, $u>0$, and $v\in lsc(\overline{\Om})$, $v>0$ in $\overline{\Om}$, are solutions to
$$H(Du,D^2 u) +\lam u^k\ge 0 \;\;\;\mbox{and}\;\;\;H(Dv,D^2 v)+\lam v^k\le 0,\;\;\mbox{in $\Om$}.$$
Then $u/v\le \sup_{\p\Om} (u/v).$
\end{lem}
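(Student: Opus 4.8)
The plan is to remove the zeroth-order term $\lam u^{k}$ by the logarithmic change of variables, thereby reducing the quotient estimate to a comparison principle --- with one and the same constant right-hand side on both sides --- for the shifted operator $\bar H(\wp,X):=H(\wp,X+\wp\otimes\wp)$; this last comparison is precisely the content of \cite{BM4} (Theorem 1.2, see Section 5). The substitution rests only on the joint homogeneity \eqref{gamma} of $H$ and on the elliptic counterpart of the change-of-variable formula recalled in Subsection 2.2 (see \cite{BM4}, Section 5), so the bulk of the work is the transformed comparison.

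Concretely, I would set $w=\log u$ and $\psi=\log v$; these are well defined on $\overline{\Om}$ since $u,v>0$ there, with $w\in usc(\overline{\Om})$ and $\psi\in lsc(\overline{\Om})$ (composition of a semicontinuous function with the increasing map $\log$). Using $Du=u\,Dw$ and $D^{2}u=u(D^{2}w+Dw\otimes Dw)$ together with \eqref{gamma} (legitimate because $u>0$), a routine test-function argument shows that $w$ is a viscosity subsolution and $\psi$ a viscosity supersolution of
$$H\bigl(Dz,\;D^{2}z+Dz\otimes Dz\bigr)=-\lam\qquad\text{in }\Om .$$
Invoking the comparison principle for this equation from \cite{BM4} gives $w-\psi\le\sup_{\p\Om}(w-\psi)$ on $\overline{\Om}$. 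Since $\exp$ is continuous and increasing and $e^{\,w-\psi}=u/v$, exponentiating yields
$$\frac{u}{v}=e^{\,w-\psi}\le\exp\Bigl(\sup_{\p\Om}(w-\psi)\Bigr)=\sup_{\p\Om}e^{\,w-\psi}=\sup_{\p\Om}\frac{u}{v},$$
which is the assertion.

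The genuinely nontrivial step is the comparison for $\bar H$ with equal right-hand sides $-\lam$. A direct doubling-of-variables argument in the spirit of Lemma \ref{Comp:BM4} produces, at an interior maximum of $w-\psi$ and using the monotonicity (Condition A) of $X\mapsto\bar H(\wp,X)$, only the inequality $-\lam\le-\lam$, which is no contradiction: $\bar H$ is merely degenerate elliptic, so strictness cannot be extracted from the Hessian comparison, and the rescaling device used for Corollary \ref{cmp} is not available here, since a constant shift of $w$ (the image under $\log$ of a dilation of $u$) leaves $Dw$ and $D^{2}w$ unchanged. What rescues the argument, and is carried out in \cite{BM4}, is that the gradient now enters $\bar H$ superlinearly through $Dz\otimes Dz$; this coercivity --- a consequence of Condition C --- permits one to separate the two sides and force the maximum of $w-\psi$ onto $\p\Om$. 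I note in passing that one cannot bypass this by feeding time-independent (or monotone-in-$t$) profiles built from $u$ and $v$ into the parabolic quotient comparison of Theorem \ref{sec2.3}: any such profile agrees at its initial time slice with $u$ and $v$ up to a common positive factor, so the parabolic boundary $P_{T}$ already sees all of $\Om$ and the comparison returns nothing.
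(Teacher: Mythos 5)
Your proposal is correct and takes essentially the same route as the paper: the paper offers no argument here beyond citing [\cite{BM4}: Theorem 1.2, Sections 1 and 5], and your reduction --- $w=\log u$, $\psi=\log v$, comparison for $H(Dz,D^2z+Dz\otimes Dz)$ with equal right-hand sides $-\lam$, then exponentiation --- is exactly the mechanism the paper itself invokes for the parabolic analogue in Theorem \ref{sec2.3} and the one carried out in the cited reference. Deferring the genuinely hard step (the comparison for the transformed operator) to \cite{BM4} is consistent with the paper, which defers the entire lemma to that same source.
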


\vsp

 \subsection{Existence for Lemma \ref{thm1.2-3} }
 
  Let $\dl>0$  and $\tht\in \IR$. In this appendix we show existence of viscosity solutions to the following problems by using the Perron method.
\bea\label{Ap:eq1}
&&\mbox{(a)} \; H(Du, D^2u)=\dl,\;\mbox{in $\Om$,}\; u=\tht \mbox{ on } \p \Om,\;\mbox{and} \nonumber\\
&&\mbox{(b)} \; H(Du,D^2u)=-\dl,\;\mbox{in $\Om$,} \; u=\tht \mbox{ on } \p \Om.
\eea

We construct suitable sub-solutions and super-solutions. Corollary \ref{cmp} provides the necessary comparison principle.
Define
\be\label{diam}
d=\mbox{diam}(\Om).
\ee

Observe that for any $y\in \p\Om$, there is a $\rho>0$ and a $q\in \IR^n\setminus \Om$ such that 
\be\label{OB}
B_{\rho}(q)\subset \IR^N\setminus \Om\quad\text{and}\quad y\in \p\Om\cap \overline{B}_{\rho}(q).
\ee

\vsp
{\bf Sub and Super solutions to (\ref{Ap:eq1})(a):}  We note that, for any $\tht$, $w(x)=\tht$ is a super-solution of (\ref{Ap:eq1})(a). Our effort is to construct sub-solutions.
\vsp 
Let $y\in \p\Om$. With $d$ as in (\ref{diam}), and $\rho$ and $q_y$ as in (\ref{OB}), set $r=|x-q|.$ Define
$$v_y(x)=\tht+ E \left( \frac{1}{r^\al}-\frac{1}{\rho^\al} \right),\;\;\forall x\in \Om.$$
where $E>0$ and $\al>0$ are to be determined. Using (\ref{sec2.5}), we get, in $r\ge \rho$,
\bea\label{Ap-1}
H(Dv_y, D^2v_y)&=&E^kH\left( \frac{-\al}{r^{\al+1} } e,  \frac{-\al}{r^{\al+2} }(I-e\otimes e) +\frac{ \al(\al+1) }{ r^{\al+2} } e\otimes e   \right)   \nonumber \\
&=&   \frac{   (E\al)^k }{ r^{\al k+k+1} }  H\left( e,  (\al+2) e\otimes e-I   \right).
\eea  

\vsp
Setting $\Lam=\al+2$, and recalling (\ref{mM}) and Condition C(ii) in Section 1 (see (\ref{sec2.7})), 
$$\min_{|e|=1}H(e, \Lam e\otimes e-I)\ge -M(\Lam)>0,\quad\text{if $\Lam>\Lam_1$}.$$
Choose $\Lam>\Lam_1$ and $\al=\Lam-2.$ Next, observing that if $x\in \Om$ then $\rho\le r\le \rh+d$, (\ref{Ap-1}) yields in $\Om$,
\ben
H_k[v_y]\ge \frac{ (E\al)^k |M(\Lam)| }{(\rho+d)^{k\al+k+1} }>0.
\een
\vsp
We now select $E$ such that
$$ \frac{ (E\al)^k |M(\Lam)| }{(\rho+d)^{k\al+k+1} }\ge \dl.$$

With this choice, we obtain that
\ben
H(Dv_y, D^2v_y)\ge \dl ,\quad v_y(y)=\tht, \quad\mbox{and}\quad v_y\le \tht\;\;\mbox{on $\p\Om.$}
\een

For every $y\in \p\Om$, we have constructed a sub-solution $v_y$
that attains the boundary value $\tht$ at $y$. The Perron Method leads to a solution $v_y\le u\le w=\tht$ of
(\ref{Ap:eq1})(a). 

\vsp

{\bf Sub and Super solutions to (\ref{Ap:eq1})(b):} Observe that $v(x)=\tht$ is a sub-solution. Our effort is to construct super-solutions. 
\vsp
Let $y\in \p\Om$. With $d$ as in (\ref{diam}), and $\rho$ and $q$ as in (\ref{OB}), set $r=|x-q|.$ Define
$$w_y(x)=\tht+ E \left( \frac{1}{\rho^\al}-\frac{1}{r^\al} \right),\;\;\forall x\in \Om,$$
where $E>0$ and $\al>0$ are to determined. Using (\ref{sec2.5}), we get, in $r>0$,
\ben
H(Dw_y, D^2w_y)&=&E^k H\left( \frac{\al}{r^{\al+1} } e,  \frac{\al}{r^{\al+2} } (I-e\otimes e) - \frac{\al(\al+1) }{ r^{\al+2} } e\otimes e \right)\\
&=& \frac{ (E\al)^k}{ r^{\al k +k+1} } H( e, I- (\al+2) e\otimes e).
\een

Set $\Lam=\al+2$. Recalling (\ref{mM}) and Condition C(ii), we see that
$$\max_{|e|=1} H(e, I-\Lam e\otimes e)\le M(\Lam)<0,$$
if $\Lam>\Lam_1$. Choose $\Lam>\Lam_1$ and $\al>\Lam-2.$ Since, $\rho\le r\le \rho+d$, we see that
$$H(Dw_y, D^2w_y)= \frac{ (E\al)^k}{ r^{\al k +k+1} } H( e, I- (\al+2) e\otimes e)\le \frac{ (E\al)^k M(\Lam) }{ (\rho+d)^{\al k +k+1} }<0.$$ 
Choose $E>0$ such that
$$\frac{ (E\al)^k |M(\Lam)| }{ (\rho+d)^{\al k +k+1} }\ge \dl.$$

Thus,
$  H(Dw_y, D^2w_y)\le -\dl,\;\mbox{in $\Om$,}\quad \bar{w}_y(y)=\tht,\quad\mbox{and}\quad w_y\ge \tht\;\mbox{on $\p\Om.$}$\\
By the Perron method, there is a solution $u$ such that $\tht=v\le u\le w_y$.   $\Box$

Next, we discuss the results needed for Theorem \ref{thm:1.2}. We refer to the work \cite{BM4}. \\
Recall the hypothesis that $\Om\in C^2$. In \cite{BM4} a distinction is made between the cases $1\le \Lam_1<2$ and $\Lam_1\ge 2$. This is not required here.

\vsp

\subsection{Eigenvalue Problem}

 We show here that $\lam$ used in (\ref{thm1.2-6}) is bounded, see [\cite{BM4}: (1.10), Section 1]. Let $k\ge 1$ and $\dl>0$. Let the differential operator $H$ satisfy conditions A, B and C. Consider the problem of the existence of   a pair $\lam\in \IR$ and $u>0$ satisfying
\be\label{A.2-1}
H(Du, D^2u)+\lam u^k=0,\;\;\mbox{in $\Om$, and $u=\dl$ on $\p\Om$.}
\ee
 Define
$S=\{\lam\; :\; \mbox{Problem (\ref{A.2-1}) has a positive solution $u$} \}.$\\ 
It is shown in [\cite{BM4}: Theorem 1.5, Sections 1 and 8] that $S$ is an interval, and
$\lamo=\sup S<\infty.$\\
 This is shown in [\cite{BM4}: Theorem 1.7, Sections 1 and 9]. The proof uses domain monotonicity of $\lamo$. This is shown in [\cite{BM4}: Lemma 8.2, Section 8].

\NI Department of Mathematics, Western Kentucky University, Bowling Green, KY 42101, USA\\
\NI Dept. of Mathematics \& Computer Science, Rutgers University, Newark, NJ 07103, USA


\begin{thebibliography}{99}

\bibitem{AJK} G. Akagi, P. Juutinen and R. Kajikiya, \it Asymptotic behavior of viscosity solutions for a degenerate parabolic equation associated with the infinity-Laplacian, 
\rm Math.Ann. 343 (2009), no 4, 921-953.

\bibitem{BM1} T. Bhattacharya and L. Marazzi, {\it On the viscosity solution to a parabolic equation,} Annali di Matematica Pura ed Applicata, vol 194, no 5, 2014.
DOI:10.1007/s10231-014-0427-1

\bibitem{BM2} T. Bhattacharya and L. Marazzi, {\em On the viscosity solutions to Trudinger's equation},
Nonlinear Differential equations and applications (NoDEA), vol 22, no 5, 2015.
DOI:10.1007/s00030-015-0315-4

\bibitem{BM20} T. Bhattacharya and L. Marazzi, {\em Erratum to: On the viscosity solution to Trudinger's equation,} Nonlinear Differential equations and applications (NoDEA), vol 23, no 68, 2016.

\bibitem{BM3} T. Bhattacharya and L. Marazzi, {\em Asymptotics of viscosity solutions of some doubly nonlinear parabolic eqns.} J. of Evol. Eqns.  DOI:10.1007/s00026-015-0319-x

\bibitem{BM4} T. Bhattacharya and L. Marazzi, {\it On the viscosity solutions of eigenvalue problems for a class of nonlinear elliptic equations.} Advances in Calculus of Variations, vol 12, issue 4, 2019, 393-421.

\bibitem{BM5} T. Bhattacharya and L. Marazzi, {\it On the viscosity solution to a class of nonlinear degenerate parabolic differential equations.} Revista Mathematica Complutense 30, 621-656(2017).

\bibitem{CIL} M. G. Crandall, H. Ishii and P. L. Lions,\it User's guide to viscosity solutions of second order partial differential equations,\rm Bull. Amer. Math. Soc. 27(1992) 1-67.

\bibitem{ED0} E. DiBenedetto, {\it Degenerate Parabolic Equations} (Universitext), Springerl Verlag 1993

\bibitem{ED} E. DiBenedetto, {\it Partial Differential Equations}: Second Edition (Cornerstones), Birkhauser 2009

\bibitem{EDUV} E. DiBenedetto, U. Gianazza and V. Vespri, {\it Harnack's Inequality for Degenerate and Singular Parabolic Equations}, 2010 Monograph Preprint.

\bibitem{JL}  P. Juutinen and P. Lindqvist, \it Pointwise decay for the solutions of degenerate and singular parabolic equations, \rm Adv. Differential Equations 14(2009), no. 7-8, 663-684.

 \bibitem{LSU} O. A. Ladyzenskajia, N. A. Solonnikov and N. N. Ural'tzeva, 
{\it Linear and Quasilinear Equations of Parabolic Type,} Translations of Mathematical Monographs, 23, AMS,1967 

\bibitem{L} E. M. Landis, {\it Second Order Equations of Elliptic and Parabolic Type}, Translations of Mathematical Monographs 1998, AMS. 


\bibitem{TR} N. S. Trudinger, {\it Pointwise estimates and quasilinear parabolic equations,} Comm. Pure Appl. Math. 21, 205-226 (1968) 

\bibitem{Tych} A. Tychonoff,  {\em Theoremes d'unicite pour l'equation de la chaleur,}
Mat. Sb., 1935, Volume 42, Number 2, 199--216.

\end{thebibliography}
\end{document}